\theoremstyle{plain}
\newtheorem{thm}{Theorem}[section]
\newtheorem{lem}[thm]{Lemma}
\newtheorem{prop}[thm]{Proposition}
\newtheorem{cor}[thm]{Corollary}
\theoremstyle{definition}
\newtheorem{dfn}[thm]{Definition}
\theoremstyle{remark}
\newtheorem{rmk}[thm]{Remark}
\DeclareMathOperator{\Div}{\mathrm{Div}}
\DeclareMathOperator{\divf}{\mathrm{div}}
\DeclareMathOperator{\RDiv}{\mathbb{R}\mathrm{Div}}
\DeclareMathOperator{\DivPlus}{\mathrm{Div}_+}
\DeclareMathOperator{\DivPlusD}{\mathrm{Div}_+^d}
\DeclareMathOperator{\RDivPlus}{\mathbb{R}\mathrm{Div}_+}
\DeclareMathOperator{\RDivPlusD}{\mathbb{R}\mathrm{Div}_+^d}
\DeclareMathOperator{\mR}{\mathbb{R}}
\DeclareMathOperator{\mZ}{\mathbb{Z}}
\DeclareMathOperator{\normalize}{\mathcal{N}}
\DeclareMathOperator{\continuous}{\mathcal{C}(\Gamma)}
\DeclareMathOperator{\CPA}{\mathrm{CPA}(\Gamma)}
\DeclareMathOperator{\BDV}{\mathrm{BDV}(\Gamma)}
\DeclareMathOperator{\measure}{\mathrm{Meas}^0(\Gamma)}
\DeclareMathOperator{\dist}{\mathrm{dist}_\Gamma}
\DeclareMathOperator{\imag}{\mathrm{imag}}
\DeclareMathOperator{\tconv}{\mathrm{tconv}}
\DeclareMathOperator{\Gmin}{\Gamma_{\mathrm{min}}}
\DeclareMathOperator{\Gmax}{\Gamma_{\mathrm{max}}}
\DeclareMathOperator{\supp}{\mathrm{supp}}
\DeclareMathOperator{\Sfunc}{\mathcal{B}}
\title{Tropical Convexity and Canonical Projections}
\author{Ye Luo}
\address{School of Mathematics, Georgia Institute of Technology, Atlanta, GA 30332}
\email{luoye@math.gatech.edu}
\subjclass[2000]{05C38, 14H99}
\keywords{Metric graph, Tropical curves, Linear systems, Tropical convexity, Canonical projections, General reduced divisors}
\date{}
\begin{document}
\maketitle

\begin{abstract}
\noindent
Using a potential theory on metric graphs $\Gamma$, we introduce the notion of tropical convexity to the space $\RDivPlusD(\Gamma)$ of effective $\mR$-divisors of degree $d$ on $\Gamma$ and show that a natural metric can be defined on $\RDivPlusD(\Gamma)$.
In addition, we extend the notion of reduced divisors which is conventionally defined in a complete linear system $|D|$ with respect to a single point in $\Gamma$. In our general setting, a reduced divisor is defined uniquely as an $\mR$-divisor in a compact tropical convex set $T\subset\RDivPlusD(\Gamma)$ with respect to a certain $\mR$-divisor $E$ of the same degree $d$. In this sense, we consider reduced divisors as canonical projections onto $T$. We also investigate some basic properties of tropical convex sets using techniques developed from general reduced divisors.
\end{abstract}

\section{Introduction}

\subsection{Notations and terminologies}
Let $\Gamma$ be a compact metric graph with finite edge lengths. For simplicity, we also denote the set of points of $\Gamma$ by $\Gamma$. Let $\Div(\Gamma)$ be the free abelian group on $\Gamma$. Let $\RDiv(\Gamma) = \Div(\Gamma)\otimes\mR$. As in convention, we call the elements of $\Div(\Gamma)$ \emph{divisors} (or \emph{$\mZ$-divisors} when we want to emphasize the integer coefficients), and elements of $\RDiv(\Gamma)$ \emph{$\mR$-divisors}. In cases of no confusion, we may also call $\mR$-divisors just as divisors throughout this paper. Let $\DivPlus(\Gamma)$ and $\RDivPlus(\Gamma)$ be the semigroups of effective $\mZ$-divisors and effective $\mR$-divisors respectively. If $d$ is a nonnegative integer, denote the set of effective divisors of degree $d$ by $\DivPlusD(\Gamma)$. If $d$ is a nonnegative real, denote the set of effective $\mR$-divisors of degree $d$ by $\RDivPlusD(\Gamma)$.

For a continuous function $f$ on $\Gamma$. Let $\normalize(f)  = f-\min f$. Let $\Gmin(f):=f^{-1}(\min{f})=\{v\in\Gamma|f(v)=\min{f}\}$ and $\Gmax(f):=f^{-1}(\max{f})=\{v\in\Gamma|f(v)=\max{f}\}$. In other words, $\Gmin(f)$ and $\Gmax(f)$ are the minimizer and maximizer of $f$ respectively.

\subsection{Overview}
Since Baker and Norine found a graph-theoretic analogue of the famous Riemann-Roch theorem on algebraic curves in their ground-breaking paper \cite{BN07}, such RR-type theorems have been extended to other combinatorial and geometric settings, such as weighted graphs \cite{AC13}, metric graphs, tropical curves, \cite{GK07,MZ07}, lattices \cite{AM10}, and finite sets \cite{JM12}.

The notion of reduced divisors is a main tool used by Baker and Norine to prove the graph-theoretical Riemann-Roch theorem and it appears in subsequent works by different authors \cite{A12,CDPR10,HKN07,L11}. The notion of reduced divisors (under different names) and a related notion of chip-firing games were originally introduced in a self-organized sandpile model on grids and then on arbitrary graphs \cite{D90}, and have aroused interest in various fields of research (see the short survey article \cite{LP10}) including combinatorics, theoretical physics, and arithmetic geometry. In the context of metric graphs $\Gamma$, reduced divisors arise in the following way: for a complete linear system $|D|$ (a linearly equivalent component of $\DivPlus(\Gamma)$) and a point $q\in\Gamma$, there exists a canonically defined divisor $D_0\in|D|$ which is \emph{``reduced''} with respect to $q$.

There are several equivalent ways \cite{MZ07,PS04} to characterize reduced divisors. Recently, Baker and Shokrieh made its connection to potential theory on (metric) graphs. The main tool in their theory is the \emph{energy pairing}, and for a fixed $q\in\Gamma$, it can be used to define two functions on the divisor group, the energy function $\mathcal{E}_q$ and the $b$-function $b_q$. Then the reduced divisor in $|D|$ with respect to $q$ is the minimizer of either $\mathcal{E}_q$ or $b_q$. In this paper, we are particularly interested in $b$-functions and have made an extension in our settings. 

In \cite{HMY12}, the authors studied the linear systems using the conventional theory of tropical convexity \cite{DS04}. In this sense, complete linear systems are tropically convex. In this paper, we have also generalized the notion of tropical convexity. More specifically, we have developed a geometric foundation for the notion of tropical convexity in the space of all $\mR$-divisors. In particular, we have found a canonical metric structure on the space of divisors, which can be used to study the topology and geometry on it. The notion of tropical convexity is intrinsically built on this metric structure. In this sense, the linear systems $|D|$ are tropical-path-connected components of $\DivPlus(\Gamma)$.

With our extended notions of $b$-functions and tropical convexity, we are able to generalize the notion of reduced divisors in the following sense:
\begin{enumerate}
  \item Reduced divisors exist not only just for complete linear systems $|D|$ but also for any compact tropically convex subset of $\RDivPlusD(\Gamma)$ with a given $d$.
  \item Reduced divisors can be defined not only with respect to a point $p$ on the metric graph but also any divisor $E\in\RDivPlus(\Gamma)$.
\end{enumerate}

Using general reduced divisors, we further develop tools to investigate some basic properties of tropical convexity, e.g., the contractibility and compactness of tropical convex hulls. In addition, tropical projection maps are canonically derived from general reduced divisors.

The paper is structured as follows. The potential theory on metric graphs is briefly reviewed in Section~\ref{S:potential}. We then define a metric structure on $\RDivPlusD(\Gamma)$ and study the induced topology in Section~\ref{S:metricstructure}. Our settings of tropical convexity are discussed in Section~\ref{S:tconvset}, where we also make statements of some basic properties of tropical convex sets. We introduce the notion of general reduced divisors and provide several criterions in Section~\ref{S:GRD}. Then we investigated several particular cases about general reduced divisors on tropical segments and develop some useful tools in Section~\ref{S:RD_tseg}. As an application of these tools, the theorems about the basic properties of tropical convex sets (stated in Section~\ref{S:tconvset}) are proved in Section~\ref{S:RD_tconv}. Finally, we discuss canonical projections in Section~\ref{S:Canproj}.

\section{Potential theory on metric graphs} \label{S:potential}
We list here some standard terminologies and basic facts concerning potential theory on metric graphs. The reader may refer \cite{BF06,BR07} for details.

For a metric graph $\Gamma$, we let $\continuous$ be the $\mR$-algebra of continuous real-valued functions on $\Gamma$, and let $\CPA\subset\continuous$ be the vector space consisting of all continuous piecewise-affine (or piecewise-linear) functions on $\Gamma$. Note that $\CPA$ is dense in $\continuous$. Let $\measure$ be the vector space of finite signed Borel measures of total mass zero on $\Gamma$. Denote by $\mR\in\continuous$ the space of constant functions on $\Gamma$.

In terms of electric network theory, we may think of $\Gamma$ as an electrical network with resistances given by the edge lengths. For $p,q,x\in \Gamma$, we define a $j$-function $j_q(x,p)$ as the potential at $x$ when one unit of current enters the network at $p$ and exits at $q$ with $q$ grounded (potential $0$).

We have the following properties of the $j$-function.
\begin{enumerate}
\item $j_q(x,p)$ is jointly continuous in $p$, $q$ and $x$.
\item $j_q(x,p)\in\CPA$.
\item $j_q(q,p)=0$.
\item $0\leqslant j_q(x,p) \leqslant j_q(p,p)$.
\item $j_q(x,p)=j_q(p,x)$.
\item $j_q(x,p)+j_p(x,q)$ is constant for all $x\in\Gamma$. Denoted by $r(p,q)$, this constant is the effective resistance between $p$ and $q$.
\item $r(p,q)=j_q(p,p)=j_p(q,q)$.
\item $r(p,q)\leqslant\dist(p,q)$ where $\dist(p,q)$ is the distance between $p$ and $q$ on $\Gamma$.
\item $\frac{r(p,q)}{\dist(p,q)}\rightarrow 1$ as $\dist(p,q)\rightarrow 0$.
\end{enumerate}

Let $\BDV$ be the vector space of functions of bounded differential variation \cite{BR07}. Then we have $\CPA\subset\BDV\subset\continuous$.

The Laplacian $\Delta:\BDV\rightarrow\measure$ is defined as an operator in the following sense.
\begin{enumerate}
\item $\Delta$ induces an isomorphism between $\BDV/\mR$ and $\measure$ as vector spaces.
\item For $f\in\CPA$, we have $$\Delta f=\sum_{p\in\Gamma}\sigma_p(f)\delta_p$$ where $-\sigma_p(f)$ is the sum of the slopes of $f$ in all tangent directions emanating from $p$ and $\delta_p$ is the Dirac measure (unit point mass) at $p$. In particular, $\Delta j_q(x,p)=\delta_p(x)-\delta_q(x)$.
\item An inverse to $\Delta$ is given by $$\nu\mapsto\int_{\Gamma}j_q(x,y)d\nu(y)\in\{f\in\BDV:f(q)=0\}.$$
\end{enumerate}

\section{A metric structure defined on $\RDivPlusD(\Gamma)$} \label{S:metricstructure}
If $D=\sum_{p\in\Gamma}m_p\cdot(p)\in\RDiv$, we let $\delta_D:=\sum_{p\in\Gamma}m_p\cdot\delta_p$ with $\delta_p$ the Dirac measure at $p$.
Let $D_1,D_2\in\RDivPlusD(\Gamma)$. Then based on the potential theory on $\Gamma$, there exist a piecewise-linear function $f_{D_2-D_1}\in\CPA$ on $\Gamma$ such that $\Delta f_{D_2-D_1} = \delta_{D_2}-\delta_{D_1}$. Note that any two such associated functions differ in a constant. In this sense, we say $\divf(f):=D_2-D_1$ is the \emph{associated divisor} of $f_{D_2-D_1}$, and correspondingly $f_{D_2-D_1}$ is an \emph{associated function} of $D_2-D_1$. Then $\normalize(f_{D_2-D_1})$ has minimum $0$ and is unique with $D_1$ and $D_2$ provided.

More precisely, if $D_1=(q)$ and $D_2=(p)$ for some $p,q\in\Gamma$, then $\normalize(f_{D_2-D_1})(x)=j_q(x,p)$.
Now let $D_1=\sum_{i=1}^{d_1} m_{1,i}\cdot(p_{1,i})$ and $D_2=\sum_{i=1}^{d_2} m_{2,i}\cdot(p_{2,i})$ such that $D_1,D_2\in\RDivPlusD(\Gamma)$ (this means $d=\sum_{i=1}^{d_1} m_{1,i}=\sum_{i=1}^{d_2} m_{2,i}$). Then by the linearity of the Laplacian, for an arbitrary $q\in\Gamma$, $$\sum_{i=1}^{d_1}m_{1,i}\cdot j_q(x,p_{1,i})-\sum_{i=1}^{d_2}m_{2,i}\cdot j_q(x,p_{2,i})$$ is an associated function of $D_2-D_1$ .

Define the \emph{distance function} $$\rho(D_1,D_2) := \max(f_{D_2-D_1})-\min(f_{D_2-D_1}) = \max(\normalize(f_{D_2-D_1})).$$ Immediately, we get $\rho(D_1,D_2)=0$ if and only if $D_1=D_2$. Furthermore, note that $$\normalize(f_{D_3-D_1})=\normalize(f_{D_2-D_1}+f_{D_3-D_2}).$$ By the linearity of the Laplacian, we get the triangle inequality $$\rho(D_1,D_3)\leqslant\rho(D_1,D_2)+\rho(D_2,D_3)$$ since $$\normalize(f_{D_2-D_1}+f_{D_3-D_2})\leqslant\normalize(f_{D_2-D_1})+\normalize(f_{D_3-D_2}),$$ while the equalities hold if and only if
$$\Gmin(f_{D_2-D_1})\bigcap\Gmin(f_{D_3-D_2})\neq\emptyset$$ and $$\Gmax(f_{D_2-D_1})\bigcap\Gmax(f_{D_3-D_2})\neq\emptyset.$$
Thus $\rho$ is well-defined as a metric on $\RDivPlusD(\Gamma)$.

Still, we let $D_1,D_2\in\RDivPlusD(\Gamma)$. Let $D_1=D_{1,1}+D_{1,2}$ and $D_2=D_{2,1}+D_{2,2}$. Here we suppose $D_{1,1}$ and $D_{2,1}$ are effective divisors of the same degree $d_1$, and $D_{1,2}$ and $D_{2,2}$ are effective divisors of the same degree $d_2$. By the linearity of the Laplacian, we get $$\normalize(f_{D_2-D_1})=\normalize(f_{D_{2,1}-D_{1,1}}+f_{D_{2,2}-D_{1,2}})$$ and $$\rho(D_1,D_2)\leqslant\rho(D_{1,1},D_{2,1})+\rho(D_{1,2},D_{2,2}),$$ since $$D_2-D_1 = (D_{2,1}-D_{1,1})+(D_{2,2}-D_{1,2}).$$

The \emph{tropical path} (or \emph{t-path}) from $D_1$ to $D_2$ in $\RDivPlusD(\Gamma)$ is a map  $P_{D_2-D_1}:[0,1]\rightarrow \RDivPlusD(\Gamma)$ given by $$P_{D_2-D_1}(t)=\Delta \min(t\cdot\rho(D_1,D_2),\normalize(f_{D_2-D_1}))+D_1.$$ In particular, $P_{D_2-D_1}(0)=D_1$ and $P_{D_2-D_1}(1) = D_2$.

\begin{rmk}
\begin{enumerate}
\item This map is well-defined since $P_{D_2-D_1}(t)$ lies in $\RDivPlusD(\Gamma)$. In other words, there exists a unique t-path from $D_1$ to $D_2$.
\item If we let $D(t) = P_{D_2-D_1}(t)$, then $$\normalize(f_{D(t)-D_1}) = \min(t\cdot\rho(D_1,D_2),\normalize(f_{D_2-D_1})),$$ and $$\normalize(f_{D_2-D(t)}) = \normalize(\max(t\cdot\rho(D_1,D_2),\normalize(f_{D_2-D_1}))).$$
\item $P_{D_1-D_2}$ is continuous.
\end{enumerate}
\end{rmk}

We call $\imag(P_{D_2-D_1})$ the \emph{tropical segment} (or \emph{t-segment}) connecting $D_1$ and $D_2$. Note that $P_{D_2-D_1}(t)=P_{D_1-D_2}(1-t)$ and therefore  $\imag(P_{D_2-D_1})=\imag(P_{D_1-D_2})$. We say $D_1$ and $D_2$ are the \emph{end points} of the t-segment $\imag(P_{D_2-D_1})$.

Given a function $f$ with domain $[\kappa_1,\kappa_2]$ for some $\kappa_1\leqslant\kappa_2$, we say the function $f\diamond s_\alpha$ is a \emph{linear scaling} of $f$ with $\alpha>0$ the scaling factor such that $f\diamond s_\alpha(t)=f(t/\alpha)$, and the function $f\diamond \tau_\beta$ is a \emph{linear translation} of $f$ with $\beta$ the translation factor such that $f\diamond \tau_\beta(t)=f(t-\beta)$. Then it is clear $f\diamond s_\alpha$ has domain $[\alpha\kappa_1,\alpha\kappa_2]$ and $f\diamond \tau_\beta$ has domain $[\kappa_1+\beta,\kappa_2+\beta]$.

$P_{D_2-D_1}$ is actually an isometry after a linear scaling. We give a basic characterization of $P_{D_2-D_1}$ in the following lemma.


\begin{lem} \label{lem:tpath}
For $D_1,D_2\in\RDivPlusD(\Gamma)$, we have the following fundamental properties of the t-path $P_{D_2-D_1}$.
\begin{enumerate}
\item For any $D'_1,D'_2\in\imag(P_{D_2-D_1})$, the t-segment $\imag(P_{D'_2-D'_1})$ is a subset of the t-segment $\imag(P_{D_2-D_1})$.
\item Let $\hat{P}_{D_2-D_1}:[0,\rho(D_1,D_2)]\rightarrow \RDivPlusD(\Gamma)$ be given by $\hat{P}_{D_2-D_1}(t)=P_{D_2-D_1}\diamond s_{\rho(D_1,D_2)}$ if $D_1\neq D_2$ and $\hat{P}_{D_2-D_1}(0)=D_1$ if $D_1=D_2$. Then $\hat{P}_{D_2-D_1}$ is an isometry from $[0,\rho(D_1,D_2)]$ to $\imag(P_{D_2-D_1})$.
\item The t-segment $\imag(P_{D_2-D_1})$ is compact and thus a closed subset of $\RDivPlusD(\Gamma)$.
\end{enumerate}
\end{lem}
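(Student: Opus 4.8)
The plan is to run every computation through the single function $g := \normalize(f_{D_2-D_1})$, which satisfies $\min g = 0$, $\max g = \rho(D_1,D_2) =: R$, and $\Delta g = \delta_{D_2}-\delta_{D_1}$. Writing $\phi_t := \min(tR,g)$ and $D(t) := P_{D_2-D_1}(t)$, the definition of the t-path reads $\delta_{D(t)} = \Delta\phi_t + \delta_{D_1}$. The first fact I would record is that for $s\le t$ in $[0,1]$ we have $\delta_{D(t)}-\delta_{D(s)} = \Delta(\phi_t-\phi_s)$, so $\phi_t-\phi_s$ is an associated function of $D(t)-D(s)$. This reduces the whole lemma to pointwise manipulations of truncations $\min(tR,g)$ of the one function $g$.

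For part (1), I would write $D'_1 = D(t_1)$ and $D'_2 = D(t_2)$ with $t_1\le t_2$ and prove the stronger statement that the t-path between them is an affinely reparametrized sub-arc, namely $P_{D(t_2)-D(t_1)}(u) = D(t_1+u(t_2-t_1))$ for $u\in[0,1]$; then $\imag(P_{D(t_2)-D(t_1)}) = \{D(\tau):\tau\in[t_1,t_2]\}\subseteq\imag(P_{D_2-D_1})$ is immediate. Setting $R' := (t_2-t_1)R$ and $\psi := \phi_{t_2}-\phi_{t_1}$, the right-hand path equals $\Delta\bigl[\min(uR',\psi)+\phi_{t_1}\bigr]+D_1$, so with $\tau := t_1+u(t_2-t_1)$ it suffices to verify the pointwise identity
$$\min(uR',\psi)+\phi_{t_1} = \min(\tau R, g).$$
I expect this identity to be the main obstacle, though it is pure bookkeeping: expanding the left side as $\min\bigl(\tau R,\ (\tau-t_1)R+g,\ g\bigr)$ and using $t_1\le\tau\le t_2$ with $R\ge 0$ shows the middle term is dominated by $g$ and collapses the minimum to $\min(\tau R,g)=\phi_\tau$. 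One must also check $\min\psi = 0$ and $\max\psi = R'$ (so that the scaling factor and normalized function used to form $P_{D(t_2)-D(t_1)}$ are correct); these follow by evaluating $\psi$ on $\Gmin(g)$ and $\Gmax(g)$ and noting $0\le\psi\le R'$ pointwise.

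For part (2), the same difference formula makes $\psi = \phi_t-\phi_s$ an associated function of $D(t)-D(s)$, so $\rho(D(s),D(t)) = \max\psi-\min\psi$. Since $\phi_t-\phi_s$ vanishes on $\Gmin(g)$ (where $g=0\le sR$), equals $(t-s)R$ on $\Gmax(g)$ (where $g=R\ge tR$), and takes no values outside $[0,(t-s)R]$, I obtain $\rho(D(s),D(t)) = (t-s)R$. In the arclength parametrization this is exactly $\rho\bigl(\hat{P}_{D_2-D_1}(\sigma),\hat{P}_{D_2-D_1}(\sigma')\bigr) = |\sigma-\sigma'|$, so $\hat{P}_{D_2-D_1}$ is an isometry onto its image; it is injective because it is an isometry and surjective by construction.

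Finally, part (3) needs no further divisor theory: $P_{D_2-D_1}$ is continuous by Remark~(3) and $[0,1]$ is compact, so $\imag(P_{D_2-D_1})$ is the continuous image of a compact set and hence compact; since $\RDivPlusD(\Gamma)$ is a metric space and thus Hausdorff, this compact subset is closed.
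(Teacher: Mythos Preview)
Your proposal is correct and follows essentially the same route as the paper's proof: both write $D'_1=D(t_1)$, $D'_2=D(t_2)$ with $t_1\le t_2$, identify the normalized associated function of $D(t_2)-D(t_1)$ as a truncation of $g=\normalize(f_{D_2-D_1})$ (the paper writes it as $\normalize(\max(t_1R,\min(t_2R,g)))$, which is exactly your $\psi=\phi_{t_2}-\phi_{t_1}$), and read off $\imag(P_{D'_2-D'_1})=P_{D_2-D_1}([t_1,t_2])$ and $\rho(D'_1,D'_2)=(t_2-t_1)R$, with (3) following from continuity and compactness of $[0,1]$. Your version is more explicit about the pointwise identity and the checks $\min\psi=0$, $\max\psi=R'$, but the underlying argument is the same.
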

\begin{proof}
We may write uniquely $D'_1=P_{D_2-D_1}(t_1)$ and $D'_2=P_{D_2-D_1}(t_2)$ where $t_1,t_2\in[0,1]$. Switching the positions of $D'_1$ and $D'_2$ if necessary, we may assume $t_1\leqslant t_2$.
Then $$\normalize(f_{D'_2-D'_1}) = \normalize(\max(t_1\cdot\rho(D_1,D_2),\min(t_2\cdot\rho(D_1,D_2),\normalize(f_{D_2-D_1})))).$$
Thus we have $\imag(P_{D'_2-D'_1})=P_{D_2-D_1}([t_1,t_2])\subseteq\imag(P_{D_2-D_1})$ (for statement (1)) and $\rho(D'_1,D'_2)=(t_2-t_1)\cdot\rho(D_1,D_2)$ (for statement (2)).

The compactness of $\imag(P_{D_2-D_1})$ follows from the compactness of $[0,1]$ and the continuity of $P_{D_2-D_1}$.
\end{proof}

\begin{cor} \label{cor:tseg_intersect}
The intersection of two t-segments in $\RDivPlusD(\Gamma)$ is again a t-segment in $\RDivPlusD(\Gamma)$.
\end{cor}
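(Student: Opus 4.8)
The plan is to reduce the statement to the elementary fact that a closed convex subset of a real interval is again a closed subinterval, by transporting both t-segments through the isometric parametrization furnished by Lemma~\ref{lem:tpath}. Write the two given t-segments as $S_1=\imag(P_{D_2-D_1})$ and $S_2=\imag(P_{D_4-D_3})$, and set $I=S_1\cap S_2$. If $I$ is empty the intersection is degenerate and nothing needs to be shown, so I focus on the case $I\neq\emptyset$; a single-point intersection will emerge as a degenerate t-segment (the case $D_1=D_2$) at the end.

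The first and central step is to observe that $I$ is closed under passing to t-segments between its own members. Indeed, if $A,B\in I$ then in particular $A,B\in S_1$, so Lemma~\ref{lem:tpath}(1) gives $\imag(P_{B-A})\subseteq S_1$; the same reasoning with $S_2$ gives $\imag(P_{B-A})\subseteq S_2$. Hence $\imag(P_{B-A})\subseteq S_1\cap S_2=I$. This ``t-convexity'' of the intersection is what makes the whole argument go through.

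I would then transport $I$ to a parameter interval. By Lemma~\ref{lem:tpath}(2) the map $\hat{P}_{D_2-D_1}$ is an isometry, hence a homeomorphism, from $[0,L]$ onto $S_1$, where $L=\rho(D_1,D_2)$; put $K=\hat{P}_{D_2-D_1}^{-1}(I)\subseteq[0,L]$. The claim is that $K$ is a closed interval. Closedness is immediate, since by Lemma~\ref{lem:tpath}(3) both $S_1$ and $S_2$ are closed, so $I$ is closed in $S_1$ and therefore $K$ is closed in $[0,L]$. For convexity, take $a\leqslant b$ in $K$ and set $A=\hat{P}_{D_2-D_1}(a)$, $B=\hat{P}_{D_2-D_1}(b)$. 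Writing $A=P_{D_2-D_1}(a/L)$ and $B=P_{D_2-D_1}(b/L)$, the computation in the proof of Lemma~\ref{lem:tpath}(1) identifies $\imag(P_{B-A})$ with $P_{D_2-D_1}([a/L,b/L])=\hat{P}_{D_2-D_1}([a,b])$. Since $A,B\in I$, t-convexity forces $\hat{P}_{D_2-D_1}([a,b])\subseteq I$, so $[a,b]\subseteq K$; thus $K$ is convex.

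A closed convex subset of $[0,L]$ is a closed subinterval $[\alpha,\beta]$, so $I=\hat{P}_{D_2-D_1}([\alpha,\beta])$. Setting $A_0=\hat{P}_{D_2-D_1}(\alpha)$ and $B_0=\hat{P}_{D_2-D_1}(\beta)$ and applying Lemma~\ref{lem:tpath}(1) a final time identifies this arc with $\imag(P_{B_0-A_0})$, which is exactly a t-segment (degenerating to a single point when $\alpha=\beta$). The only step demanding genuine care is the identification $\imag(P_{B-A})=\hat{P}_{D_2-D_1}([a,b])$ --- that the t-segment joining two points of $S_1$ coincides with the corresponding parameter subinterval --- but this is precisely the content of the proof of Lemma~\ref{lem:tpath}(1), so no new estimate is required and the apparent difficulty is only bookkeeping.
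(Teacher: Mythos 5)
Your proof is correct and follows essentially the same route as the paper's: both use Lemma~\ref{lem:tpath}(1) to show the intersection is closed under taking t-segments between its points and Lemma~\ref{lem:tpath}(3) to rule out missing endpoints. Your version merely makes the paper's terse phrase ``a t-segment without one or both of the end points'' rigorous by pulling back through the isometric parametrization of Lemma~\ref{lem:tpath}(2), which is a welcome but not essentially different elaboration.
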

\begin{proof}
Let $T_1$ and $T_2$ be two t-segments in $\RDivPlusD(\Gamma)$ with $T$ being their intersection. Then by Lemma~\ref{lem:tpath}~(1), if $T$ contains two divisors $D_1$ and $D_2$, then it must contain the whole t-segment connecting $D_1$ and $D_2$. This actually means that $T$ is either a t-segment itself or a t-segment without one or both of the end points. But $T$ must also be a compact closed subset of $\RDivPlusD(\Gamma)$ by Lemma~\ref{lem:tpath}~(3). Thus $T$ is a t-segment itself.
\end{proof}

\begin{rmk} \label{rmk:Gminmax}
Suppose $D_1\neq D_2$ and we have the t-path $P_{D_2-D_1}$ from $D_1$ to $D_2$ with an associated function $f_{D_2-D_1}$. In particular, we may assume $f_{D_2-D_1}=\normalize(f_{D_2-D_1})$. To simplify notation, we let $D(t) = P_{D_2-D_1}(t)$ and $l=\rho(D_1,D_2)$. Then it is easy to see that
\begin{enumerate}
    \item $\Gmin(f_{D(t)-D_1})=\Gamma$ for $t=0$, and $\Gmin(f_{D(t)-D_1})=\Gmin(f_{D_2-D_1})$ for $t\in(0,1]$;
\item $\Gmax(f_{D(t)-D_1})=f_{D_2-D_1}^{-1}([tl,l])$ for $t\in[0,1]$, and $\Gmax(f_{D(t)-D_1})$ shrinks as $t$ increases; in addition, $\Gmax(f_{D(t)-D_1})$ shrinks continuously as $t$ increase in $(0,s)$ for some $s$ small enough and  $\lim_{t\searrow0}\Gmax(f_{D(t)-D_1})=\overline{(\Gmin(f_{D_2-D_1})^c}$.
\item $\Gmin(f_{D_2-D(t)})=f_{D_2-D_1}^{-1}([0,tl])$ for $t\in[0,1]$, and $\Gmin(f_{D_2-D(t)})$ expands as $t$ increases; in addition, $\Gmin(f_{D_2-D(t)})$ expands continuously as $t$ increase in $(s',1)$ for some $s'$ big enough and $\lim_{t\nearrow1}\Gmin(f_{D_2-D(t)})=\overline{(\Gmax(f_{D_2-D_1})^c}$.
\item $\Gmax(f_{D_2-D(t)})=\Gamma$ for $t=1$, and $\Gmax(f_{D_2-D(t)})=\Gmax(f_{D_2-D_1})$ for $t\in[0,1)$;
\item $\Gmin(f_{D_2-D_1})\bigcap\supp(D_1)\neq\emptyset$ and $\Gmax(f_{D_2-D_1})\bigcap\supp(D_2)\neq\emptyset$; and
\item Let $X=\Gmax(f_{D(t)-D_1})$. Let $X^o$, $X^c$ and $\partial X$ be the interior, complement and boundary of $X$, respectively. Then $D(t)|_{X^o}=D_1|_{X^o}$, $D(t)|_{X^c}=D_2|_{X^c}$ and $D(t)|_{\partial X}\geqslant D_2|_{\partial X}$.
\end{enumerate}
\end{rmk}

\begin{lem} \label{lem:clutching}
Let $D,D_1,D_2\in\RDivPlusD(\Gamma)$. Then the following properties are equivalent.
\begin{enumerate}
\item $D\in\imag(P_{D_2-D_1})$.
\item $\imag(P_{D_2-D_1})=\imag(P_{D_1-D})\bigcup\imag(P_{D_2-D})$.
\item $\Gmin(f_{D_1-D})\bigcup\Gmin(f_{D_2-D})=\Gamma$.
\end{enumerate}
\end{lem}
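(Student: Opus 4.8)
The plan is to treat condition (1) as the central one and establish the two equivalences $(1)\Leftrightarrow(2)$ and $(1)\Leftrightarrow(3)$ separately. The equivalence of (1) and (2) is essentially immediate from Lemma~\ref{lem:tpath}. For $(2)\Rightarrow(1)$, simply observe that $D$ is an endpoint of $\imag(P_{D_1-D})$, so $D\in\imag(P_{D_1-D})\subseteq\imag(P_{D_2-D_1})$. For $(1)\Rightarrow(2)$, write $D=P_{D_2-D_1}(t_0)$ with $t_0\in[0,1]$; the computation in the proof of Lemma~\ref{lem:tpath}~(1) identifies $\imag(P_{D_1-D})=P_{D_2-D_1}([0,t_0])$ and $\imag(P_{D_2-D})=P_{D_2-D_1}([t_0,1])$, and their union is $P_{D_2-D_1}([0,1])=\imag(P_{D_2-D_1})$ since $[0,t_0]\cup[t_0,1]=[0,1]$.

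The substance lies in $(1)\Leftrightarrow(3)$. I would introduce $u=\normalize(f_{D_1-D})$ and $v=\normalize(f_{D_2-D})$, each with minimum $0$, so that $\Gmin(f_{D_1-D})=u^{-1}(0)$ and $\Gmin(f_{D_2-D})=v^{-1}(0)$; then (3) is equivalent to the pointwise statement $\min(u,v)\equiv0$ on $\Gamma$. The key link between the two sides is that, by linearity of $\Delta$, one has $f_{D_2-D_1}=v-u$ up to an additive constant, hence $\normalize(f_{D_2-D_1})=\normalize(v-u)$. For $(1)\Rightarrow(3)$, assume $D=P_{D_2-D_1}(t_0)$ and abbreviate $\phi=\normalize(f_{D_2-D_1})$ and $l=\rho(D_1,D_2)$. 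Using $f_{D_1-D}=-f_{D-D_1}$ up to a constant, together with the formulas in the Remark following the definition of the t-path, a short normalization computation gives $u=\max(t_0 l-\phi,0)$ and $v=\max(\phi-t_0 l,0)$. Since $\phi$ takes a single value at each point, exactly one of these is nonzero there (or both vanish), so $\min(u,v)\equiv0$, which is (3).

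For $(3)\Rightarrow(1)$, I would reconstruct the parameter. Assuming $\min(u,v)\equiv0$, evaluating $v-u$ separately on $u^{-1}(0)$ and on $v^{-1}(0)$ shows $\min(v-u)=-\max u$ and $\max(v-u)=\max v$, whence $\normalize(v-u)=(v-u)+\max u$ and $l=\max u+\max v=\rho(D_1,D)+\rho(D,D_2)$. The degenerate case $l=0$ forces $u\equiv v\equiv0$, hence $D=D_1=D_2$, and (1) is trivial; otherwise set $t_0=\max u/l\in[0,1]$. The elementary identity $\max(u-v,0)=u$, again a consequence of $\min(u,v)\equiv0$, then yields $\normalize(f_{P_{D_2-D_1}(t_0)-D_1})=\min(t_0 l,\phi)=\max u-u=\normalize(f_{D-D_1})$. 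Equality of these two normalized associated functions forces equal Laplacians, so $\delta_{P_{D_2-D_1}(t_0)}=\delta_D$, giving $P_{D_2-D_1}(t_0)=D$ and thus (1).

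The main obstacle I anticipate is the bookkeeping in $(3)\Rightarrow(1)$: correctly tracking the additive normalization constants and the sign conventions that distinguish $f_{D_1-D}$ from $f_{D-D_1}$ (so that $\Gmin$ of one is $\Gmax$ of the other), and verifying the two identities $\normalize(v-u)=(v-u)+\max u$ and $\max(u-v,0)=u$ that hold precisely because $\min(u,v)\equiv0$. Everything else reduces to the pointwise algebra of minima and maxima already exploited in the Remark, so once these identities are in place the reconstruction of $D$ as $P_{D_2-D_1}(t_0)$ is direct.
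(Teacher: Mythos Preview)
Your proposal is correct and follows essentially the same idea as the paper's proof, though you spell out in full what the paper only sketches. The paper organizes the argument as $(1)\Leftrightarrow(2)$ (``straightforward from the definition of the tropical paths'') and $(2)\Leftrightarrow(3)$ (via the identities $\Gmin(f_{D_1-D})=\Gmax(f_{D-D_1})$ and the fact that $f_{D_2-D}+f_{D-D_1}$ is an associated function of $D_2-D_1$), whereas you establish $(1)\Leftrightarrow(2)$ and $(1)\Leftrightarrow(3)$; but the underlying content is the same computation with the normalized associated functions, and your explicit formulas $u=\max(t_0 l-\phi,0)$, $v=\max(\phi-t_0 l,0)$ together with the reconstruction $t_0=\max u/l$ make transparent exactly what the paper's two hinted facts are meant to accomplish.
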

\begin{proof}
The equivalence of (1) and (2) is straightforward from the definition of the tropical paths.
The equivalence of (2) and (3) follows from the facts that $\Gmin(f_{D_1-D})=\Gmax(f_{D-D_1})$ and $f_{D_2-D}+f_{D-D_1}$ is an associated function of $D_2-D_1$.
\end{proof}

\begin{rmk}
One should be careful that $\rho(D_1,D_2) = \rho(D_1,D)+\rho(D_2,D)$ does not guarantee that $D$ lies in the t-segment connecting $D_1$ and $D_2$.
\end{rmk}

Recall that Corollary~\ref{cor:tseg_intersect} says we will get a t-segment by intersecting two t-segments. The following corollary tells us that if glued properly, the union of two t-segments will also be a t-segment.
\begin{cor}
For $0\leqslant t_1 <t_2\leqslant1$, let $\Lambda:[0,1]\rightarrow \RDivPlusD(\Gamma)$ be a map such that $\Lambda|_{[0,t_2]}\diamond s_{\frac{1}{t_2}}$ is the t-path from $\Lambda(0)$ to $\Lambda(t_2)$ and $\Lambda|_{[t_1,1]}\diamond \tau_{-t_1}\diamond s_{\frac{1}{1-t_1}}$ is the t-path from $\Lambda(t_1)$ to $\Lambda(1)$. Then $\Lambda$ is the t-path from $\Lambda(0)$ to $\Lambda(1)$.
\end{cor}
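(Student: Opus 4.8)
The plan is to show that $\Lambda$ coincides with the unique t-path $P_{\Lambda(1)-\Lambda(0)}$, first by gluing the two given t-segments into a single one and then by checking that the two parametrizations agree. Throughout write $A=\Lambda(0)$, $D=\Lambda(t_1)$, $C=\Lambda(t_2)$, $B=\Lambda(1)$; unwinding the scaling and translation factors, the hypotheses say precisely that $\Lambda(s)=P_{C-A}(s/t_2)$ for $s\in[0,t_2]$ and $\Lambda(s)=P_{B-D}\bigl((s-t_1)/(1-t_1)\bigr)$ for $s\in[t_1,1]$. If $t_1=0$ or $t_2=1$ the conclusion is immediate, since the corresponding scaling/translation is the identity and one hypothesis already asserts the claim; so I may assume $0<t_1<t_2<1$, whence $u_C:=(t_2-t_1)/(1-t_1)\in(0,1)$ and $C$ is an interior point of the second t-path. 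Set $h_1:=\normalize(f_{C-A})$, $h_2:=\normalize(f_{B-D})$, $L_1:=\rho(A,C)=\max h_1$, and $L_2:=\rho(D,B)=\max h_2$.

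The first step records a speed-matching identity. Since $\Lambda|_{[t_1,t_2]}$ is a common sub-path of both t-paths, Lemma~\ref{lem:tpath}(2) computes $\rho(D,C)$ in two ways: from the first chunk it equals $\frac{t_2-t_1}{t_2}L_1$, from the second it equals $u_CL_2=\frac{t_2-t_1}{1-t_1}L_2$. Equating gives $L_1/t_2=L_2/(1-t_1)=:r$, the common rate. Moreover, by Lemma~\ref{lem:tpath}(1) and the formula in its proof, the normalized associated function of $C-D$ admits the two descriptions $\normalize(\max((t_1/t_2)L_1,h_1))$ and $\normalize(\min(u_CL_2,h_2))$; by uniqueness of the normalized associated function these CPA functions are equal, which is the key link between $h_1$ and $h_2$.

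The heart of the argument, and the step I expect to be the main obstacle, is to glue the segments via the clutching Lemma~\ref{lem:clutching} applied with endpoints $A,B$ and middle divisor $C$. I must verify criterion (3): $\Gmin(f_{A-C})\cup\Gmin(f_{B-C})=\Gamma$. Here $\Gmin(f_{A-C})=\Gmax(h_1)=\{h_1=L_1\}$ since negation swaps min and max, while Remark~\ref{rmk:Gminmax}(3) applied to the second t-path gives $\Gmin(f_{B-C})=\{h_2\le u_CL_2\}$; so it suffices to prove $\{h_2>u_CL_2\}\subseteq\{h_1=L_1\}$. This is where the equality of the two descriptions of $f_{C-D}$ enters: writing $\phi=\max((t_1/t_2)L_1,h_1)$ and $\psi=\min(u_CL_2,h_2)$, equality of their normalizations forces $\phi=\psi+(t_1/t_2)L_1$ (their constant minima are $(t_1/t_2)L_1$ and $0$), and on $\{h_2>u_CL_2\}$ one computes $\phi=u_CL_2+(t_1/t_2)L_1=L_1$ using $u_CL_2=\frac{t_2-t_1}{t_2}L_1$, which forces $h_1=L_1$ there. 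Criterion (3) then yields $C\in\imag(P_{B-A})$ and $\imag(P_{B-A})=\imag(P_{A-C})\cup\imag(P_{B-C})$; since the first chunk is $\imag(P_{C-A})=\imag(P_{A-C})$ and $\imag(P_{B-C})$ is exactly the tail $\Lambda([t_2,1])$ of the second chunk, I conclude $\imag(\Lambda)=\imag(P_{B-A})$.

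Finally I would pin down the parametrization. By Lemma~\ref{lem:tpath}(2) the segment $\imag(P_{B-A})$ is isometric to $[0,\rho(A,B)]$ via $\hat P_{B-A}$, so a point of it is determined by its distance from $A$. Using the within-chunk isometries together with additivity of distance along the segment, and the values $\rho(A,D)=t_1r$ and $\rho(D,B)=L_2=(1-t_1)r$, I obtain $\rho(A,\Lambda(t))=tr$ on both $[0,t_2]$ and $[t_1,1]$; evaluating at $t=1$ identifies $r=\rho(A,B)$. Hence $\Lambda(t)$ and $P_{B-A}(t)$ are both the unique point of $\imag(P_{B-A})$ at distance $t\,\rho(A,B)$ from $A$, so $\Lambda(t)=P_{B-A}(t)$ for every $t$, which is the assertion.
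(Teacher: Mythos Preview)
Your proof is correct and follows the same overall architecture as the paper's: use Lemma~\ref{lem:clutching} to show the two partial segments glue into the single t-segment $\imag(P_{\Lambda(1)-\Lambda(0)})$, then match the linear parametrizations via the speed identity $\rho(\Lambda(t_1),\Lambda(t_2))=\tfrac{t_2-t_1}{t_2}\rho(\Lambda(0),\Lambda(t_2))=\tfrac{t_2-t_1}{1-t_1}\rho(\Lambda(t_1),\Lambda(1))$.

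The difference is where the clutching is performed. You clutch at $C=\Lambda(t_2)$ and verify criterion~(3) by an explicit computation: identifying the two CPA descriptions of $\normalize(f_{C-D})$ coming from the two t-paths, and deducing $\{h_2>u_CL_2\}\subseteq\{h_1=L_1\}$. The paper instead clutches at $D=\Lambda(t_1)$ and invokes only Remark~\ref{rmk:Gminmax}(1): since $C$ is an interior point of the second t-path one has $\Gmin(f_{C-D})=\Gmin(f_{B-D})$, and combining this with $\Gmin(f_{A-D})\cup\Gmin(f_{C-D})=\Gamma$ (from $D$ lying on the first segment) immediately yields $\Gmin(f_{A-D})\cup\Gmin(f_{B-D})=\Gamma$. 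The paper's route is noticeably shorter; your route is more hands-on but equally valid. Incidentally, your clutching at $C$ can be shortened to one line by the symmetric observation from Remark~\ref{rmk:Gminmax}(4): since $D$ is an interior point of the first t-path, $\Gmin(f_{D-C})=\Gmin(f_{A-C})$, and together with $\Gmin(f_{D-C})\cup\Gmin(f_{B-C})=\Gamma$ (from $C$ lying on the second segment) this gives the required identity without any CPA manipulation.
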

\begin{proof}
Under the assumptions, we have $\Lambda(t_1)\in\imag(P_{\Lambda(t_2)-\Lambda(0)})=\Lambda([0,t_2])$ and $\Lambda(t_2)\in\imag(P_{\Lambda(1)-\Lambda(t_1)})=\Lambda([t_1,1])$.
Note that a special case is that $\Lambda(t_1)=\Lambda(t_2)$, which implies $\Lambda(0)=\Lambda(1)=\Lambda(t_1)$ since $t_2>t_1$. Now we assume $\Lambda(t_1)\neq\Lambda(t_2)$.
Applying Lemma~\ref{lem:clutching}, we get $$\Gmin(f_{\Lambda(0)-\Lambda(t_1)})\bigcup\Gmin(f_{\Lambda(t_2)-\Lambda(t_1)})=\Gamma.$$ By Remark~\ref{rmk:Gminmax}, we get $$\Gmin(f_{\Lambda(t_2)-\Lambda(t_1)})=\Gmin(f_{\Lambda(1)-\Lambda(t_1)}).$$

Therefore, $$\Gmin(f_{\Lambda(0)-\Lambda(t_1)})\bigcup\Gmin(f_{\Lambda(1)-\Lambda(t_1)})=\Gamma,$$ and it again follows from Lemma~\ref{lem:clutching} that $\Lambda(t_1)\in\imag(P_{\Lambda(1)-\Lambda(0)})$.

Using a similar argument, we get $\Lambda(t_2)\in\imag(P_{\Lambda(1)-\Lambda(0)})$. Thus
$$\imag(P_{\Lambda(1)-\Lambda(0)})=\imag(P_{\Lambda(t_2)-\Lambda(0)})\bigcup\imag(P_{\Lambda(1)-\Lambda(t_1)}=\imag(\Lambda).$$
Note that $$\rho(\Lambda(t_1),\Lambda(t_2))=\frac{t_2-t_1}{t_2}\rho(\Lambda(0),\Lambda(t_2))=\frac{t_2-t_1}{1-t_1}\rho(\Lambda(t_1),\Lambda(1)).$$ Therefore, we must have $\Lambda=P_{\Lambda(1)-\Lambda(0)}$ as claimed.
\end{proof}

If $d$ is an integer and $S_d$ is the symmetric group of degree $d$, then $\DivPlusD(\Gamma)=\Gamma^d/S_d$ set-theoretically. Therefore, other than the metric topology, $\DivPlusD(\Gamma)$ has a topology induced from $\Gamma$ as a $d$-fold symmetric product. The following proposition says that these two topologies on $\DivPlusD(\Gamma)$ are actually the same.

\begin{prop}
On $\DivPlusD(\Gamma)$, the metric topology is the same as the the induced topology as a $d$-fold symmetric product of $\Gamma$.
\end{prop}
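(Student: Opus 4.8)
The plan is to show the identity map between the two topologies on $\DivPlusD(\Gamma)$ is a homeomorphism by proving it is continuous in both directions; equivalently, that a sequence of divisors $D_n \to D$ in one topology if and only if it converges in the other. Since $\DivPlusD(\Gamma)=\Gamma^d/S_d$ is compact and both topologies are metrizable (the symmetric product topology is induced by, say, the Hausdorff-type distance coming from $\dist_\Gamma$, and $\rho$ is a metric by the earlier construction), it suffices to prove that the two topologies have the same convergent sequences, or even just that one is finer than the other and invoke compactness. I would aim to establish directly that the identity from the symmetric-product topology to the $\rho$-metric topology is continuous; since the source is compact Hausdorff and the target is Hausdorff, a continuous bijection is automatically a homeomorphism.

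First I would reduce to a statement about the $j$-function. Write $D_n=\sum_{i=1}^d (p_{n,i})$ and $D=\sum_{i=1}^d (p_i)$, where convergence in the symmetric product means that, after reindexing by some permutation in $S_d$, $p_{n,i}\to p_i$ on $\Gamma$ for each $i$. Fix an auxiliary point $q\in\Gamma$. By the explicit associated function from Section~\ref{S:metricstructure}, a representative of $f_{D-D_n}$ is
\[
g_n(x)=\sum_{i=1}^d j_q(x,p_i)-\sum_{i=1}^d j_q(x,p_{n,i}).
\]
By property (1) of the $j$-function, $j_q(x,p)$ is jointly continuous in $x$ and $p$; since $\Gamma$ is compact, it is in fact uniformly continuous, so $j_q(\cdot,p_{n,i})\to j_q(\cdot,p_i)$ uniformly on $\Gamma$ as $p_{n,i}\to p_i$. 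Summing the $d$ terms, $g_n\to 0$ uniformly on $\Gamma$, hence $\max g_n-\min g_n\to 0$, which is exactly $\rho(D_n,D)\to 0$. This proves the identity map from the symmetric-product topology to the metric topology is continuous (sequential continuity suffices as both spaces are metrizable).

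For the reverse direction I would use the compactness packaging rather than a direct estimate, since inverting $\rho$ back to control on the points is the delicate part. The identity map is a continuous bijection from the compact space $\Gamma^d/S_d$ (compact as a quotient of the compact space $\Gamma^d$) onto $(\DivPlusD(\Gamma),\rho)$, which is Hausdorff because $\rho$ is a genuine metric. A continuous bijection from a compact space to a Hausdorff space is a closed map and hence a homeomorphism, so the inverse is automatically continuous and the two topologies coincide.

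The main obstacle I anticipate is the reverse continuity, i.e. recovering pointwise convergence of the $p_{n,i}$ from $\rho(D_n,D)\to 0$. A direct argument is subtle because $\rho$ measures a global potential-theoretic spread rather than a sum of pointwise distances, and the correspondence between the points $p_{n,i}$ and $p_i$ must be chosen by an optimal permutation. The compactness argument sidesteps this by trading an explicit estimate for the topological fact that continuous bijections out of compact spaces are closed; the only thing that genuinely must be verified by hand is therefore the forward direction via uniform continuity of the $j$-function, which the list of properties in Section~\ref{S:potential} supplies.
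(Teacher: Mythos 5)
Your proposal is correct, and its two halves relate to the paper's proof differently. The forward direction (symmetric-product convergence implies $\rho$-convergence) is essentially the paper's argument in different clothing: the paper bounds $\rho(D^{(n)},D)\leqslant\sum_i r(p_i^{(n)},q_i)\leqslant\sum_i\dist(p_i^{(n)},q_i)$ using the effective resistance, while you get the same conclusion from uniform continuity of $j_q(x,p)$ on the compact $\Gamma\times\Gamma$; both rest on the listed properties of the $j$-function. The reverse direction is where you genuinely diverge. The paper proves it by hand: starting from a point $q_i\in\supp D$ it walks along directions of sufficiently large slope of $f_{D^{(n)}-D}$ to locate a point $p_i^{(n)}\in\supp D^{(n)}$ with $\dist(p_i^{(n)},q_i)\leqslant C_i\cdot\rho(D^{(n)},D)$, where $C_i$ depends only on the combinatorics of $\Gamma$. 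You instead invoke the soft fact that a continuous bijection from a compact space ($\Gamma^d/S_d$) to a Hausdorff space ($\DivPlusD(\Gamma)$ with the metric $\rho$) is a homeomorphism; this is valid here since $\Gamma$ is assumed compact and $\rho$ has already been shown to be a metric, so your proof is complete and considerably shorter. What the paper's harder route buys is quantitative information: an explicit local Lipschitz-type bound on how far the support points can move in terms of $\rho(D^{(n)},D)$, which the compactness argument cannot produce and which would survive in settings where compactness is unavailable. Your identification of the reverse direction as "the delicate part" and the decision to sidestep it is sound, but be aware that you are trading away the effective estimate that the paper's construction provides.
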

\begin{proof}
Denote the first topology by $\mathcal{T}_1$ and the second by $\mathcal{T}_2$. To show $\mathcal{T}_1=\mathcal{T}_2$, it suffices to show that for a divisor $D=\sum_{i=1}^d (q_i)$ with $q_i\in\Gamma$, a sequence $\{D^{(n)}\}_n$ converges to $D$ in $\mathcal{T}_2$ if and only if $\rho(D^{(n)},D)\rightarrow 0$. In addition, we note that to say $D^{(n)}\rightarrow D$ in $\mathcal{T}_2$ is equivalent to say that there exists $d$ sequences of points on $\Gamma$, $\{p_i^{(n)}\}_n$ for $i=1,\dots,d$, such that $D^{(n)}=\sum_{i=1}^d(p_i^{(n)})$ and $p_i^{(n)}\rightarrow q_i$ on $\Gamma$.

Suppose $D^{(n)}\rightarrow D$ in $\mathcal{T}_2$. Since $$\rho(D^{(n)},D)\leqslant\sum_{i=1}^d\rho((p_i^{(n)}),(q_i))=\sum_{i=1}^d r(p_i^{(n)},q_i) \leqslant\sum_{i=1}^d\dist(p_i^{(n)},q_i)$$ where $r(p_i^{(n)},q_i)$ is the effective index between $p_i^{(n)}$ and $q_i$ (see Section~\ref{S:potential}),
we conclude that $D^{(n)}\rightarrow D$ in $\mathcal{T}_1$.

Now suppose $D^{(n)}\rightarrow D$ in $\mathcal{T}_1$ which means $\rho(D^{(n)},D)=\max(\normalize(f_{D^{(n)}-D}))\rightarrow 0$.
Considering the divisors $D$ and $D^{(n)}$, for each point $q_i\in\supp D$, we will associate a point $p_i^{(n)}\supp D^{(n)}$ with an procedure as follows.

Let $M$ be the maximum number of degrees among all the points in $\Gamma$. This means each point $p\in\Gamma$ has at most $M$ adjacent edges. Denote the sum of slopes of $f_{D^{(n)}-D}$ for all outgoing directions from $p\in\Gamma$ by $\chi(p)$. Then $\chi(p)=-(\Delta f_{D^{(n)}-D})(p)=D(p)-D^{(n)}(p)$. Let $V(\Gamma)$ be a vertex set of $\Gamma$.

First, we will determine $p_1^{(n)}$ for $q_1$.

If $q_1\in\supp D^{(n)}$, we let $p_1^{(n)}=q_1$.

Otherwise, we must have $\chi(q_1)\geqslant 1$ and there must be an outgoing direction $\vec{V}_{q_1}$ from $q_1$ with a slope at least $1/M$. Let $w(q_1)\in V(\Gamma)$ be the adjacent vertex of $q_1$ in direction $\vec{V}_{q_1}$. If there exists a point in $\supp D^{(n)}$ that lies in the half-open-half-closed segment $(q_1,w(q_1)]$, then we let $p_1^{(n)}$ be this point. Clearly, $f_{D^{(n)}-D}(q_1)<f_{D^{(n)}-D}(p_1^{(n)})$ and $\dist(p_1^{(n)},q_1)\leqslant M\cdot\rho(D^{(n)},D)$ in this case.

Otherwise, we must have $\chi(w(q_1))\geqslant 0$. Since the slope of the outgoing direction from $w(q_1)$ to $q_1$ is at most $-1/M$, the sum of slopes in the remaining outgoing directions from $w(q_1)$ is at least $1/M$ and there must be an outgoing direction $\vec{V}_{w(q_1)}$ from $w(q_1)$ with a slope at least $1/(M(M-1))$. Let $w^2(q_1)\in V(\Gamma)$ be the adjacent vertex of $w(q_1)$ in direction $\vec{V}_{w(q_1)}$. Following the same procedure, we let $p_1^{(n)}$ be a point contained in both $\supp D^{(n)}$ and $(w(q_1),w^2(q_1)]$ if their intersection is nonempty, and otherwise keep seeking $p_1^{(n)}$ in the next outgoing direction from $w^2(q_1)$ with slope at least $1/(M(M-1)^2)$.

The procedure must terminate in finitely many steps since we only have finitely many elements in $V(\Gamma)$. Let $N=|V(\Gamma)|$. We conclude that
we can find $p_1^{(n)}$ within $N$ steps and $\dist(p_1^{(n)},q_1)\leqslant C_1\cdot\rho(D^{(n)},D)$ where $C_1=M(M-1)^N$.

Next we will determine $p_i^{(n)}$ one by one inductively. Suppose for $i=2,\ldots,d'$ ($d'<d$), we have determined $p_i^{(n)}$ and known that $\dist(p_i^{(n)},q_i)\leqslant C_i\cdot\rho(D^{(n)},D)$ where $C_i$'s are constants. We let $D^{(n)}_{d'}=D^{(n)}-\sum_{i=1}^{d'}(p_i^{(n)})$ and
$D_{d'}=D-\sum_{i=1}^{d'}(q_i)$. Then
\begin{align*}
\rho(D^{(n)}_{d'},D_{d'})&\leqslant\rho(D^{(n)},D)+\sum_{i=1}^{d'}r(p_i^{(n)},q_i) \\
&\leqslant\rho(D^{(n)},D)+\sum_{i=1}^{d'}\dist(p_i^{(n)},q_i) \\
&=(1+\sum_{i=1}^{d'}C_i)\rho(D^{(n)},D).
\end{align*}
Following exactly the same procedure we used to seek $p_1^{(n)}$, we can find $p_{d'+1}^{(n)}\in\supp D^{(n)}_{d'}$ such that
$$\dist(p_{d'+1}^{(n)},q_{d'+1})\leqslant C_1\cdot\rho(D^{(n)}_{d'},D_{d'})=C_{d'+1}\cdot\rho(D^{(n)},D)$$ where $C_{d'+1}=C_1(1+\sum_{i=1}^{d'}C_i)$.

In this way, for each $D^{(n)}$, we can find $p_i^{(n)}$ such that $D^{(n)}=\sum_{i=1}^d(p_i^{(n)})$ and $\dist(p_i^{(n)},q_i)$ is bounded by $C_i\cdot\rho(D^{(n)},D)$. This means $D^{(n)}\rightarrow D$ in $\mathcal{T}_1$ implies $D^{(n)}\rightarrow D$ in $\mathcal{T}_2$.
\end{proof}

\begin{lem} \label{lem:rescale}
The scaling map $\phi:\RDiv_+^{d'}(\Gamma)\rightarrow\RDiv_+^{d}(\Gamma)$ given by $\phi(D)=\frac{d}{d'}D$ is a homeomorphism. Moreover, $$\rho(\phi(D_1),\phi(D_2))=\frac{d}{d'}\rho(D_1,D_2)$$ for $D_1,D_2\in\RDiv_+^{d}(\Gamma)$.
\end{lem}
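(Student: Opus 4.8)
The plan is to verify directly that $\phi$ is a bijection and that it scales the metric $\rho$ by the factor $d/d'$; once the metric identity is in hand, continuity of both $\phi$ and its inverse is automatic and the homeomorphism claim follows immediately. Throughout I take $D_1,D_2\in\RDiv_+^{d'}(\Gamma)$, the domain of $\phi$.

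First I would check that $\phi$ is well-defined and bijective. Since $d/d'>0$, multiplication by $d/d'$ preserves effectiveness, and it multiplies the degree by $d/d'$, sending $d'$ to $d$; hence $\phi$ maps $\RDiv_+^{d'}(\Gamma)$ into $\RDiv_+^{d}(\Gamma)$. The map $D\mapsto\frac{d'}{d}D$ is a two-sided inverse of exactly the same form, so $\phi$ is a bijection.

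The key step is the metric identity. Given an associated function $f_{D_2-D_1}\in\CPA$, so that $\Delta f_{D_2-D_1}=\delta_{D_2}-\delta_{D_1}$, I would observe that $\delta_{\phi(D_2)}-\delta_{\phi(D_1)}=\frac{d}{d'}(\delta_{D_2}-\delta_{D_1})$. By linearity of the Laplacian, $\frac{d}{d'}f_{D_2-D_1}$ is then an associated function of $\phi(D_2)-\phi(D_1)$, and since associated functions are unique up to an additive constant I may take $f_{\phi(D_2)-\phi(D_1)}=\frac{d}{d'}f_{D_2-D_1}$. Because $d/d'>0$, normalization commutes with this scaling: for any $g\in\CPA$ we have $\min(\frac{d}{d'}g)=\frac{d}{d'}\min(g)$, whence $\normalize(\frac{d}{d'}g)=\frac{d}{d'}\normalize(g)$. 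Therefore
$$\rho(\phi(D_1),\phi(D_2))=\max\bigl(\normalize(f_{\phi(D_2)-\phi(D_1)})\bigr)=\frac{d}{d'}\max\bigl(\normalize(f_{D_2-D_1})\bigr)=\frac{d}{d'}\rho(D_1,D_2),$$
which is the claimed identity.

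Finally, this identity shows that $\phi$ is Lipschitz with constant $d/d'$, hence continuous; applying the same reasoning to $\phi^{-1}$ (which scales $\rho$ by $d'/d$) shows $\phi^{-1}$ is continuous as well, so $\phi$ is a homeomorphism. I do not expect a genuine obstacle here: the entire content rests on the linearity of $\Delta$ together with the fact that $\normalize$ commutes with multiplication by a positive scalar. The only point that deserves a moment's care is precisely this commutation, since it would fail for a negative scaling factor — but $d/d'>0$, so it is exactly the situation we are in.
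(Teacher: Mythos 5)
Your proof is correct and is exactly the paper's argument spelled out in full: the paper's proof is the one-line remark that the lemma ``follows directly from the linearity of the Laplacian,'' and your verification that $\frac{d}{d'}f_{D_2-D_1}$ is an associated function of $\phi(D_2)-\phi(D_1)$, together with the commutation of $\normalize$ with positive scaling, is precisely what that remark leaves implicit. (You also correctly read $D_1,D_2\in\RDiv_+^{d'}(\Gamma)$ as the intended domain despite the typo in the statement.)
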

\begin{proof}
It follows directly from the linearity of the Laplacian.
\end{proof}


\section{Tropical convex sets: a generalization of complete linear systems} \label{S:tconvset}
\begin{dfn}
A set $T\subseteq\RDivPlusD(\Gamma)$ is \emph{tropically convex} (\emph{t-convex}) or equivalently \emph{t-path-connected} of degree $d$ if for every $D_1,D_2\in T$, the whole t-segment $imag(P_{D_2-D_1})$ connecting $D_1$ and $D_2$ is contained in $T$. 
\end{dfn}

Note that the intersection of an arbitrary collection of tropically convex sets of the same degree is tropically convex. Thus we define the \emph{tropical convex hull} generated by $S\subseteq\RDivPlusD(\Gamma)$, denoted by $\tconv(S)$, as the intersection of all tropically convex sets in $\RDivPlusD(\Gamma)$ containing $S$, and we say $S$ is a \emph{generating set} of $\tconv(S)$. If, in addition, $x\notin \tconv(S\setminus\{x\})$ for every $x\in S$, then we say $S$ is \emph{tropical convex (t-convex) independent}.
We say a tropical convex hull is \emph{finitely generated} if it can be generated by a finite set. In particular, we abuse notation here to write $\tconv(D_1,\ldots,D_n, S_1,\ldots,S_m)$ as a simplification of $\tconv(\{D_1,\ldots,D_n\}\bigcup S_1\ldots \bigcup S_m)$ when it is clear that $D_1,\ldots,D_n$ are divisors in $\RDivPlusD(\Gamma)$ and $S_1\ldots \bigcup S_m$ are subsets of $\RDivPlusD(\Gamma)$. In particular, by Lemma~\ref{lem:tpath}~(1), it is easy to verify that $\tconv(D_1,D_2)=\imag(P_{D_2-D_1})$, and we use them both interchangeably to represent the t-segment connecting $D_1$ and $D_2$.

If $d$ is an integer and $D_1,D_2\in\DivPlusD(\Gamma)$, we say $D_1$ is linearly equivalent to $D_2$ (denoted $D_1\sim D_2$) if $f_{D_2-D_1}$ is rational, i.e., piecewise-linear with integral slopes. This is equivalent to say $\tconv(D_1,D_2)\subseteq \DivPlusD(\Gamma)$. The complete linear system $|D|$ associated to $D\in\DivPlusD(\Gamma)$ is the set of effective divisors linearly equivalent to $D$.

We have the following facts:
\begin{enumerate}
  \item All complete linear systems $|D|$ are t-path-connected.
  \item $\DivPlusD(\Gamma)$ is not t-path-connected in general, and the nonempty complete linear systems of degree $d$ are the t-path-connected components in $\DivPlusD(\Gamma)$.
  \item $\RDivPlusD(\Gamma)$ is t-path-connected, but not finitely generated. When $d$ is an integer, we have in general $\RDivPlusD(\Gamma)\supsetneq\tconv(\DivPlusD(\Gamma))$. 
\end{enumerate}

\begin{lem} \label{lem:complete_finite}
Every complete linear system is finitely generated.
\end{lem}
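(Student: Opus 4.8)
The plan is to realize $|D|$ as a bounded, tropically convex \emph{polyhedral} complex and then extract finitely many tropical extreme divisors as generators. First I would set up the standard dictionary between $|D|$ and rational functions: the map $g\mapsto\Delta g+D$ (identifying $\Delta g$ with its associated divisor, as in the definition of $P_{D_2-D_1}$) is a bijection from $R(D):=\{\,g\in\CPA:\ g\text{ has integer slopes},\ \Delta g+D\geqslant 0\,\}/\mR$ onto $|D|$. Under this dictionary the t-path of the paper is exactly the min-plus tropical segment: if $D_1=\Delta g_1+D$ and $D_2=\Delta g_2+D$, then $f_{D_2-D_1}=g_2-g_1$ up to a constant, and unwinding $P_{D_2-D_1}(t)=\Delta\min(t\rho(D_1,D_2),\normalize(f_{D_2-D_1}))+D_1$ gives
$$\imag(P_{D_2-D_1})=\{\,\Delta\min(g_1+\lambda,\,g_2)+D:\ \lambda\in\mR\,\}.$$
Hence a t-convex subset of $|D|$ is precisely a subset closed under the operations $(g_1,g_2)\mapsto\min(g_1+\lambda,g_2+\mu)$, and since $\min$ is associative, iterating pairwise combinations shows that $\tconv(S)$ equals the set of all finite tropical (min-plus) combinations of the functions attached to $S$. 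Thus finite generation of $|D|$ is equivalent to producing finitely many $D_1,\dots,D_k\in|D|$ whose tropical combinations exhaust $|D|$.

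Next I would prove that $|D|$ is compact and carries a \emph{finite} polyhedral cell structure. Fix a model of $\Gamma$ with vertex set $V$. For $g\in R(D)$ the divisor $D'=\Delta g+D$ is effective of degree $d$, so the integer slopes of $g$ are bounded in absolute value by $d$, and the breakpoints of $g$ (where the slope changes, i.e.\ $\supp(\Delta g)$) lie in $V\cup\supp(D)\cup\supp(D')$; since $D'$ has degree $d$ the number of breakpoints is bounded independently of $g$. Consequently the functions in $R(D)$ fall into finitely many \emph{combinatorial types} — which edges carry breakpoints, in which order, and with which slopes between consecutive breakpoints — and each type is cut out by linear (in)equalities in the breakpoint positions, hence is a bounded polyhedron modulo constants. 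This exhibits $R(D)/\mR$, and therefore $|D|$, as a finite polyhedral complex; boundedness in $\rho$ is immediate since $\rho(D',D)=\max\normalize(g)\leqslant d\cdot\mathrm{diam}(\Gamma)$, and compactness follows from equicontinuity (slopes bounded) together with closedness of the defining conditions.

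Finally I would conclude finite generation from this compact, tropically convex, polyhedral structure. The tropical extreme points of $|D|$ — those $D'$ not lying in the tropical convex hull of $|D|\setminus\{D'\}$ — are contained in the finite vertex set of the complex, so they are finite in number, and a tropical Krein--Milman argument recovers $|D|$ as their tropical hull; taking $S$ to be this finite set (or, more crudely, all vertices of the complex) yields $|D|=\tconv(S)$. \textbf{The main obstacle is precisely this last transfer from \emph{ordinary} polyhedral structure to \emph{tropical} generation.} Finiteness of the cell complex is routine once slopes and breakpoint counts are bounded, but it does not by itself guarantee that the ordinary vertices tropically generate $|D|$, because an ordinary convex combination inside a higher-dimensional cell need not be a tropical combination of the vertices. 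The real work is therefore in the extreme-point step: showing that every divisor of $|D|$ is recovered as a min-plus combination of finitely many extremal divisors, where the compactness just established, the identification of t-segments with tropical segments, and the fact that extreme points sit among the finitely many vertices are the ingredients that make the argument close.
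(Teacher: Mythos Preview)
The paper's own proof is a single sentence: it cites \cite{HMY12} for the fact that a complete linear system has only finitely many extremals and that these extremals generate $|D|$, and stops there. Your proposal is therefore not so much a different route as an attempt to reprove from scratch precisely the result the paper imports from \cite{HMY12}. The dictionary you set up between $|D|$ and $R(D)/\mR$, the identification of t-segments with min-plus tropical segments, and the bounded-slope/bounded-breakpoint argument giving a finite polyhedral cell structure are all correct and are essentially the content of the relevant parts of \cite{HMY12}.

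You have also located the only genuine obstruction accurately: the passage from ``finite polyhedral complex'' to ``finitely tropically generated'' is the tropical Krein--Milman step, and it is not automatic. What closes it (and what \cite{HMY12} actually proves) is the following. For $f\in R(D)$ and each extremal $g$, set $c_g=\max_\Gamma(f-g)$; then $g+c_g\geqslant f$ pointwise, so $\min_g(g+c_g)\geqslant f$. Equality holds because the set where some $g+c_g$ agrees with $f$ is all of $\Gamma$: if at a point $p$ no extremal achieved equality, one could locally ``bend'' $f$ near $p$ to write $f=\min(f_1,f_2)$ with $f_1,f_2\in R(D)\setminus\{f\}$, and descending along such decompositions---which terminates by the finiteness of the cell complex you established---produces an extremal touching $f$ at $p$. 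This is exactly the argument in \cite{HMY12}; once you supply it, your outline becomes a complete proof, and it is then the same proof the paper is invoking by citation.
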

\begin{proof}
A complete linear system $|D|$ can always be generated by the extremals (we only have finitely many of them) in $|D|$.
\end{proof}

\begin{rmk}
The extremals of complete linear systems are introduced in \cite{HMY12}. (They actually define extremals in $L(D)$ instead of $|D|$.) We will generalize this notion to all tropical convex sets in Section~\ref{S:RD_tconv}.
\end{rmk}

Now let us consider tropical convex sets in general. Theorem~\ref{thm:contractible} and Theorem~\ref{thm:construction} state some fundamental properties of tropical convex sets. In particular, as it is well-known that conventional convex subsets of Euclidean spaces are contractible, Theorem~\ref{thm:contractible} says this is also true for all tropical convex sets. Theorem~\ref{thm:construction} tells us how to generate a tropical convex set from its subsets and provides a compactness criterion. Then we may deduce an important conclusion immediately that finitely generated tropical convex hulls are always compact (Corollary~\ref{cor:finite_gen}). To prove these theorems, we need to employ a machinery based on \emph{general reduced divisors} which will be introduced in the next section, and we will finish the proofs in Section~\ref{S:RD_tconv}.

\begin{thm} \label{thm:contractible}
Tropical convex sets are contractible.
\end{thm}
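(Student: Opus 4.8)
The plan is to contract $T$ to a chosen base point along tropical paths, exactly mimicking the straight-line contraction of an ordinary convex set. Assuming $T\neq\emptyset$ (the empty case being vacuous), I would fix any $D_0\in T$ and define $H\colon T\times[0,1]\to\RDivPlusD(\Gamma)$ by $H(D,t)=P_{D_0-D}(t)$. Since $P_{D_0-D}(0)=D$ and $P_{D_0-D}(1)=D_0$, this gives $H(\cdot,0)=\mathrm{id}_T$ and $H(\cdot,1)\equiv D_0$. Moreover, for each fixed $D$ the whole image $\imag(P_{D_0-D})$ lies in $T$ because $D,D_0\in T$ and $T$ is t-convex; hence $H$ maps into $T$. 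Note that no compactness or closedness of $T$ is needed for this, so the statement will hold for all t-convex sets. The entire theorem therefore reduces to showing that $H$ is continuous.

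The key step is to prove the stronger fact that the global evaluation map $(D_1,D_2,t)\mapsto P_{D_2-D_1}(t)$ is jointly continuous on $\RDivPlusD(\Gamma)\times\RDivPlusD(\Gamma)\times[0,1]$; restricting to $D_2=D_0$, $D_1=D$ then yields continuity of $H$. To estimate $\rho\big(P_{D_2-D_1}(t),\,P_{D_2'-D_1'}(t')\big)$ I would split it by the triangle inequality into a \emph{temporal} part $\rho\big(P_{D_2-D_1}(t),P_{D_2-D_1}(t')\big)$ and a \emph{spatial} part $\rho\big(P_{D_2-D_1}(t'),P_{D_2'-D_1'}(t')\big)$. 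The temporal part is handled immediately by Lemma~\ref{lem:tpath}~(2): since $P_{D_2-D_1}$ is an isometry after scaling by $\rho(D_1,D_2)$, it equals $|t-t'|\cdot\rho(D_1,D_2)$, which tends to $0$ as $t'\to t$.

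For the spatial part I would use the explicit formula $P_{D_2-D_1}(t')=\Delta\min\big(t'\rho(D_1,D_2),\normalize(f_{D_2-D_1})\big)+D_1$. Writing an associated function for the difference of two such divisors and using that the oscillation $\max-\min$ is subadditive, that $\min(\cdot,\cdot)$ is $1$-Lipschitz, and that $|\rho(D_1,D_2)-\rho(D_1',D_2')|\le\rho(D_1,D_1')+\rho(D_2,D_2')$, one bounds the spatial part by a constant multiple of
$$\rho(D_1,D_1')+\rho(D_2,D_2')+\big\|\normalize(f_{D_2-D_1})-\normalize(f_{D_2'-D_1'})\big\|_\infty.$$
The crux — and the step I expect to be the main obstacle — is controlling this last term, i.e.\ showing the normalized associated functions vary \emph{uniformly} with their endpoints, and interacting correctly with the truncation $\min(\cdot,\cdot)$ and the Laplacian. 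For this I would observe that $f_{D_2'-D_1'}-f_{D_2-D_1}$ has the same Laplacian as $f_{D_2'-D_2}-f_{D_1'-D_1}$, so its oscillation is at most $\rho(D_2,D_2')+\rho(D_1,D_1')$; and then invoke the elementary fact that two functions each normalized to minimum $0$ whose difference has oscillation $\le\varepsilon$ are uniformly within $\varepsilon$ of each other (the difference is $\le0$ where the first attains its minimum and $\ge0$ where the second does). This yields $\|\normalize(f_{D_2-D_1})-\normalize(f_{D_2'-D_1'})\|_\infty\le\rho(D_1,D_1')+\rho(D_2,D_2')$, so every term vanishes in the limit, giving joint continuity of the evaluation map and hence contractibility of $T$. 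Everything apart from this uniform control of the normalized potential is routine bookkeeping with the metric $\rho$.
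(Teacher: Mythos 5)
Your proof is correct, but it takes a genuinely different route from the paper's. The paper does not use the naive straight-line homotopy $H(D,t)=P_{D_0-D}(t)$; instead it contracts $T$ radially, sending $D$ at time $t$ to the point of $\tconv(D_0,D)$ at distance $\kappa(1-t)$ from $D_0$ (with $\kappa=\sup_{D'\in T}\rho(D_0,D')$), so that each time-slice $h(t,T)$ is a sublevel set $L^T_{\leqslant \kappa(1-t)}(\rho_{D_0})$ and hence itself tropically convex. The continuity of that map is established through the reduced-divisor machinery (Proposition~\ref{prop:dist_ineq}, Corollaries~\ref{cor:red_dist_ineq} and~\ref{cor:ball_dist_ineq}), and the construction is reused almost verbatim for the tropical retraction theorem in Section~\ref{S:Canproj}. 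You instead prove joint continuity of the evaluation map $(D_1,D_2,t)\mapsto P_{D_2-D_1}(t)$ directly from the explicit formula, and your key estimate checks out: $\normalize(f_{D_2-D_1})-\normalize(f_{D'_2-D'_1})$ is (up to a constant) an associated function of $(D_2+D'_1)-(D'_2+D_1)$, so its oscillation is at most $\rho(D_1,D'_1)+\rho(D_2,D'_2)$ by the subadditivity noted in Section~\ref{S:metricstructure}, and the normalization argument (the difference is $\geqslant 0$ somewhere on $\Gmin$ of one function and $\leqslant 0$ somewhere on $\Gmin$ of the other) correctly upgrades the oscillation bound to a sup-norm bound; combined with the $1$-Lipschitz truncation and $|\rho(D_1,D_2)-\rho(D'_1,D'_2)|\leqslant\rho(D_1,D'_1)+\rho(D_2,D'_2)$, this gives a clean Lipschitz estimate for the spatial part, while Lemma~\ref{lem:tpath}~(2) handles the temporal part. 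Your argument is more elementary and self-contained (no reduced divisors, no compactness of $T$), and it yields a quantitative statement about t-paths that the paper never isolates; what it does not give is the tropical convexity of the intermediate sets $H(T\times\{t\})$, which is irrelevant for contractibility but is the feature the paper's construction is designed to provide for its later results.
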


\begin{thm}\label{thm:construction}
Let $T,T'\subseteq\RDivPlusD(\Gamma)$ be tropically convex set. Then we have $\tconv(T,T')=\bigcup_{D\in T,D'\in T'}\tconv(D,D')$. If $T$ and $T'$ are compact in addition, then $\tconv(T,T')$ is compact.
\end{thm}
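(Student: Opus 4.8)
The plan is to prove the set equality first and then deduce compactness from it. Write $U=\bigcup_{D\in T,\,D'\in T'}\tconv(D,D')$ (we tacitly assume $T,T'\neq\emptyset$, as the formula requires). The inclusion $U\subseteq\tconv(T,T')$ is immediate: each endpoint $D\in T$ and $D'\in T'$ lies in $T\cup T'\subseteq\tconv(T,T')$, and since $\tconv(T,T')$ is t-convex it contains the whole t-segment $\tconv(D,D')=\imag(P_{D'-D})$. For the reverse inclusion it suffices to show $U\supseteq T\cup T'$ and that $U$ is itself t-convex, because $\tconv(T,T')$ is by definition the smallest t-convex set containing $T\cup T'$. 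The containment $U\supseteq T\cup T'$ holds because each $D\in T$ is an endpoint of $\tconv(D,D')$ for any fixed $D'\in T'$, and symmetrically for $T'$.

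The heart of the matter is the t-convexity of $U$: given $A,B\in U$, I must show $\imag(P_{B-A})\subseteq U$. By construction $A\in\tconv(D_1,D_1')$ and $B\in\tconv(D_2,D_2')$ for some $D_1,D_2\in T$ and $D_1',D_2'\in T'$. Since $T$ and $T'$ are t-convex, $\tconv(D_1,D_2)\subseteq T$ and $\tconv(D_1',D_2')\subseteq T'$, so it suffices to prove the claim when $T=\tconv(D_1,D_2)$ and $T'=\tconv(D_1',D_2')$ are single t-segments. This reduces everything to a four-point lemma: for $A\in\tconv(D_1,D_1')$ and $B\in\tconv(D_2,D_2')$, every divisor on $\tconv(A,B)$ should lie on some t-segment joining $\tconv(D_1,D_2)$ to $\tconv(D_1',D_2')$. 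This four-point statement is the main obstacle. In ordinary convexity the analogue is proved by rewriting a point of $[A,B]$ as a convex combination and regrouping the weights onto the two segments, but the tropical ``addition'' here is the fibrewise $\min$ of associated functions after applying $\Delta$, which does not regroup so cleanly. This is exactly where I expect to need the canonical projection / general reduced divisor machinery developed later: the idea is to certify membership of a candidate divisor in $U$ by exhibiting it as the appropriate reduced divisor (projection) of one segment relative to a suitable $E$ built from the other, using the clutching Lemma~\ref{lem:clutching} together with the $\Gmin$/$\Gmax$ bookkeeping of Remark~\ref{rmk:Gminmax} to glue the resulting pieces into a single t-segment.

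Finally, granting the equality $\tconv(T,T')=U$, compactness follows by a soft argument. Consider the map $\Phi:T\times T'\times[0,1]\to\RDivPlusD(\Gamma)$ defined by $\Phi(D,D',t)=P_{D'-D}(t)$, whose image is precisely $U$ since $\imag(P_{D'-D})=\tconv(D,D')$. If $T$ and $T'$ are compact, then $T\times T'\times[0,1]$ is compact, so it suffices to show $\Phi$ is continuous. The explicit formula $P_{D'-D}(t)=\Delta\min\!\big(t\cdot\rho(D,D'),\normalize(f_{D'-D})\big)+D$ exhibits $\Phi$ as assembled from ingredients that vary continuously: $\rho$ is continuous as a metric, and the normalized associated function $\normalize(f_{D'-D})$ depends continuously (in sup norm) on $(D,D')$ because associated functions are integrals of the jointly continuous $j$-function against $\delta_{D'}-\delta_D$ over the compact graph $\Gamma$. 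Continuity of $t\mapsto P_{D'-D}(t)$ for fixed endpoints is already recorded in the remark following the definition of the t-path, and the same sup-norm estimates (together with $\rho(A,B)\leqslant 2\|f_{B-A}\|_\infty$ for a normalized representative) upgrade this to joint continuity into the metric $\rho$. Hence $U=\Phi(T\times T'\times[0,1])$ is the continuous image of a compact set and therefore compact. The only genuinely nontrivial ingredient that remains is the four-point lemma above.
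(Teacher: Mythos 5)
There is a genuine gap in the main part of your argument: the t-convexity of $U=\bigcup_{D\in T,D'\in T'}\tconv(D,D')$, which you correctly isolate as a ``four-point lemma,'' is exactly the content of the theorem and you leave it unproven, only gesturing at the reduced-divisor machinery. For the record, here is how the paper closes it, so you can see what your sketch is missing. Given $E_1\in\tconv(D_1,D'_1)$, $E_2\in\tconv(D_2,D'_2)$ and $E\in\tconv(E_1,E_2)$, take $D$ to be the $E$-reduced divisor in $\tconv(D_1,D_2)\subseteq T$ and $D'$ the $E$-reduced divisor in $\tconv(D'_1,D'_2)\subseteq T'$ --- both with respect to the \emph{same} $E$. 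Lemma~\ref{lem:RD_on_tsegment} gives $\Gmin(f_{D-E})=\Gmin(f_{D_1-E})\cup\Gmin(f_{D_2-E})$ and $\Gmin(f_{D'-E})=\Gmin(f_{D'_1-E})\cup\Gmin(f_{D'_2-E})$, together with $\Gmin(f_{E_i-E})\subseteq\Gmin(f_{D_i-E})\cup\Gmin(f_{D'_i-E})$ for $i=1,2$; since $E\in\tconv(E_1,E_2)$ forces $\Gmin(f_{E_1-E})\cup\Gmin(f_{E_2-E})=\Gamma$ by Lemma~\ref{lem:clutching}, one gets $\Gmin(f_{D-E})\cup\Gmin(f_{D'-E})=\Gamma$, i.e.\ $E\in\tconv(D,D')\subseteq U$, again by Lemma~\ref{lem:clutching}. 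Your framing (``exhibit the point as a reduced divisor of one segment relative to an $E$ built from the other'') is not quite the right mechanism: the point $E$ itself plays the role of the base divisor, and membership is certified by the clutching criterion applied to the two projections, not by identifying $E$ as a projection. Until this step is written out, the set equality is not established.

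Your compactness argument, by contrast, is a genuinely different and valid route from the paper's. The paper proves compactness of $U$ by verifying completeness and total boundedness directly, using the distance-contraction property of reduced divisors (Lemma~\ref{lem:dist_decrease} and Corollary~\ref{cor:red_dist_ineq}); you instead exhibit $U$ as the continuous image of the compact set $T\times T'\times[0,1]$ under $\Phi(D,D',t)=P_{D'-D}(t)$, which is softer and shorter. The joint continuity of $\Phi$ does hold, but your justification via the $j$-function integral representation is the wrong tool: continuity of that representation is continuity in the supports and masses, whereas you need continuity with respect to $\rho$. The clean way is the purely metric estimate
$$\bigl\|\normalize(f_{D'_2-D'_1})-\normalize(f_{D_2-D_1})\bigr\|_\infty\leqslant 2\bigl(\rho(D_1,D'_1)+\rho(D_2,D'_2)\bigr),$$
obtained by writing $f_{D'_2-D'_1}=f_{D'_2-D_2}+f_{D_2-D_1}+f_{D_1-D'_1}$ and noting that adding a function of small sup norm perturbs the normalization by at most twice that norm; combined with continuity of $\rho(D_1,D_2)$ this makes $\min(t\rho(D_1,D_2),\normalize(f_{D_2-D_1}))$ jointly continuous in sup norm, and hence $\Phi$ continuous into $(\RDivPlusD(\Gamma),\rho)$. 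With that repair, your compactness paragraph is a legitimate alternative to the paper's; but the theorem as a whole still rests on the unproven four-point lemma.
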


\begin{cor} \label{cor:finite_gen}
Every finitely generated tropical convex hull is compact.
\end{cor}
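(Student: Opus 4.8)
The plan is to deduce the corollary directly from Theorem~\ref{thm:construction} by an induction on the number of generators, using single points as the base case. First I would observe that a single divisor $D\in\RDivPlusD(\Gamma)$ forms a compact tropical convex set, since the singleton $\{D\}$ is trivially t-convex (the only t-segment it must contain is the degenerate one from $D$ to itself) and is obviously compact. This gives the base case of the induction.

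For the inductive step, suppose $S=\{D_1,\ldots,D_n\}$ is a finite generating set and that $\tconv(D_1,\ldots,D_{n-1})$ is already known to be compact and (being a tropical convex hull) t-convex. Set $T=\tconv(D_1,\ldots,D_{n-1})$ and $T'=\{D_n\}$; both are compact and t-convex. I would then appeal to Theorem~\ref{thm:construction}, which asserts both that $\tconv(T,T')=\bigcup_{D\in T,\,D'\in T'}\tconv(D,D')$ and, under the compactness hypothesis on $T$ and $T'$, that $\tconv(T,T')$ is compact. The only point needing care here is the identity $\tconv(T,T')=\tconv(D_1,\ldots,D_n)$: since $T=\tconv(D_1,\ldots,D_{n-1})\supseteq\{D_1,\ldots,D_{n-1}\}$ and $T'=\{D_n\}$, we have $S\subseteq T\cup T'$, so $\tconv(S)\subseteq\tconv(T\cup T')=\tconv(T,T')$; conversely $T\cup T'\subseteq\tconv(S)$ because $\tconv(S)$ is t-convex and contains each $D_i$, whence $\tconv(T,T')\subseteq\tconv(S)$. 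Thus $\tconv(D_1,\ldots,D_n)=\tconv(T,T')$ is compact, completing the induction.

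Since the corollary concerns an arbitrary finitely generated tropical convex hull $\tconv(S)$ with $S=\{D_1,\ldots,D_n\}$ finite, the induction above immediately yields its compactness. I do not expect any genuine obstacle, as the entire content is packaged into Theorem~\ref{thm:construction}; the corollary is essentially a bookkeeping consequence. The one step deserving explicit mention is verifying that the intermediate hulls are themselves t-convex so that they qualify as the hypotheses $T,T'$ of the theorem, but this is automatic from the definition of a tropical convex hull as an intersection of t-convex sets.
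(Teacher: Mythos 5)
Your proof is correct and follows essentially the same route as the paper, which likewise deduces the corollary by induction on Theorem~\ref{thm:construction} (the paper anchors the induction with the compactness of t-segments from Lemma~\ref{lem:tpath}~(3), while you start one step earlier with singletons — an immaterial difference). The verification that $\tconv(T,T')=\tconv(D_1,\ldots,D_n)$ is a reasonable detail to spell out and is handled correctly.
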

\begin{proof}
It follows immediately from Lemma~\ref{lem:tpath}~(3) and an induction on Theorem~\ref{thm:construction}.
\end{proof}

\begin{rmk} \label{rmk:complete_compact}
The complete linear systems are finitely generated (Lemma~\ref{lem:complete_finite}) and thus compact in our metric topology (Corollary~\ref{cor:finite_gen}).
\end{rmk}

\section{General reduced divisors} \label{S:GRD}
\subsection{$\Sfunc$-functions}
Let the $\Sfunc$-function $\Sfunc:\RDiv^0(\Gamma)\rightarrow \mR_+$ be given by $\Sfunc(D_2-D_1)=\int_\Gamma(f_{D_2-D_1}-\min(f_{D_2-D_1}))=\int_\Gamma\normalize(f_{D_2-D_1})$, where $D_1$ and $D_2$ are effective $\mR$-divisors of the same degree. In addition, for $d\geqslant0$, we define the $\Sfunc$-function restricted to degree $d$ as $\Sfunc^d:\RDivPlusD(\Gamma)\times\RDivPlusD(\Gamma)\rightarrow \mR_+$ given by $\Sfunc^d(D_1,D_2)=\Sfunc(D_2-D_1)$. Unlike the distance function, we have $\Sfunc(D_2-D_1)\neq \Sfunc(D_1-D_2)$ in general. It is straightforward to verify that (1) $\Sfunc(D_1-D_2)+\Sfunc(D_2-D_1)=\rho(D_1,D_2)l_\mathrm{tot}$ where $l_\mathrm{tot}$ is the total length of $\Gamma$, and (2) $\Sfunc(D_1-D_2)=0$ if and only if $\rho(D_1,D_2)=0$. Fixing $D_1$ or $D_2$, we get the functions $\Sfunc_{\star-D_1}:\RDivPlusD(\Gamma)\rightarrow \mR_+$ given by $\Sfunc_{\star-D_1}(D)=\Sfunc(D-D_1)$ and $\Sfunc_{D_2-\star}:\RDivPlusD(\Gamma)\rightarrow \mR_+$ given by $\Sfunc_{D_2-\star}(D)=\Sfunc(D_2-D)$, respectively.

\begin{rmk} \label{rmk:bfunction}
For $D\in\DivPlusD(\Gamma)$ and $q\in\Gamma$, the $b$-function $b_q(D)$ Baker and Shokrieh introduced in \cite{BF12} is essentially a special case of the $\Sfunc$-function in the following sense:
$$b_q(D) = \Sfunc(D-d\cdot(q)).$$
\end{rmk}

\begin{lem} \label{lem:Sfunc}
\begin{enumerate}
\item For $D_1,D_2,D_3\in\RDivPlusD(\Gamma)$, we have the triangle inequality $$\Sfunc(D_3-D_1)\leqslant\Sfunc(D_3-D_2)+\Sfunc(D_2-D_1).$$ The equality holds if and only if $$\Gmin(f_{D_3-D_2})\bigcap\Gmin(f_{D_2-D_1})\neq\emptyset$$ if and only if
    $$\Gmin(f_{D_3-D_1})=\Gmin(f_{D_3-D_2})\bigcap\Gmin(f_{D_2-D_1}).$$
\item For $D_1,D_2,D_3\in\RDivPlusD(\Gamma)$, $\rho(D_1,D_3)=\rho(D_1,D_2)+\rho(D_2,D_3)$ if and only if \\
      $$\Sfunc(D_3-D_1)=\Sfunc(D_3-D_2)+\Sfunc(D_2-D_1)$$ and
      $$\Sfunc(D_1-D_3)=\Sfunc(D_1-D_2)+\Sfunc(D_2-D_3).$$
\item The functions $\Sfunc^d$, $\Sfunc_{\star-D}$ and $\Sfunc_{D-\star}$ are continuous.
\end{enumerate}
\end{lem}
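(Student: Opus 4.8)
The plan is to prove (1) by a direct computation and then obtain (2) and (3) as formal consequences. For (1), I would pass to the normalized associated functions $g=\normalize(f_{D_3-D_2})$ and $h=\normalize(f_{D_2-D_1})$, so that $g,h\geqslant 0$ and each attains the value $0$. Since $\Delta(g+h)=\delta_{D_3}-\delta_{D_1}$, the sum $g+h$ is an associated function of $D_3-D_1$, whence $\normalize(f_{D_3-D_1})=\normalize(g+h)=g+h-\min(g+h)$. Integrating over $\Gamma$ gives
\[
\Sfunc(D_3-D_1)=\Sfunc(D_3-D_2)+\Sfunc(D_2-D_1)-\min(g+h)\cdot l_\mathrm{tot}.
\]
Because $g,h\geqslant 0$ we have $\min(g+h)\geqslant 0$, which yields the inequality, with equality exactly when $\min(g+h)=0$. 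As $g$ and $h$ are nonnegative, this occurs if and only if they share a common zero, i.e. $\Gmin(f_{D_3-D_2})\cap\Gmin(f_{D_2-D_1})\neq\emptyset$. When this holds, $\normalize(f_{D_3-D_1})=g+h$, whose zero set is precisely $\Gmin(g)\cap\Gmin(h)$, giving the second characterization; conversely that set identity forces the intersection to be nonempty, since $f_{D_3-D_1}$ attains its minimum on the compact $\Gamma$.

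For (2), I would apply (1) twice. By (1), the first $\Sfunc$-identity is equivalent to $\Gmin(f_{D_3-D_2})\cap\Gmin(f_{D_2-D_1})\neq\emptyset$. Applying (1) to the reversed chain $D_3,D_2,D_1$, the second $\Sfunc$-identity is equivalent to $\Gmin(f_{D_1-D_2})\cap\Gmin(f_{D_2-D_3})\neq\emptyset$. Using $f_{D_1-D_2}=-f_{D_2-D_1}$ up to a constant, we have $\Gmin(f_{D_1-D_2})=\Gmax(f_{D_2-D_1})$ and likewise $\Gmin(f_{D_2-D_3})=\Gmax(f_{D_3-D_2})$, so the second $\Sfunc$-identity is equivalent to $\Gmax(f_{D_2-D_1})\cap\Gmax(f_{D_3-D_2})\neq\emptyset$. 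The conjunction of these two intersection conditions is exactly the criterion for the $\rho$-triangle equality $\rho(D_1,D_3)=\rho(D_1,D_2)+\rho(D_2,D_3)$ recorded in Section~\ref{S:metricstructure}, which gives the claim.

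For (3), I would extract Lipschitz bounds from (1) together with the identity $\Sfunc(D-D')+\Sfunc(D'-D)=\rho(D,D')\,l_\mathrm{tot}$. Applying the triangle inequality of (1) in both orders gives, for fixed $D_3$,
\[
|\Sfunc(D_3-D_1)-\Sfunc(D_3-D_2)|\leqslant\max(\Sfunc(D_2-D_1),\Sfunc(D_1-D_2))\leqslant\rho(D_1,D_2)\,l_\mathrm{tot},
\]
so $\Sfunc_{D_3-\star}$ is $l_\mathrm{tot}$-Lipschitz; the identical argument with the subtrahend fixed shows $\Sfunc_{\star-D}$ is $l_\mathrm{tot}$-Lipschitz. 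Splitting a joint increment through the intermediate divisor $D_2'-D_1$ and adding these two bounds then yields $|\Sfunc(D_2-D_1)-\Sfunc(D_2'-D_1')|\leqslant(\rho(D_1,D_1')+\rho(D_2,D_2'))\,l_\mathrm{tot}$, establishing joint continuity of $\Sfunc^d$.

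None of the steps presents a serious difficulty: the computation in (1) is the crux, and the only points demanding care are the bookkeeping of the two equivalent equality conditions in (1) and the translation $\Gmin(f_{D_1-D_2})=\Gmax(f_{D_2-D_1})$ needed in (2) to match the two $\Sfunc$-identities against the known $\rho$-equality criterion.
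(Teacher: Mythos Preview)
Your proposal is correct and follows essentially the same approach as the paper: normalize the two associated functions, add them, and read off the inequality and equality conditions from $\min(g+h)\geqslant 0$; deduce (2) from (1) via the $\Gmin\leftrightarrow\Gmax$ translation and the $\rho$-equality criterion of Section~\ref{S:metricstructure}; and obtain (3) from the triangle inequality of (1) together with $\Sfunc\leqslant\rho\cdot l_\mathrm{tot}$. The only cosmetic difference is that you package (3) as explicit Lipschitz bounds, whereas the paper writes out the two one-sided estimates for $\Sfunc(D'_2-D'_1)-\Sfunc(D_2-D_1)$ directly.
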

\begin{proof}
For the triangle inequality, we let $f_{D_3-D_2}$ and $f_{D_2-D_1}$ be associated to $D_3-D_2$ and $D_2-D_1$ respectively, and assume $f_{D_3-D_2}=\normalize(f_{D_3-D_2})$ and $f_{D_2-D_1}=\normalize(f_{D_2-D_1})$. Let $f_{D_3-D_1}=f_{D_3-D_2}+f_{D_2-D_1}$, which is associated to $D_3-D_1$. Note that $$\min(f_{D_3-D_1})\geqslant\min(f_{D_3-D_2})+\min(f_{D_2-D_1})=0,$$ while the equality holds if and only if $$\Gmin(f_{D_3-D_2})\bigcap\Gmin(f_{D_2-D_1})\neq\emptyset$$ if and only if
    $$\Gmin(f_{D_3-D_1})=\Gmin(f_{D_3-D_2})\bigcap\Gmin(f_{D_2-D_1}).$$
Thus
\begin{align*}
\Sfunc(D_3-D_1)&=\int_\Gamma(f_{D_3-D_1}-\min(f_{D_3-D_1}))\\
&\leqslant\int_\Gamma f_{D_3-D_1}\\
&=\int_\Gamma f_{D_3-D_2} + \int_\Gamma f_{D_2-D_1}\\
&=\Sfunc(D_3-D_2)+\Sfunc(D_2-D_1),
\end{align*}
with the equality holds under the same conditions.

For (2), $\rho(D_1,D_3)=\rho(D_1,D_2)+\rho(D_2,D_3)$ if and only if
$$\Gmin(f_{D_2-D_1})\bigcap\Gmin(f_{D_3-D_2})\neq\emptyset$$ and $$\Gmax(f_{D_2-D_1})\bigcap\Gmax(f_{D_3-D_2})\neq\emptyset.$$ Note that $\Gmax(f_{D_2-D_1})=\Gmin(f_{D_1-D_2})$ and
$\Gmax(f_{D_3-D_2})=\Gmin(f_{D_2-D_3})$, and hence (2) follows from (1).

For (3), it suffices to show $\Sfunc(D'_2-D'_1)\rightarrow\Sfunc(D_2-D_1)$ as $D'_1\rightarrow D_1$ and $D'_2\rightarrow D_2$. Actually, if $l_\mathrm{tot}$ is the total length of $\Gamma$, we have
\begin{align*}
\Sfunc(D'_2-D'_1)-\Sfunc(D_2-D_1)&=\Sfunc((D'_2-D_2)+(D_2-D_1)+(D_1-D'_1))-\Sfunc(D_2-D_1) \\
&\leqslant\Sfunc(D'_2-D_2)+\Sfunc(D_1-D'_1) \\
&\leqslant(\rho(D_2,D'_2)+\rho(D_1,D'_1))l_\mathrm{tot}
\end{align*}
and
\begin{align*}
\Sfunc(D_2-D_1)-\Sfunc(D'_2-D'_1)&=\Sfunc((D_2-D'_2)+(D'_2-D'_1)+(D'_1-D_1))-\Sfunc(D'_2-D'_1) \\
&\leqslant\Sfunc(D_2-D'_2)+\Sfunc(D'_1-D_1) \\
&\leqslant(\rho(D_2,D'_2)+\rho(D_1,D'_1))l_\mathrm{tot}.
\end{align*}
\end{proof}

\subsection{General reduced divisors}
\begin{thm} \label{thm:main}
Let $T\subseteq\RDivPlusD(\Gamma)$ be tropically convex and compact. For every $E\in\RDivPlusD(\Gamma)$, there exists a unique $\mR$-divisor $T_E\in T$, which minimizes $\Sfunc_{\star-E}|_T$.
\end{thm}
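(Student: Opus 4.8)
The plan is to split the statement into existence, which is immediate, and uniqueness, which rests on one structural observation about tropical segments.

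First, for existence I would simply invoke compactness: the function $\Sfunc_{\star-E}$ is continuous on $\RDivPlusD(\Gamma)$ by Lemma~\ref{lem:Sfunc}(3), and $T$ is compact by hypothesis, so $\Sfunc_{\star-E}|_T$ attains a minimum value $m$ at some $T_E\in T$. The entire content of the theorem is therefore the uniqueness of the minimizer.

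For uniqueness, suppose $D_1,D_2\in T$ both attain $m$ and, toward a contradiction, that $D_1\neq D_2$. I would first pass to normalized potentials $f_i=\normalize(f_{D_i-E})$, so that $f_i\geqslant 0$, $\min f_i=0$, and $\int_\Gamma f_i=\Sfunc(D_i-E)=m$. Setting $g=f_2-f_1$, the linearity of $\Delta$ gives $\Delta g=\delta_{D_2}-\delta_{D_1}$, so $g$ is an associated function of $D_2-D_1$ with $\int_\Gamma g=0$; since $f_1$ and $f_2$ each vanish somewhere, $\min g\leqslant 0\leqslant\max g$. The idea is then to look at the lower envelope $h:=\min(f_1,f_2)=f_1+\min(0,g)$, which satisfies $h\geqslant 0$ and $\min h=0$, and to show that the divisor $D_h$ defined by $\delta_{D_h}:=\Delta h+\delta_E$ lies on the t-segment $\imag(P_{D_2-D_1})$ and hence, by t-convexity, in $T$.

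The hard part is exactly this identification of the lower envelope with a point of the t-segment. Writing $c=-\min g\geqslant 0$ and $\rho=\rho(D_1,D_2)=\max(\normalize(g))=\max g+c$, one checks that $\min(c,\normalize(g))=\min(c,g+c)=c+\min(0,g)$ differs from $\min(0,g)$ only by the constant $c$; comparing with the defining formula $\normalize(f_{D(t)-D_1})=\min(t\rho,\normalize(g))$ of the t-path $D(t):=P_{D_2-D_1}(t)$, this shows $D(c/\rho)=\Delta\min(0,g)+D_1=D_h$, with $c/\rho\in[0,1]$ because $\max g\geqslant 0$. In particular $D_h$ is automatically a genuine effective $\mR$-divisor of degree $d$ on the segment. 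Once $D_h\in T$ is in hand the contradiction is routine: since $\int_\Gamma g=0$ and $g\not\equiv 0$, the open set $\{g<0\}$ has positive measure, so $\Sfunc(D_h-E)=\int_\Gamma h=m+\int_{\{g<0\}}g<m$, contradicting the minimality of $m$; hence $D_1=D_2$. I expect the only delicate bookkeeping to be keeping the additive constants straight when matching $h$ to the t-path parametrization, the rest being forced by the additivity of $\Delta$ and the definition of $\Sfunc$.
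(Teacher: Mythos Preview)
Your argument is correct and in fact more economical than the paper's for the purpose of this theorem alone. Both proofs handle existence identically via continuity of $\Sfunc_{\star-E}$ and compactness of $T$; for uniqueness, both show that two distinct minimizers $D_1,D_2$ would force a strictly smaller $\Sfunc$-value somewhere on the t-segment $\tconv(D_1,D_2)\subseteq T$. The difference is in how that smaller value is exhibited. The paper proceeds through Proposition~\ref{prop:working}, a dichotomy governing the behavior of $t\mapsto\Sfunc(P_{D_2-D_1}(t)-E)$ along the whole t-path (either strictly increasing on all of $[0,1]$, or strictly decreasing at $t=0$), and then observes in Remark~\ref{rmk:working}(3)--(4) that equal endpoint values force strictly smaller interior values. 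You bypass this analysis by directly identifying the divisor attached to the pointwise minimum $h=\min(f_1,f_2)$ with the single t-path point $D(c/\rho)$ and computing $\Sfunc(D_h-E)=\int_\Gamma h<m$ in one stroke. Your route is more elementary and self-contained; the paper's route, though heavier, produces the structural Proposition~\ref{prop:working}, which is immediately reused to derive the equivalent criterions for reduced divisors (Corollary~\ref{cor:reduced}) and is invoked repeatedly thereafter, so the extra investment is not wasted in context.
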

According to Lemma~\ref{lem:Sfunc}~(3), $\Sfunc_{\star-E}$ is a continuous function. Since $T$ is compact, $\Sfunc_{\star-E}|_T$ can reach its minimal value. Hence, it only remains to show that the minimum can only be reached at a single divisor in $T$. We will finish our proof of Theorem~\ref{thm:main} in Remark~\ref{rmk:working} after proving some useful facts in Proposition~\ref{prop:working}. Provided this theorem, we are now ready to bring up a central notion of this paper.

\begin{dfn} \label{dfn:GRD:same_deg}
Under the hypotheses of Theorem~\ref{thm:main}, we say the divisor $T_E$ is the \emph{(general) reduced divisor} in $T$ with respect to $E$ (or the $E$-reduced divisor in $T$).
\end{dfn}

\begin{rmk}
For $D\in\DivPlusD(\Gamma)$ and $q\in\Gamma$, Baker and Shokrieh \cite{BF12} showed that a conventional reduced divisor $D_q$ is the unique divisor in the complete linear system $|D|$ such that the $b$-function $b_q(D)$ is minimized. Note that $|D|$ is compact (Remark~\ref{rmk:complete_compact}) and we may express the $b$-function by an equivalent $\Sfunc$-function (Remark~\ref{rmk:bfunction}). Hence if we let $T=|D|$ and $E=d\cdot(q)$, the conventional reduced divisors fit well in our new setting by the identity $D_q=|D|_{d\cdot(q)}$.
\end{rmk}

\begin{rmk}
Throughout this paper, when we mention reduced divisors, we mean general reduced divisors unless otherwise stated.
\end{rmk}

\begin{prop} \label{prop:working}
Let $E,D_1,D_2\in\RDivPlusD(\Gamma)$ and $D_1\neq D_2$. Let $P_{D_2-D_1}$ be the t-path from $D_1$ to $D_2$. Let $D(t) = P_{D_2-D_1}(t)$ for $t\in[0,1]$. Consider the functions $g_\rho(t) = \rho(E,D(t))$ and $g_{\Sfunc}(t)=\Sfunc(D(t)-E)$ for $t\in[0,1]$. Then exactly one of the following two cases occur:
\begin{enumerate}
\item $\Gmin(f_{D_1-E})\bigcap\Gmin(f_{D_2-D_1})\neq\emptyset$. In this case, $g_\rho(t)$ is increasing and $g_{\Sfunc}(t)$ is strictly increasing for $t\in[0,1]$. And precisely, for $t\in(0,1]$, we have
    \begin{align*}
    \Gmin(f_{D(t)-E})&=\Gmin(f_{D(t)-D_1})\bigcap\Gmin(f_{D_1-E})\\
    =\Gmin(f_{D_2-E})&=\Gmin(f_{D_2-D_1})\bigcap\Gmin(f_{D_1-E})
    \end{align*}
    and $g_{\Sfunc}(t)=\Sfunc(D(t)-D_1)+\Sfunc(D_1-E)$.
\item $\Gmin(f_{D_1-E})\bigcap\Gmin(f_{D_2-D_1})=\emptyset$. In this case, at $t=0$, $g_\rho(t)$ is decreasing and $g_{\Sfunc}(t)$ is strictly decreasing.
\end{enumerate}
\end{prop}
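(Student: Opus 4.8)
The plan is to analyze the two cases by tracking how the minimizer set $\Gmin(f_{D(t)-E})$ evolves along the t-path, using Remark~\ref{rmk:Gminmax} to control $\Gmin(f_{D(t)-D_1})$ and the additivity criterion from Lemma~\ref{lem:Sfunc}~(1) to decide when the $\Sfunc$-function splits as a sum. Throughout I would fix the normalized associated functions $f_{D_1-E}=\normalize(f_{D_1-E})$ and $f_{D_2-D_1}=\normalize(f_{D_2-D_1})$, and write $f_{D(t)-D_1}=\min(tl,f_{D_2-D_1})$ with $l=\rho(D_1,D_2)$ as in Remark~\ref{rmk:Gminmax}, so that $f_{D(t)-E}=f_{D(t)-D_1}+f_{D_1-E}$ is an associated function of $D(t)-E$. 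Note that the two stated hypotheses are exact complements, so it suffices to establish the two sets of conclusions under their respective assumptions.

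In Case (1), I would apply the equality condition of Lemma~\ref{lem:Sfunc}~(1) with the triple $(E,D_1,D(t))$. By Remark~\ref{rmk:Gminmax}~(1) we have $\Gmin(f_{D(t)-D_1})=\Gmin(f_{D_2-D_1})$ for all $t\in(0,1]$, so the hypothesis $\Gmin(f_{D_1-E})\cap\Gmin(f_{D_2-D_1})\neq\emptyset$ gives $\Gmin(f_{D_1-E})\cap\Gmin(f_{D(t)-D_1})\neq\emptyset$ for every such $t$. The equality clause of Lemma~\ref{lem:Sfunc}~(1) then yields both the additive formula $g_\Sfunc(t)=\Sfunc(D(t)-D_1)+\Sfunc(D_1-E)$ and the intersection description of $\Gmin(f_{D(t)-E})$; specializing to $t=1$ identifies these with the $D_2$ versions. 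Strict monotonicity of $g_\Sfunc$ then reduces to strict monotonicity of $t\mapsto\Sfunc(D(t)-D_1)$, which follows since by Lemma~\ref{lem:tpath}~(2) the t-path is an isometry and $\Sfunc(D(t)-D_1)$ grows with $\rho(D_1,D(t))=tl$; monotonicity (not necessarily strict) of $g_\rho$ I would get from the triangle inequality, checking that the relevant $\Gmin$'s intersect so that $\rho(E,D(t))=\rho(E,D_1)+\rho(D_1,D(t))$ along the path.

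In Case (2), the hypothesis is a statement about the germ of the path at $t=0$, so I would compute one-sided derivatives at $t=0^+$. Here the point is that $\Gmax(f_{D(t)-D_1})=f_{D_2-D_1}^{-1}([tl,l])$ shrinks continuously from $\overline{\Gmin(f_{D_2-D_1})^c}$ as $t$ grows (Remark~\ref{rmk:Gminmax}~(2)), and since $\Gmin(f_{D_1-E})$ is disjoint from $\Gmin(f_{D_2-D_1})$, for small $t$ the minimum of $f_{D(t)-E}=\min(tl,f_{D_2-D_1})+f_{D_1-E}$ is attained on $\Gmin(f_{D_1-E})$, where $f_{D_2-D_1}>0$ and hence $f_{D(t)-D_1}$ is locally increasing linearly in $t$. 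I would argue that subtracting off the normalization constant $\min(f_{D(t)-E})$, which grows like $tl$ on the region carrying the minimum, forces $g_\Sfunc$ to strictly decrease, and correspondingly that $\Gmax$ and $\Gmin$ of $f_{D(t)-E}$ move so that $g_\rho$ decreases; the cleanest route is to differentiate $g_\Sfunc(t)=\int_\Gamma f_{D(t)-E}-l_{\mathrm{tot}}\min(f_{D(t)-E})$ and show the second term dominates the first at $t=0^+$.

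The main obstacle I anticipate is Case (2): unlike Case (1), it is a purely infinitesimal claim, and the monotonicity of $g_\rho$ and $g_\Sfunc$ there need not persist for large $t$, so I must be careful to localize all estimates to a right-neighborhood of $0$ and to justify the continuity of the shrinking of $\Gmax(f_{D(t)-D_1})$ asserted in Remark~\ref{rmk:Gminmax}~(2). In particular, pinning down exactly where $\min(f_{D(t)-E})$ is attained for small $t$, and verifying that this minimizing region lies strictly inside the set where $f_{D(t)-D_1}$ has positive slope in $t$, is the delicate step that drives the strict decrease.
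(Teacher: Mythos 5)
Your overall strategy coincides with the paper's: normalize $f_{D_1-E}$ and $f_{D_2-D_1}$, write $f_{D(t)-E}=f_{D(t)-D_1}+f_{D_1-E}$ with $f_{D(t)-D_1}=\min(tl,f_{D_2-D_1})$, use Remark~\ref{rmk:Gminmax}~(1) and the equality clause of Lemma~\ref{lem:Sfunc}~(1) in Case~(1), and the identity $\min(f_{D(t)-E})=tl$ for small $t$ in Case~(2). Case~(2) as you sketch it is sound, provided you actually carry out the uniform separation of the sublevel sets $f_{D_2-D_1}^{-1}([0,\delta])$ and $f_{D_1-E}^{-1}([0,\delta])$ for $\delta\leqslant\delta_0$ (which is exactly what the paper does); and once $\min(f_{D(t)-E})=tl$ is known, $\normalize(f_{D(t)-E})=f_{D_1-E}-\max(0,tl-f_{D_2-D_1})$ is pointwise non-increasing in $t$, which gives the decrease of $g_\rho$ directly, with no further analysis of where the max and min are attained.

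The genuine gap is your argument for the monotonicity of $g_\rho$ in Case~(1). You propose to derive it from the equality $\rho(E,D(t))=\rho(E,D_1)+\rho(D_1,D(t))$, but the equality condition for the triangle inequality of $\rho$ (Section~\ref{S:metricstructure}) requires \emph{both} $\Gmin(f_{D_1-E})\bigcap\Gmin(f_{D(t)-D_1})\neq\emptyset$ \emph{and} $\Gmax(f_{D_1-E})\bigcap\Gmax(f_{D(t)-D_1})\neq\emptyset$; the hypothesis of Case~(1) supplies only the first. The second can fail (for instance when $\Gmax(f_{D_1-E})$ sits inside $\Gmin(f_{D_2-D_1})$), and if your equality held it would force $g_\rho(t)=\rho(E,D_1)+tl$ to be strictly increasing and affine, contradicting Remark~\ref{rmk:working}~(1), which records that $g_\rho$ can be locally constant precisely in Case~(1). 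The correct route, and the one the paper takes, is pointwise: $\min(f_{D(t)-E})=0$ for all $t$ because the two $\Gmin$'s meet, while $\max(f_{D(t)-E})=\max(f_{D(t)-D_1}+f_{D_1-E})$ is non-decreasing simply because $f_{D(t)-D_1}(v)=\min(tl,f_{D_2-D_1}(v))$ is non-decreasing in $t$ at every $v\in\Gamma$. Everything else in your Case~(1) --- the $\Gmin$ identities, the additivity of $\Sfunc$, and hence the strict increase of $g_{\Sfunc}$ --- is fine.
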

\begin{rmk}
We say a function $f(t)$ is increasing (resp. decreasing, strictly increasing, strictly decreasing, or locally constant) at $t_0$ if there exists $\delta>0$ such that $g(t)$ is increasing (resp. decreasing, strictly increasing, strictly decreasing, or constant) on $[t_0,t_0+\delta]$. Note that we adopt the usual definition of increasing (resp. decreasing) functions here, which actually means non-decreasing (resp. non-increasing).
\end{rmk}
\begin{proof}
Let $l=\rho(D_1,D_2)$. For simplicity of notations, we assume $$\min(f_{D_1-E})=\min(f_{D_2-D_1})=\min(f_{D(t)-D_1})=0$$ from now on. It then follows $f_{D(t)-D_1}=\min(tl,f_{D_2-D_1})$. In addition, we let $f_{D_2-E}=f_{D_2-D_1}+f_{D_1-E}$, which is associated to $D_2-E$, and  $f_{D(t)-E}=f_{D(t)-D_1}+f_{D_1-E}$, which is associated to $D(t)-E$.

If $$\Gmin(f_{D_1-E})\bigcap\Gmin(f_{D_2-D_1})\neq\emptyset,$$ then we have
$$\Gmin(f_{D_2-E})=\Gmin(f_{D_2-D_1})\bigcap\Gmin(f_{D_1-E})$$ and
$$\min(f_{D_2-E})=\min(f_{D_2-D_1}+f_{D_1-E})=\min(f_{D_2-D_1})+\min(f_{D_1-E})=0.$$
By Remark~\ref{rmk:Gminmax}~(1), $\Gmin(f_{D(t)-D_1})=\Gmin(f_{D_2-D_1})$ for $t\in(0,1]$.
Therefore,
$$\Gmin(f_{D(t)-E})=\Gmin(f_{D(t)-D_1})\bigcap\Gmin(f_{D_1-E})=\Gmin(f_{D_2-D_1})\bigcap\Gmin(f_{D_1-E})\neq\emptyset$$
and
$$\min(f_{D(t)-E})=\min(f_{D(t)-D_1}+f_{D_1-E})=\min(f_{D(t)-D_1})+\min(f_{D_1-E})=0$$ for $t\in[0,1]$.
On the other hand, $\max(f_{D(t)-E})$ is an increasing function since $\max(f_{D(t)-E})=\max(f_{D(t)-D_1}+f_{D_1-E})$ and the value of $f_{D(t)-D_1}(v)$ at any point $v\in\Gamma$ is an increasing function with respect to $t$.
Therefore, $g_\rho(t)$ is also an increasing function since $g_\rho(t)=\max(f_{D(t)-E})-\min(f_{D(t)-E})=\max(f_{D(t)-E})$.
Moreover, it follows from Lemma~\ref{lem:Sfunc} that
\begin{align*}
g_{\Sfunc}(t)&=\Sfunc(D(t)-E)\\
&=\Sfunc(D(t)-D_1)+\Sfunc(D_1-E).
\end{align*}
Therefore $g_{\Sfunc}(t)$ is strictly increasing for $t\in[0,1]$ since $\Sfunc(D(t)-D_1)$ is strictly increasing.

Now consider the case $\Gmin(f_{D_2-D_1})\bigcap\Gmin(f_{D_1-E})=\emptyset$.
Note that both $f_{D_2-D_1}^{-1}([0,\delta])$ and $f_{D_1-E}^{-1}([0,\delta])$ are closed subsets of $\Gamma$ with finitely many connected components, and for a small enough positive $\delta_0$, both $f_{D_2-D_1}^{-1}([0,\delta])$ and $f_{D_1-E}^{-1}([0,\delta])$ expand continuously as $\delta$ increases in $[0,\delta_0]$. In particular,
we have $$\lim_{\delta\searrow0}f_{D_2-D_1}^{-1}([0,\delta])=\Gmin(f_{D_2-D_1})$$ and $$\lim_{\delta\searrow0}f_{D_1-E}^{-1}([0,\delta])=\Gmin(f_{D_1-E}).$$ Hence we may even choose $\delta_0$ such that $$f_{D_2-D_1}^{-1}([0,\delta])\bigcap f_{D_1-E}^{-1}([0,\delta]) =\emptyset$$ for all $\delta\in[0,\delta_0]$.
Then for $t\in[0,\delta_0/l]$, we have
\begin{itemize}
  \item $f_{D(t)-D_1}=tl$, $f_{D_1-E}(v)=0$ and $f_{D(t)-E}(v)=tl$ if $v\in\Gmin(f_{D_1-E})$;
  \item $f_{D(t)-D_1}\geqslant0$, $f_{D_1-E}(v)\geqslant tl$ and $f_{D(t)-E}(v)\geqslant tl$ if $v\in f_{D_2-D_1}^{-1}([0,tl])$; and
  \item $f_{D(t)-D_1}=tl$, $f_{D_1-E}(v)>0$ and $f_{D(t)-E}(v)>tl$ if $v\in(f_{D_2-D_1}^{-1}([0,tl])\bigcup\Gmin(f_{D_1-E}))^c$.
\end{itemize}
Therefore, we conclude $\min(f_{D(t)-E})=tl$ and $\Gmin(f_{D_1-E})\subseteq\Gmin(f_{D(t)-E})$ for $t\in[0,\delta_0/l]$.
Let $f_{D_1-D(t)}=\rho(D_1,D(t))-f_{D(t)-D_1}$, and we have $\min(f_{D_1-D(t)})=0$ and the value of $f_{D_1-D(t)}(v)$ at any point $v\in\Gamma$ is an increasing function with respect to $t$.
Then for $t\in[0,\delta_0/l]$,
\begin{align*}
\normalize(f_{D(t)-E})&=f_{D(t)-E}-tl \\
&=f_{D_1-E}+f_{D(t)-D_1}-\rho(D_1,D(t)) \\
&=f_{D_1-E}-f_{D_1-D(t)}.
\end{align*}
Note that $g_\rho(t)=\max(\normalize(f_{D(t)-E}))$, which means $g_\rho(t)$ is decreasing for $t\in[0,\delta_0/l]$. Thus $g_\rho(t)$ is decreasing at $t=0$.
Moreover,
\begin{align*}
g_{\Sfunc}(t)&=\Sfunc(D(t)-E)\\
&=\int_\Gamma\normalize(f_{D(t)-E}) \\
&=\int_\Gamma(f_{D_1-E}-f_{D_1-D(t)}) \\
&=\Sfunc(D_1-E)-\Sfunc(D_1-D(t)),
\end{align*}
for $t\in[0,\delta_0/l]$. This means $g_{\Sfunc}(t)$ is strictly decreasing for $t\in[0,\delta_0/l]$ since $\Sfunc(D_1-D(t))$ is strictly increasing. Thus $g_{\Sfunc}(t)$ is strictly decreasing at $t=0$.
\end{proof}

\begin{rmk} \label{rmk:working}
We observe some easy facts following from Proposition~\ref{prop:working}.
\begin{enumerate}
\item $g_\rho(t)$ can be locally constant, while $g_{\Sfunc}(t)$ cannot.
\item If $g_\rho(t)$ is \emph{strictly increasing} at $t=0$, then $g_\rho(t)$ is \emph{increasing} on $[0,1]$. If $g_{\Sfunc}(t)$ is \emph{strictly increasing} at $t=0$, then $g_{\Sfunc}(t)$ is \emph{strictly increasing} on $[0,1]$.
\item Recall that we've assumed $D_1\neq D_2$. If $g_\rho(0)=g_\rho(1)=\kappa_\rho$, then $g_{\Sfunc}(t)$ is decreasing at $t=0$ (locally constant is possible) and $g_\rho(t)\leqslant\kappa_\rho$ for $t\in(0,1)$. If $g_{\Sfunc}(0)=g_{\Sfunc}(1)=\kappa_{\Sfunc}$, then $g_{\Sfunc}(t)$ is strictly decreasing at $t=0$ and $g_{\Sfunc}(t)<\kappa_{\Sfunc}$ for $t\in(0,1)$.
\item We can finish \textbf{the proof of Theorem~\ref{thm:main}} now. If there exist divisors $D_1$ and $D_2$ in $T$, both minimizing $\Sfunc_{\star-E}|_{D\in T}$, then we must have $D_1=D_2$ by (3).
\item By applying Proposition~\ref{prop:working} to the t-paths from $D_1$ to $D_2$ and from $D_1$ to $D_2$ respectively, we see that $$\Gmin(f_{D_2-D_1})\bigcap\Gmin(f_{D_1-E})\neq\emptyset$$ implies $$\Gmin(f_{D_1-D_2})\bigcap\Gmin(f_{D_2-E})=\emptyset$$ (still under the assumption $D_1\neq D_2$).
\end{enumerate}
\end{rmk}


Proposition~\ref{prop:working} can actually provide us with criterions of reduced divisors from different aspects, as summarized in the following corollary.
\begin{cor}[\textbf{Criterions for general reduced divisors}] \label{cor:reduced}
Let $T\subseteq\RDivPlusD(\Gamma)$ be tropically convex and compact.
Let $E\in\RDivPlusD(\Gamma)$ and $D_0\in T$. The following properties are equivalent.
\begin{enumerate}
\item $D_0$ is the $E$-reduced divisor of $T$.
\item For every $D\in T$ and $t\in[0,1]$, the function $\Sfunc(P_{D-D_0}(t)-E)$ is strictly increasing.
\item For every $D\in T$ and $t\in[0,1]$, the function $\Sfunc(P_{D-D_0}(t)-E)$ is strictly increasing at $t=0$. (Equivalently, we say $\Sfunc_{\star-E}$ is strictly increasing at $D_0$ along all possible firing directions.)
\item For every $D\in T$, $$\Gmin(f_{D-D_0})\bigcap\Gmin(f_{D_0-E})\neq\emptyset.$$
\item For every $D\in T$, $$\Gmin(f_{D-E})=\Gmin(f_{D-D_0})\bigcap\Gmin(f_{D_0-E}).$$
\item For every $D\in T$, $$\Sfunc(D-E)=\Sfunc(D-D_0)+\Sfunc(D_0-E).$$
\item For every $D\in T$ and $D\neq D_0$, $$\Gmin(f_{D_0-D})\bigcap\Gmin(f_{D-E})=\emptyset.$$
\end{enumerate}
\end{cor}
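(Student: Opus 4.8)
The plan is to establish the cycle of equivalences by leaning heavily on Proposition~\ref{prop:working} and Lemma~\ref{lem:Sfunc}, which already package almost all the analytic content. The key observation is that the distinction between the two cases in Proposition~\ref{prop:working} — whether $\Gmin(f_{D_1-E})\cap\Gmin(f_{D_2-D_1})$ is empty or not — is exactly the local-monotonicity dichotomy for $g_{\Sfunc}$ at $t=0$. So statements (3), (4), and (7) are just three phrasings of the same case-hypothesis applied to the t-path from $D_0$ to an arbitrary $D\in T$ (for (7) one uses Remark~\ref{rmk:working}~(5) to flip the roles). I would prove the equivalences in roughly the order $(1)\Leftrightarrow(2)\Leftrightarrow(3)$, then $(3)\Leftrightarrow(4)$, then fold in $(5)$, $(6)$, $(7)$ as reformulations tied to Lemma~\ref{lem:Sfunc}~(1).

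First I would handle $(2)\Rightarrow(1)$ and $(1)\Rightarrow(3)$, the two ``ends'' that pin the condition to the minimizer. For $(2)\Rightarrow(1)$: if $\Sfunc(P_{D-D_0}(t)-E)$ is strictly increasing in $t$ for every $D\in T$, then in particular $\Sfunc(D-E)=\Sfunc(P_{D-D_0}(1)-E)>\Sfunc(P_{D-D_0}(0)-E)=\Sfunc(D_0-E)$ whenever $D\neq D_0$, so $D_0$ uniquely minimizes $\Sfunc_{\star-E}|_T$ and is the $E$-reduced divisor by Definition~\ref{dfn:GRD:same_deg}. For $(1)\Rightarrow(3)$ I argue by contrapositive: if (3) fails, there is some $D\in T$ along which $\Sfunc(P_{D-D_0}(t)-E)$ is \emph{not} strictly increasing at $t=0$; by the dichotomy in Proposition~\ref{prop:working} this forces Case~(2), i.e.\ $g_{\Sfunc}$ is strictly \emph{decreasing} at $t=0$, so some $P_{D-D_0}(t)\in T$ (using t-convexity of $T$) has strictly smaller $\Sfunc_{\star-E}$ value than $D_0$, contradicting minimality. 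Then $(3)\Rightarrow(2)$ is precisely Remark~\ref{rmk:working}~(2): strict increase at $t=0$ upgrades to strict increase on all of $[0,1]$.

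Next, $(3)\Leftrightarrow(4)$ is immediate from Proposition~\ref{prop:working}: Case~(1) (the hypothesis $\Gmin(f_{D-D_0})\cap\Gmin(f_{D_0-E})\neq\emptyset$, which is exactly (4) with $D_1=D_0$, $D_2=D$) is the one where $g_{\Sfunc}$ is strictly increasing, and Case~(2) is the complementary one, so the two conditions coincide for each fixed $D$. For $(4)\Leftrightarrow(5)\Leftrightarrow(6)$ I invoke Lemma~\ref{lem:Sfunc}~(1) with the triple $(E,D_0,D)$: the equality $\Sfunc(D-E)=\Sfunc(D-D_0)+\Sfunc(D_0-E)$ holds iff $\Gmin(f_{D-D_0})\cap\Gmin(f_{D_0-E})\neq\emptyset$ iff $\Gmin(f_{D-E})=\Gmin(f_{D-D_0})\cap\Gmin(f_{D_0-E})$, which is verbatim the equivalence of (6),(4),(5). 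Finally $(4)\Leftrightarrow(7)$ follows from Remark~\ref{rmk:working}~(5), which states exactly that (for $D\neq D_0$) the nonempty-intersection condition on the $D_0$-side is equivalent to the empty-intersection condition $\Gmin(f_{D_0-D})\cap\Gmin(f_{D-E})=\emptyset$ obtained by reversing the t-path.

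The main obstacle I anticipate is purely bookkeeping rather than conceptual: one must be careful that the ``strictly increasing at $t=0$'' local statements genuinely suffice to certify a \emph{global} minimizer, i.e.\ that there is no $D\in T$ witnessing a decrease that is invisible to the $t=0$ derivative. This is exactly where Proposition~\ref{prop:working}'s clean dichotomy (every t-path out of $D_0$ is classified into the strictly-increasing case or a case with strict \emph{initial} decrease, with no ``flat then decreasing'' escape for $g_{\Sfunc}$, by Remark~\ref{rmk:working}~(1)) does the heavy lifting, and the t-convexity of $T$ guarantees the intermediate divisors $P_{D-D_0}(t)$ actually lie in $T$ so that the comparison is legitimate.
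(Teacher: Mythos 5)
Your proposal is correct and takes essentially the same approach as the paper, whose entire proof is the single sentence that all the criterions follow easily from Proposition~\ref{prop:working}; you have simply supplied the details via Proposition~\ref{prop:working}, Lemma~\ref{lem:Sfunc}~(1), and Remark~\ref{rmk:working}. (One small caveat: Remark~\ref{rmk:working}~(5) gives only the implication from (4) to (7), not a pointwise equivalence, but the cycle still closes since (7) applied to $D=T_E$ forces $D_0=T_E$ and hence (1).)
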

\begin{proof}
All the criterions easily follows from Proposition~\ref{prop:working}.
\end{proof}

\subsection{Some properties of general reduced divisors}
Unless otherwise stated, we let $T\subseteq\RDivPlusD(\Gamma)$ be tropically convex and compact in the following discussions.
\begin{lem} \label{lem:identity}
If $E\in T$, then $T_E=E$.
\end{lem}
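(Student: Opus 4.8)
The plan is to verify that $E$ itself satisfies one of the reduced-divisor criterions from Corollary~\ref{cor:reduced}, which by the uniqueness in Theorem~\ref{thm:main} forces $T_E = E$. The cleanest criterion to check is criterion~(4): for every $D \in T$, we need
$$\Gmin(f_{D-E})\bigcap\Gmin(f_{E-E})\neq\emptyset.$$
Here $D_0 = E$, so the relevant function is $f_{D_0 - E} = f_{E-E}$, which is associated to the zero divisor $E - E = 0$. Since $\Delta(\text{constant}) = 0$, the associated function $f_{E-E}$ is constant on all of $\Gamma$, and hence $\Gmin(f_{E-E}) = \Gamma$. The intersection in criterion~(4) then becomes $\Gmin(f_{D-E})\bigcap\Gamma = \Gmin(f_{D-E})$, which is nonempty because any continuous function on the compact graph $\Gamma$ attains its minimum. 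Thus criterion~(4) holds trivially.

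First I would note explicitly that $E \in T$ is given by hypothesis, so $E$ is a legitimate candidate to be the $E$-reduced divisor of $T$. Then I would compute $f_{E-E}$: since its Laplacian is the zero measure, it lies in the constant functions $\mR \subset \CPA$, so $\normalize(f_{E-E}) \equiv 0$ and $\Gmin(f_{E-E}) = \Gamma$. Feeding this into criterion~(4) of Corollary~\ref{cor:reduced} with $D_0 = E$, the condition $\Gmin(f_{D-E}) \cap \Gmin(f_{E-E}) \neq \emptyset$ reduces to $\Gmin(f_{D-E}) \neq \emptyset$, which always holds. Since all seven criterions in Corollary~\ref{cor:reduced} are equivalent, $E$ satisfies criterion~(1), i.e.\ $E$ is the $E$-reduced divisor of $T$. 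By the uniqueness asserted in Theorem~\ref{thm:main}, $T_E = E$.

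Alternatively, and perhaps even more transparently, I could argue directly from the minimization definition: $\Sfunc_{\star-E}(E) = \Sfunc(E-E) = \int_\Gamma \normalize(f_{E-E}) = \int_\Gamma 0 = 0$. Since $\Sfunc$ takes values in $\mR_+$ and $\Sfunc(D-E) = 0$ if and only if $\rho(D,E)=0$ if and only if $D = E$ (by the properties of $\Sfunc$ recorded just before Remark~\ref{rmk:bfunction}), the value $0$ is the strict global minimum of $\Sfunc_{\star-E}$, attained only at $E$. As $E \in T$, the minimizer of $\Sfunc_{\star-E}|_T$ is exactly $E$, so $T_E = E$.

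There is essentially no obstacle here; the statement is a sanity check confirming that the general reduced divisor construction restricts correctly when the reference divisor $E$ already lies in the tropical convex set. The only point requiring the slightest care is the observation that $f_{E-E}$ is constant — this is immediate from the fact that the Laplacian has kernel exactly the constants (property~(1) of the Laplacian in Section~\ref{S:potential}) — and the fact that $\Sfunc$ vanishes only on the diagonal, both of which are already established in the excerpt.
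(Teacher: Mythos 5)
Your proof is correct, and your first argument is exactly what the paper intends: the paper skips the proof of this lemma, remarking only that it ``can be verified using any criterion of reduced divisors in Corollary~\ref{cor:reduced},'' and your verification of criterion~(4) via $\Gmin(f_{E-E})=\Gamma$ is a clean instance of that. Your second argument is worth noting as even more economical: since $\Sfunc_{\star-E}$ is nonnegative and $\Sfunc(D-E)=0$ if and only if $D=E$ (a fact recorded when the $\Sfunc$-function is introduced), the divisor $E\in T$ is the unique global minimizer of $\Sfunc_{\star-E}$, so it is in particular the unique minimizer on $T$; this bypasses Corollary~\ref{cor:reduced} and even the uniqueness machinery of Theorem~\ref{thm:main} entirely. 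Either route is complete and correct.
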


\begin{lem} \label{lem:red_on_subconv}
Let $T'$ be a compact tropical convex subset of $T$. For $E\in\RDivPlusD(\Gamma)$, if $T_E\in T'$, then $T'_E=T_E$.
\end{lem}

The easy facts as stated in the above two lemmas can be verified using any criterion of reduced divisors in Corollary~\ref{cor:reduced}, and we skip the detailed proofs.

\begin{lem} \label{lem:red_fiber}
Let $E'\in\tconv(E,T_E)$. Then $T_{E'}=T_{E}$.
\end{lem}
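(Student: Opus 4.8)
Let $E' \in \tconv(E, T_E)$. Then $T_{E'} = T_E$.

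The plan is to prove that $T_E$ is itself the $E'$-reduced divisor of $T$ and then conclude by the uniqueness in Theorem~\ref{thm:main}. Writing $D_0 = T_E$, I would work entirely through the minimizer criterion Corollary~\ref{cor:reduced}~(4): to show $D_0 = T_{E'}$ it suffices to check that $\Gmin(f_{D-D_0}) \cap \Gmin(f_{D_0-E'}) \neq \emptyset$ for every $D \in T$. Since $D_0 = T_E$ already gives $\Gmin(f_{D-D_0}) \cap \Gmin(f_{D_0-E}) \neq \emptyset$ for all such $D$, the whole statement collapses to the single inclusion $\Gmin(f_{D_0-E}) \subseteq \Gmin(f_{D_0-E'})$.

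To get this inclusion, I would first clear the degenerate case $E = D_0$, which by Lemma~\ref{lem:identity} is exactly the case $E \in T$; then $\tconv(E, T_E)$ is a single point, $E' = E$, and there is nothing to prove. Otherwise $E \neq D_0$, and I consider the t-path $P_{D_0-E}$ from $E$ to $D_0$, normalized so that $f_{D_0-E} = \normalize(f_{D_0-E})$, with $l = \rho(E, D_0)$. Because $E' \in \tconv(E, T_E) = \imag(P_{D_0-E})$, I may write $E' = P_{D_0-E}(s)$ for some $s \in [0,1]$, and then Remark~\ref{rmk:Gminmax}~(3) (with $D_1 = E$, $D_2 = D_0$) identifies $\Gmin(f_{D_0-E'}) = f_{D_0-E}^{-1}([0, sl])$ while $\Gmin(f_{D_0-E}) = f_{D_0-E}^{-1}(\{0\})$; since $\{0\} \subseteq [0, sl]$ the inclusion is immediate, and Remark~\ref{rmk:Gminmax}~(3) also confirms these sets expand monotonically in $s$.

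Feeding the inclusion back in, for every $D \in T$ I obtain $\Gmin(f_{D-D_0}) \cap \Gmin(f_{D_0-E'}) \supseteq \Gmin(f_{D-D_0}) \cap \Gmin(f_{D_0-E}) \neq \emptyset$, so $D_0$ passes Corollary~\ref{cor:reduced}~(4) for $E'$ and hence equals $T_{E'}$. The argument is really a one-line monotonicity fact --- moving the reference divisor from $E$ toward $D_0 = T_E$ only enlarges $\Gmin(f_{D_0-\cdot})$ --- so I do not expect a substantive obstacle. The one place to be careful is the bookkeeping in Remark~\ref{rmk:Gminmax}: it is the minimizer of $f_{D_0-E'}$ (the direction toward $D_0$) that expands, and matching this against the ``$\Gmin(f_{D_0-E})$'' appearing in criterion~(4) is what makes the inclusion go the right way.
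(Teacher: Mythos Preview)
Your proof is correct and follows essentially the same route as the paper: both use Corollary~\ref{cor:reduced}(4), invoke Remark~\ref{rmk:Gminmax}(3) with $D_1=E$, $D_2=T_E$ to obtain the inclusion $\Gmin(f_{T_E-E})\subseteq\Gmin(f_{T_E-E'})$, and then conclude by reapplying the criterion. Your version is slightly more explicit (separating out the degenerate case $E=T_E$, which is needed since Remark~\ref{rmk:Gminmax} assumes $D_1\neq D_2$, and spelling out the level-set description $\Gmin(f_{D_0-E'})=f_{D_0-E}^{-1}([0,sl])$), but the argument is the same.
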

\begin{proof}
By Corollary~\ref{cor:reduced}, we have $$\Gmin(f_{D-T_E})\bigcap\Gmin(f_{T_E-E})\neq\emptyset$$ for every $D\in T$.
Taking $D_1=E$ and $D_2=T_E$ in Remark~\ref{rmk:Gminmax}~(3), we have $\Gmin(f_{T_E-E'})\supseteq\Gmin(f_{T_E-E})$, which means $$\Gmin(f_{D-T_E})\bigcap\Gmin(f_{T_E-E'})\neq\emptyset$$ for every $D\in T$. Using Corollary~\ref{cor:reduced} again, we see that $T_E$ is also $E'$-reduced in $T$.
\end{proof}

\begin{lem}
For $D_0,E,E'\in\RDivPlusD(\Gamma)$, suppose $D_0\in T$ and $E'\in\tconv(E,D_0)$. Then $E'\in\tconv(E,T_{E'})$.
\end{lem}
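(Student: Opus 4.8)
We have $D_0 \in T$, a point $E'$ on the tropical segment $\tconv(E, D_0)$, and we want to show $E' \in \tconv(E, T_{E'})$. Here $T_{E'}$ is the $E'$-reduced divisor in $T$. The geometric content is that $E'$, which already lies on the segment from $E$ toward the specific divisor $D_0 \in T$, must also lie on the segment from $E$ toward the reduced divisor $T_{E'}$. Equivalently (using Lemma~\ref{lem:clutching}), writing the three associated functions relative to $E'$, I must show $E'$ sits between $E$ and $T_{E'}$ along a t-path.

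**The plan.** The natural approach is to use the characterization of "lying on a t-segment" from Lemma~\ref{lem:clutching}: $E' \in \tconv(E, T_{E'})$ is equivalent to
$$\Gmin(f_{E - E'}) \bigcup \Gmin(f_{T_{E'} - E'}) = \Gamma.$$
So the plan is to produce this covering of $\Gamma$. First I would unpack the hypothesis $E' \in \tconv(E, D_0)$ via the same lemma, which gives $\Gmin(f_{E - E'}) \bigcup \Gmin(f_{D_0 - E'}) = \Gamma$. This already covers $\Gamma$ using $D_0$ in place of $T_{E'}$; the task is to transfer the covering from $D_0$ to the reduced divisor $T_{E'}$.

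**Key steps.** The bridge is the reduced-divisor criterion Corollary~\ref{cor:reduced}~(4) applied with the test divisor $D = D_0 \in T$ and base point $E'$: since $T_{E'}$ is $E'$-reduced in $T$, we have
$$\Gmin(f_{D_0 - T_{E'}}) \bigcap \Gmin(f_{T_{E'} - E'}) \neq \emptyset.$$
The idea is that $\Gmin(f_{T_{E'} - E'})$ meets every $\Gmin$ of the form $f_{D - T_{E'}}$. I would then argue that the part of $\Gamma$ covered by $\Gmin(f_{D_0 - E'})$ in the original covering is absorbed into $\Gmin(f_{T_{E'} - E'})$, using a triangle-type comparison among the associated functions $f_{D_0 - E'}$, $f_{D_0 - T_{E'}}$, and $f_{T_{E'} - E'}$. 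Concretely, because $T_{E'}$ is $E'$-reduced, criterion~(5) gives a precise identity for minimizers relative to $E'$; I would apply it with $D = D_0$ to rewrite $\Gmin(f_{D_0 - E'}) = \Gmin(f_{D_0 - T_{E'}}) \bigcap \Gmin(f_{T_{E'} - E'})$. This shows $\Gmin(f_{D_0 - E'}) \subseteq \Gmin(f_{T_{E'} - E'})$. Substituting this inclusion into the hypothesis covering yields
$$\Gamma = \Gmin(f_{E - E'}) \bigcup \Gmin(f_{D_0 - E'}) \subseteq \Gmin(f_{E - E'}) \bigcup \Gmin(f_{T_{E'} - E'}),$$
which is exactly the Lemma~\ref{lem:clutching} condition for $E' \in \tconv(E, T_{E'})$.

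**The main obstacle.** The delicate point is the application of criterion~(5): it is stated for $D \in T$, and here the natural test divisor is $D_0 \in T$, so that is fine, but I must make sure the base point in the criterion is $E'$ (not $E$) and that $T_{E'}$ — reduced with respect to $E'$ — is the object I feed in. I expect the main care to be in keeping the base-point bookkeeping straight, since three distinct divisors $E$, $E'$, $D_0$ and the reduced divisor $T_{E'}$ all appear, and the criteria must be invoked with the correct roles. A secondary subtlety is the degenerate case $T_{E'} = E'$ (which by Lemma~\ref{lem:identity} happens exactly when $E' \in T$), where the segment $\tconv(E, T_{E'})$ might reduce to containing $E'$ trivially; I would check that the covering argument degenerates gracefully there, since then $\Gmin(f_{T_{E'} - E'}) = \Gamma$ and the conclusion is immediate.
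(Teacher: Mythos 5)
Your proposal is correct and follows essentially the same route as the paper's proof: both derive the inclusion $\Gmin(f_{D_0-E'})\subseteq\Gmin(f_{T_{E'}-E'})$ from Corollary~\ref{cor:reduced} (your use of criterion~(5) with $D=D_0$ and base point $E'$ is exactly the right invocation), and then apply Lemma~\ref{lem:clutching} twice to transfer the covering $\Gmin(f_{E-E'})\bigcup\Gmin(f_{D_0-E'})=\Gamma$ to the desired conclusion. Your bookkeeping of the base point $E'$ and the remark on the degenerate case $E'\in T$ are both sound.
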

\begin{proof}
By Corollary~\ref{cor:reduced}, $\Gmin(f_{D_0-E'})\subseteq\Gmin(f_{T_{E'}-E'})$. By Lemma~\ref{lem:clutching},
$$\Gmin(f_{D_0-E'})\bigcup\Gmin(f_{E-E'})=\Gamma.$$ Thus $$\Gmin(f_{T_{E'}-E'})\bigcup\Gmin(f_{E-E'})=\Gamma.$$ Again, by Lemma~\ref{lem:clutching}, we have $E'\in\tconv(E,T_{E'})$.
\end{proof}

\begin{lem} \label{lem:dist_decrease}
Let $E_1,E_2\in\RDivPlusD(\Gamma)$. Then $\rho(T_{E_1},T_{E_2})\leqslant\rho(E_1,E_2)$. The equality holds if and only if $$\Sfunc(T_{E_2}-E_1)=\Sfunc(T_{E_2}-E_2)+\Sfunc(E_2-E_1)$$ and
$$\Sfunc(T_{E_1}-E_2)=\Sfunc(T_{E_1}-E_1)+\Sfunc(E_1-E_2).$$
\end{lem}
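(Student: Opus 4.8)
The plan is to exhibit the reduction map $E\mapsto T_E$ as non-expansive by translating everything into the asymmetric $\Sfunc$-functional, where the reduced-divisor additivity from Corollary~\ref{cor:reduced} becomes available. The starting observation is that since $T_{E_1}$ minimizes $\Sfunc_{\star-E_1}|_T$, criterion~(6) of Corollary~\ref{cor:reduced} gives $\Sfunc(D-E_1)=\Sfunc(D-T_{E_1})+\Sfunc(T_{E_1}-E_1)$ for \emph{every} $D\in T$; evaluating at the admissible test divisor $D=T_{E_2}\in T$ yields
\begin{equation}
\Sfunc(T_{E_2}-E_1)=\Sfunc(T_{E_2}-T_{E_1})+\Sfunc(T_{E_1}-E_1).\tag{$*$}
\end{equation}
Symmetrically, applying criterion~(6) to the $E_2$-reduced divisor $T_{E_2}$ and testing at $D=T_{E_1}\in T$ gives
\begin{equation}
\Sfunc(T_{E_1}-E_2)=\Sfunc(T_{E_1}-T_{E_2})+\Sfunc(T_{E_2}-E_2).\tag{$**$}
\end{equation}

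Next I would pass from $\rho$ to $\Sfunc$ using the identity $\Sfunc(A-B)+\Sfunc(B-A)=\rho(A,B)\,l_\mathrm{tot}$ recorded just after the definition of the $\Sfunc$-function. Applied to $A=T_{E_1}$, $B=T_{E_2}$, it lets me write $\rho(T_{E_1},T_{E_2})\,l_\mathrm{tot}=\Sfunc(T_{E_1}-T_{E_2})+\Sfunc(T_{E_2}-T_{E_1})$, and then I substitute the two bracketed terms using $(*)$ and $(**)$ to obtain
$$\rho(T_{E_1},T_{E_2})\,l_\mathrm{tot}=\bigl[\Sfunc(T_{E_1}-E_2)-\Sfunc(T_{E_2}-E_2)\bigr]+\bigl[\Sfunc(T_{E_2}-E_1)-\Sfunc(T_{E_1}-E_1)\bigr].$$
Now the two mixed terms $\Sfunc(T_{E_1}-E_2)$ and $\Sfunc(T_{E_2}-E_1)$ are bounded by routing them through $E_1$ and $E_2$ respectively with the $\Sfunc$-triangle inequality of Lemma~\ref{lem:Sfunc}~(1): namely $\Sfunc(T_{E_1}-E_2)\leqslant\Sfunc(T_{E_1}-E_1)+\Sfunc(E_1-E_2)$ and $\Sfunc(T_{E_2}-E_1)\leqslant\Sfunc(T_{E_2}-E_2)+\Sfunc(E_2-E_1)$. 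Substituting these, the terms $\pm\Sfunc(T_{E_1}-E_1)$ and $\pm\Sfunc(T_{E_2}-E_2)$ telescope away and what remains is exactly $\Sfunc(E_1-E_2)+\Sfunc(E_2-E_1)=\rho(E_1,E_2)\,l_\mathrm{tot}$, giving $\rho(T_{E_1},T_{E_2})\leqslant\rho(E_1,E_2)$.

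For the equality clause, the key point is that the only inequalities used are the two applications of Lemma~\ref{lem:Sfunc}~(1), each contributing a nonnegative slack to the final estimate; since these slacks simply add, the overall inequality is an equality if and only if both triangle inequalities are equalities, which are precisely the two stated identities $\Sfunc(T_{E_2}-E_1)=\Sfunc(T_{E_2}-E_2)+\Sfunc(E_2-E_1)$ and $\Sfunc(T_{E_1}-E_2)=\Sfunc(T_{E_1}-E_1)+\Sfunc(E_1-E_2)$. I do not anticipate a genuine obstacle here: the whole argument is a careful algebraic bookkeeping, and the one thing to watch is the asymmetry of $\Sfunc$ (the order of the difference matters), so that criterion~(6) is invoked at the correct test divisors and the triangle inequalities are oriented to make the telescoping cancellation occur. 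If one wished, the equality conditions could also be reconciled with Lemma~\ref{lem:Sfunc}~(2) to read them as an additive splitting of $\rho(E_1,E_2)$ and $\rho(T_{E_1},T_{E_2})$ along the relevant t-segments, but that is not needed for the proof itself.
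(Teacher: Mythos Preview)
Your argument is correct and essentially identical to the paper's: both apply criterion~(6) of Corollary~\ref{cor:reduced} at $D=T_{E_2}$ and $D=T_{E_1}$, convert $\rho$ to $\Sfunc$ via $\Sfunc(A-B)+\Sfunc(B-A)=\rho(A,B)l_{\mathrm{tot}}$, and finish with the same two instances of the $\Sfunc$-triangle inequality from Lemma~\ref{lem:Sfunc}(1), whose equality cases give exactly the stated conditions.
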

\begin{proof}
Let $l_\mathrm{tot}$ be the total length of $\Gamma$.
By Corollary~\ref{cor:reduced}, we have
$$\Sfunc(T_{E_2}-T_{E_1})=\Sfunc(T_{E_2}-E_1)-\Sfunc(T_{E_1}-E_1)$$ and
$$\Sfunc(T_{E_1}-T_{E_2})=\Sfunc(T_{E_1}-E_2)-\Sfunc(T_{E_2}-E_2).$$
By Lemma\ref{lem:Sfunc}, we have
$$\Sfunc(T_{E_2}-E_1)-\Sfunc(T_{E_2}-E_2)\leqslant\Sfunc(E_2-E_1)$$ and
$$\Sfunc(T_{E_1}-E_2)-\Sfunc(T_{E_1}-E_1)\leqslant\Sfunc(E_1-E_2).$$
Therefore,
\begin{align*}
&\rho(T_{E_1},T_{E_2})l_\mathrm{tot} \\
&=\Sfunc(T_{E_2}-T_{E_1})+\Sfunc(T_{E_1}-T_{E_2}) \\
&=(\Sfunc(T_{E_2}-E_1)-\Sfunc(T_{E_1}-E_1))+(\Sfunc(T_{E_1}-E_2)-\Sfunc(T_{E_2}-E_2)) \\
&=(\Sfunc(T_{E_2}-E_1)-\Sfunc(T_{E_2}-E_2))+(\Sfunc(T_{E_1}-E_2)-\Sfunc(T_{E_1}-E_1)) \\
&\leqslant\Sfunc(E_2-E_1)+\Sfunc(E_1-E_2) \\
&=\rho(E_1,E_2)l_\mathrm{tot}.
\end{align*}
\end{proof}

\begin{cor} \label{cor:dist_eq}
Let $E_1,E_2\in\RDivPlusD(\Gamma)$. If $\rho(E_1,E_2)=\rho(T_{E_1},T_{E_2})$, then for each $E\in\RDivPlusD(\Gamma)$ such that $\rho(E_1,E)+\rho(E_2,E)=\rho(E_1,E_2)$, we have $\rho(E_1,E)=\rho(T_{E_1},T_E)$ and $\rho(E_2,E)=\rho(T_{E_2},T_E)$.
\end{cor}
\begin{proof}
By Lemma\ref{lem:dist_decrease}, we get $\rho(T_{E_1},T_E)\leqslant\rho(E_1,E)$ and $\rho(T_{E_2},T_E)\leqslant\rho(E_2,E)$. Thus
$$\rho(E_1,E_2) = \rho(T_{E_1},T_{E_2})\leqslant \rho(T_{E_1},T_E)+\rho(T_{E_2},T_E)\leqslant\rho(E_1,E) + \rho(E_2,E) = \rho(E_1,E_2),$$
which implies $\rho(E_1,E)=\rho(T_{E_1},T_E)$ and $\rho(E_2,E)=\rho(T_{E_2},T_E)$.
\end{proof}

\begin{rmk}
Each divisor $E\in\tconv(E_1,E_2)$ satisfies the condition $\rho(E_1,E)+\rho(E_2,E)=\rho(E_1,E_2)$ in Corollary~\ref{cor:dist_eq}. Therefore, we must have $\rho(E_1,E)=\rho(T_{E_1},T_E)$ and $\rho(E_2,E)=\rho(T_{E_2},T_E)$. However, we should note that the set $\{T_E:E\in\tconv(E_1,E_2)\}$ is not necessarily a tropical convex set.
\end{rmk}

\begin{lem} \label{lem:RD_subspace}
Let $E\in\RDivPlusD(\Gamma)$ and $T'$ be a compact tropical convex subset of $T$. Then $T'_E=T'_{T_E}$.
\end{lem}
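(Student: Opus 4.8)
The plan is to use the $\Sfunc$-additivity criterion for general reduced divisors, namely Corollary~\ref{cor:reduced}~(6), which characterizes the $F$-reduced divisor $D_0$ of a compact tropical convex set $S$ by the identity $\Sfunc(D-F)=\Sfunc(D-D_0)+\Sfunc(D_0-F)$ holding for all $D\in S$. Set $D_0:=T'_E$, the $E$-reduced divisor of $T'$. Since $T'_{T_E}$ is by definition (and by the uniqueness in Theorem~\ref{thm:main}) the unique $T_E$-reduced divisor of $T'$, it suffices to verify that this same $D_0$ also satisfies the criterion with respect to $T_E$; that is, to show $\Sfunc(D-T_E)=\Sfunc(D-D_0)+\Sfunc(D_0-T_E)$ for every $D\in T'$.

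The key point is that $T_E$ is $E$-reduced not merely in $T'$ but in the larger set $T\supseteq T'$. First I would apply Corollary~\ref{cor:reduced}~(6) to $T_E$ as the $E$-reduced divisor of $T$. Because every $D\in T'$ and the divisor $D_0=T'_E$ all lie in $T$, this yields the two identities
$$\Sfunc(D-E)=\Sfunc(D-T_E)+\Sfunc(T_E-E)\quad\text{for all }D\in T',$$
$$\Sfunc(D_0-E)=\Sfunc(D_0-T_E)+\Sfunc(T_E-E).$$
Next I would invoke Corollary~\ref{cor:reduced}~(6) for $D_0=T'_E$ inside $T'$, which gives $\Sfunc(D-E)=\Sfunc(D-D_0)+\Sfunc(D_0-E)$ for all $D\in T'$. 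Substituting the first displayed identity into the left-hand side and the second into the right-hand side, and then cancelling the common finite term $\Sfunc(T_E-E)$, produces exactly the desired identity $\Sfunc(D-T_E)=\Sfunc(D-D_0)+\Sfunc(D_0-T_E)$ for all $D\in T'$. By Corollary~\ref{cor:reduced}~(6) once more, this means $D_0$ is the $T_E$-reduced divisor of $T'$, i.e. $T'_{T_E}=D_0=T'_E$.

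The main obstacle is choosing the right criterion among the seven equivalent ones in Corollary~\ref{cor:reduced}. The set-theoretic criteria (4) and (5), which involve intersections of $\Gmin$ sets, do not combine cleanly: feeding in the reducedness of $T_E$ in $T$ only yields equalities of the relevant $\Gmin$ sets after intersecting both sides with $\Gmin(f_{T_E-E})$, and one cannot simply cancel this common factor from an intersection. By contrast, the additive criterion (6) converts the problem into a single linear cancellation, which is legitimate precisely because $\Sfunc$ is finite-valued. I expect the only thing requiring care is to confirm that all three divisors involved, $D$, $D_0$, and $T_E$, genuinely lie in $T$, so that the criterion for $T_E$ being $E$-reduced in $T$ may be applied to each of them; this is immediate from $T'\subseteq T$ together with $D_0=T'_E\in T'$.
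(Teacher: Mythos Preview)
Your proof is correct and uses essentially the same approach as the paper: both arguments rely on the $\Sfunc$-additivity criterion of Corollary~\ref{cor:reduced}~(6) and the same telescoping cancellation of $\Sfunc(T_E-E)$. The only cosmetic difference is the direction of the argument: the paper fixes $T'_{T_E}$ and shows it is $E$-reduced in $T'$, whereas you fix $T'_E$ and show it is $T_E$-reduced in $T'$; by uniqueness (Theorem~\ref{thm:main}) these are equivalent.
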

\begin{proof}
To prove $T'_E=T'_{T_E}$, it suffices to show that $$\Sfunc(D'-E)=\Sfunc(D'-T'_{T_E})+\Sfunc(T'_{T_E}-E)$$ for every $D'\in T'$ by Corollary~\ref{cor:reduced}.

Actually, applying Corollary~\ref{cor:reduced} to $T$ with respect to $E$, we get
$$\Sfunc(D-E)=\Sfunc(D-T_E)+\Sfunc(T_E-E)$$ for every $D\in T$, and in particular
$$\Sfunc(T'_{T_E}-E)=\Sfunc(T'_{T_E}-T_E)+\Sfunc(D_1-E).$$

Applying Corollary~\ref{cor:reduced} to $T'$ with respect to $T_E$, we get $$\Sfunc(D'-T_E)=\Sfunc(D'-T'_{T_E})+\Sfunc(T'_{T_E}-T_E)$$ for every $D'\in T'$.

Therefore,
\begin{align*}
& \Sfunc(D'-E)=\Sfunc(D'-T_E)+\Sfunc(T_E-E) \\
&= \Sfunc(D'-T'_{T_E})+\Sfunc(T'_{T_E}-T_E)+\Sfunc(T_E-E) \\
&= \Sfunc(D'-T'_{T_E})+\Sfunc(T'_{T_E}-E)
\end{align*}
for every $D'\in T'$, and $T'_{T_E}$ is exactly the $E$-reduced divisor in $T'$ as claimed.

%
\end{proof}

Let $E\in\RDivPlusD(\Gamma)$ and $r_{\min}=\inf_{D\in T}\rho(E,D)$ (knowing $T$ is compact, actually we have $r_{\min}=\min_{D\in T}\rho(E,D)$). The following proposition shows that sublevel sets of the distance function $\rho_E:=\rho(E,\star)$ and the $\Sfunc$-function $\Sfunc_{\star-E}$ on $T$ are all tropically convex.
For $r,s\in\mR_+$, we let $L_{\leqslant r}^{T}(\rho_E)=\{D\in T| \rho(E,D)\leqslant r\}$, $L_{=r}^{T}(\rho_E)=\{D\in T| \rho(E,D)=r\}$, $L_{\leqslant s}^{T}(\Sfunc_{\star-E})=\{D\in T| \Sfunc_{\star-E}(D)\leqslant s\}$, and $L_{=s}^{T}(\Sfunc_{\star-E})=\{D\in T| \Sfunc_{\star-E}(D)=s\}$. In particular, we also denote the the level set $L_{=r_{\min}}^{T}(\rho_E)$ of $\rho_E$ at the minimum distance by $L_{\min}^{T}(\rho_E)$.

\begin{prop} \label{prop:levelset_pt}
Under the above hypotheses and notations, we have
\begin{enumerate}
\item The $E$-reduced divisor $T_E$ lies in $L_{\min}^{T}(\rho_E)$.
\item $L_{\min}^{T}(\rho_E)$, $L_{\leqslant r}^{T}(\rho_E)$, $L_{=r}^{T}(\rho_E)$, $L_{\leqslant s}^{T}(\Sfunc_{\star-E})$ and $L_{=s}^{T}(\Sfunc_{\star-E})$ are all compact subsets of $T$.
\item $L_{\min}^{T}(\rho_E)$, $L_{\leqslant r}^{T}(\rho_E)$ and $L_{\leqslant s}^{T}(\Sfunc_{\star-E})$ are tropically convex with the compactness assumption of $T$ removed.
\end{enumerate}
\end{prop}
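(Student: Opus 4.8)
The plan is to derive all three statements from the criterions for general reduced divisors (Corollary~\ref{cor:reduced}) together with the monotonicity behavior of $g_\rho$ and $g_{\Sfunc}$ along t-paths established in Proposition~\ref{prop:working}. The guiding observation is that both $\rho_E$ and $\Sfunc_{\star-E}$, when restricted to any t-path $P_{D_2-D_1}$ emanating from a divisor, behave in a controlled (first decreasing, then increasing) manner governed by whether $\Gmin(f_{D_1-E})\cap\Gmin(f_{D_2-D_1})$ is empty or not. Tropical convexity of a sublevel set amounts to verifying that along every t-segment joining two points of the set, the relevant function never exceeds the level.

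First I would prove statement (1). By Proposition~\ref{prop:working}, for any $D\in T$ the function $g_\rho(t)=\rho(E,P_{D-T_E}(t))$ is increasing at $t=0$ exactly when $\Gmin(f_{T_E-E})\cap\Gmin(f_{D-T_E})\neq\emptyset$; but this intersection condition is precisely criterion (4) of Corollary~\ref{cor:reduced}, which holds for the $E$-reduced divisor $T_E$. Hence $\rho(E,D)=g_\rho(1)\geqslant g_\rho(0)=\rho(E,T_E)$ for every $D\in T$, so $T_E$ attains the minimum distance and lies in $L_{\min}^{T}(\rho_E)$.

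Next I would handle statement (3), the tropical convexity (with compactness of $T$ dropped, so $T$ is merely tropically convex). Take $D_1,D_2$ in a given sublevel set and let $D(t)=P_{D_2-D_1}(t)$; since $T$ is tropically convex, $D(t)\in T$, so I only need the function bound. For $L_{\leqslant s}^{T}(\Sfunc_{\star-E})$, I apply Proposition~\ref{prop:working} to the t-path from $D_1$ to $D_2$: in Case (1) $g_{\Sfunc}$ is increasing so $g_{\Sfunc}(t)\leqslant g_{\Sfunc}(1)=\Sfunc(D_2-E)\leqslant s$; in Case (2) $g_{\Sfunc}$ decreases initially, and by running the same argument from the $D_2$ endpoint (t-path from $D_2$ to $D_1$) one sees the graph of $g_{\Sfunc}$ is ``valley-shaped'', bounded above by $\max(g_{\Sfunc}(0),g_{\Sfunc}(1))\leqslant s$. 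The same valley-shape argument, using $g_\rho$ in place of $g_{\Sfunc}$, gives convexity of $L_{\leqslant r}^{T}(\rho_E)$; and $L_{\min}^{T}(\rho_E)=L_{\leqslant r_{\min}}^{T}(\rho_E)$ is the special case $r=r_{\min}$. The key technical point here is to rule out that $g_\rho$ or $g_{\Sfunc}$ has an interior local maximum exceeding both endpoint values; this is exactly what Remark~\ref{rmk:working}~(2)--(3) guarantee, since once such a function is strictly increasing at an interior point it stays increasing, forbidding any interior peak.

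Finally, statement (2), compactness, follows by topological generalities once continuity is in hand. The functions $\rho_E$ and $\Sfunc_{\star-E}$ are continuous on $T$ (by the metric axioms and Lemma~\ref{lem:Sfunc}~(3) respectively), so the closed sublevel sets $L_{\leqslant r}^{T}(\rho_E)$ and $L_{\leqslant s}^{T}(\Sfunc_{\star-E})$ and the level sets $L_{=r}^{T}(\rho_E)$, $L_{=s}^{T}(\Sfunc_{\star-E})$, $L_{\min}^{T}(\rho_E)$ are closed subsets of the compact space $T$, hence compact. I expect the main obstacle to be the careful justification of the valley-shape monotonicity in statement (3): Proposition~\ref{prop:working} is stated as a dichotomy at a single endpoint, so I must combine the information obtained from both endpoints $D_1$ and $D_2$ (invoking Remark~\ref{rmk:working}~(5) to see the two cases are mutually exclusive across the segment) and use the ``once increasing, stays increasing'' propagation to conclude that the supremum over the interior is attained at an endpoint.
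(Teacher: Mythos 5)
Your proof is correct and follows essentially the same route as the paper: part (1) via the reduced-divisor criterion of Corollary~\ref{cor:reduced} (the paper writes this as a one-line computation with $\normalize(f_{D-T_E})+\normalize(f_{T_E-E})$, you phrase it as monotonicity of $g_\rho$ from Proposition~\ref{prop:working}, which is the same fact), part (2) via continuity of $\rho_E$ and $\Sfunc_{\star-E}$ plus closedness inside the compact set $T$, and part (3) via the maximum principle along t-segments that the paper also extracts from Proposition~\ref{prop:working} and Remark~\ref{rmk:working}. One small note: the mutual exclusivity you need at an interior point $D(t)$ of the segment --- that the ``initially decreasing'' case cannot hold in both directions simultaneously --- comes most directly from Lemma~\ref{lem:clutching} (since $\Gmin(f_{D_1-D(t)})\bigcup\Gmin(f_{D_2-D(t)})=\Gamma$, the nonempty set $\Gmin(f_{D(t)-E})$ must meet one of them) rather than from Remark~\ref{rmk:working}~(5), which concerns the two endpoints; this does not affect the validity of your argument.
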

\begin{proof}
Let $D$ be any divisor in $T$. By Corollary~\ref{cor:reduced}, we have
$$\Gmin(f_{D-E})=\Gmin(f_{D-T_E})\bigcap\Gmin(f_{T_E-E})\neq\emptyset.$$ Therefore,
\begin{align*}
\rho(E,D)&=\max(\normalize(f_{D-E}))=\max(\normalize(f_{D-T_E})+\normalize(f_{T_E-E}))\\
&\geqslant\max(\normalize(f_{T_E-E}))=\rho(E,T_E),
\end{align*}
which implies $\rho(E,T_E)=r_{\min}$ and thus $T_E\in L_{\min}^{T}(\rho_E)$.

For (2), the compactness of $L_{\min}^{T}(\rho_E)$, $L_{\leqslant r}^{T}(\rho_E)$, $L_{=r}^{T}(\rho_E)$, $L_{\leqslant s}^{T}(\Sfunc_{\star-E})$ and $L_{=s}^{T}(\Sfunc_{\star-E})$ follows from the compactness of $T$ and the continuity of the distance function and the $\Sfunc$-function.

Now let us show $L_{\leqslant r}^{T}(\rho_E)$ and $L_{\leqslant s}^{T}(\Sfunc_{\star-E})$ are tropically convex. In the following arguments, we do not require $T$ to be compact. The tropical convexity of $L_{\min}^{T}(\rho_E)$ will follow from the tropical convexity of $L_{\leqslant r}^{T}(\rho_E)$ by setting $r=r_{\min}$. By Proposition~\ref{prop:working} and Remark~\ref{rmk:working}, if $D_1,D_2\in L_{\leqslant r}^{T}(\rho_E)$, then $$\rho(E,D)\leqslant\max\{\rho(E,D_1),\rho(E,D_2)\}\leqslant r$$ for all $D$ in $\tconv(D_1,D_2)$ and thus $\tconv(D_1,D_2)\subseteq L_{\leqslant r}^{T}(\rho_E)$. Respectively, if $D_1,D_2\in L_{\leqslant s}^{T}(\Sfunc_{\star-E})$, then $$\Sfunc_{\star-E}(D)<\max\{\Sfunc_{\star-E}(D_1),\Sfunc_{\star-E}(D_2)\}\leqslant s$$ for all $D$ in the interior of $\tconv(D_1,D_2)$ and thus $\tconv(D_1,D_2)\subseteq L_{\leqslant s}^{T}(\Sfunc_{\star-E})$. Therefore, both $L_{\leqslant r}^{T}(\rho_E)$ and $L_{\leqslant s}^{T}(\Sfunc_{\star-E})$ are tropically convex.
\end{proof}

\section{Reduced divisors in tropical segments} \label{S:RD_tseg}
As t-segments are tropically convex and compact (Lemma~\ref{lem:tpath}), the reduced divisors are well-defined for t-segments. In this section, we study the properties of reduced divisors in t-segments, and the results will be employed intensively in the next section where we give proofs to some prestated theorems.

\subsection{Basic properties}
\begin{lem} \label{lem:RD_on_tsegment}
For $E,D_1,D_2\in\RDivPlusD(\Gamma)$, let $D_0$ be the $E$-reduced divisor in $\tconv(D_1,D_2)$. Then we have
$$\Gmin(f_{D_0-E})=\Gmin(f_{D_1-E})\bigcup\Gmin(f_{D_2-E}),$$ and for all $D\in\tconv(D_1,D_2)$, $$\Gmin(f_{D-E})\subseteq\Gmin(f_{D_1-E})\bigcup\Gmin(f_{D_2-E}).$$
\end{lem}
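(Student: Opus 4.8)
The plan is to obtain both assertions as a direct consequence of criterion (5) of Corollary~\ref{cor:reduced} combined with the clutching criterion of Lemma~\ref{lem:clutching}, without any case analysis on the position of $D_0$ along the segment. The key observation is that criterion (5) expresses each endpoint's minimizer set as an intersection with the common set $\Gmin(f_{D_0-E})$, while the clutching lemma says the remaining factors cover $\Gamma$; distributing intersection over union then collapses everything.

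Concretely, I would first record the two inputs. Since $\tconv(D_1,D_2)=\imag(P_{D_2-D_1})$ is compact and tropically convex (Lemma~\ref{lem:tpath}), Theorem~\ref{thm:main} makes $D_0$ well-defined, and Corollary~\ref{cor:reduced}~(5) applies: for every $D\in\tconv(D_1,D_2)$,
$$\Gmin(f_{D-E})=\Gmin(f_{D-D_0})\bigcap\Gmin(f_{D_0-E}).$$
Second, because $D_0\in\imag(P_{D_2-D_1})$, Lemma~\ref{lem:clutching}~(3) yields the covering
$$\Gmin(f_{D_1-D_0})\bigcup\Gmin(f_{D_2-D_0})=\Gamma.$$

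For the claimed equality I would apply criterion (5) to the two endpoints $D_1,D_2\in\tconv(D_1,D_2)$, take the union, and distribute:
\begin{align*}
\Gmin(f_{D_1-E})\bigcup\Gmin(f_{D_2-E})
&=\bigl(\Gmin(f_{D_1-D_0})\cap\Gmin(f_{D_0-E})\bigr)\cup\bigl(\Gmin(f_{D_2-D_0})\cap\Gmin(f_{D_0-E})\bigr)\\
&=\bigl(\Gmin(f_{D_1-D_0})\cup\Gmin(f_{D_2-D_0})\bigr)\cap\Gmin(f_{D_0-E})\\
&=\Gamma\cap\Gmin(f_{D_0-E})=\Gmin(f_{D_0-E}),
\end{align*}
the last line using the covering above. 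For the containment, for any $D\in\tconv(D_1,D_2)$ criterion (5) immediately gives $\Gmin(f_{D-E})\subseteq\Gmin(f_{D_0-E})$, and substituting the equality just proved yields $\Gmin(f_{D-E})\subseteq\Gmin(f_{D_1-E})\cup\Gmin(f_{D_2-E})$, as required.

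I do not anticipate a serious obstacle: once the two lemmas are in hand the argument is a one-step set identity. The point to verify carefully is that Lemma~\ref{lem:clutching} is invoked in the correct direction, namely that it is the membership $D_0\in\imag(P_{D_2-D_1})$ (which holds since $D_0$ is chosen inside the t-segment) that produces the covering, and that after unwinding criterion (5) it is $\Gmin(f_{D_i-D_0})$, rather than $\Gmin(f_{D_0-D_i})$, that appears as the complementary factor. As a sanity check, the degenerate cases $D_0=D_1$ or $D_0=D_2$ — where one of the two min-sets is all of $\Gamma$ — are covered verbatim by the same display, so no separate treatment of the endpoint situation is needed.
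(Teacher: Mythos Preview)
Your proposal is correct and follows essentially the same approach as the paper: apply criterion~(5) of Corollary~\ref{cor:reduced} to each $D\in\tconv(D_1,D_2)$ (in particular to $D_1$ and $D_2$), invoke Lemma~\ref{lem:clutching} for the covering $\Gmin(f_{D_1-D_0})\cup\Gmin(f_{D_2-D_0})=\Gamma$, and distribute. Your write-up is in fact slightly more explicit than the paper's in displaying the distributive step.
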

\begin{proof}
Applying Corollary~\ref{cor:reduced} to $\tconv(D_1,D_2)$ with respect to $E$ and knowing that $D_0$ is the corresponding reduced divisor, we have
$$\Gmin(f_{D_1-E})=\Gmin(f_{D_1-D_0})\bigcap\Gmin(f_{D_0-E}),$$
$$\Gmin(f_{D_2-E})=\Gmin(f_{D_2-D_0})\bigcap\Gmin(f_{D_0-E}),$$
and $$\Gmin(f_{D-E})=\Gmin(f_{D-D_0})\bigcap\Gmin(f_{D_0-E}).$$
Moreover, we have $$\Gmin(f_{D_1-D_0})\bigcap\Gmin(f_{D_2-D_0})=\Gamma$$ by Lemma~\ref{lem:clutching}. Therefore,
$$\Gmin(f_{D-E})\subseteq\Gmin(f_{D_0-E})=\Gmin(f_{D_1-E})\bigcup\Gmin(f_{D_2-E}).$$
\end{proof}

\begin{lem} \label{lem:RD_sub_tsegment}
 Let $E,D_1,D_2\in\RDivPlusD(\Gamma)$ and $D'_1,D'_2\in\tconv(D_1,D_2)$. Suppose $D'_1\in\tconv(D_1,D'_2)$. Let $D_0$ be the $E$-reduced divisor in $\tconv(D_1,D_2)$ and $D'_0$ be the $E$-reduced divisor in $\tconv(D'_1,D'_2)$. Then
\begin{enumerate}
\item $D'_0=D_0$ if and only if $D_0\in\tconv(D'_1,D'_2)$;
\item $D'_0=D'_1$ if and only if $D_0\in\tconv(D_1,D'_1)$;
\item $D'_0=D'_2$ if and only if $D_0\in\tconv(D'_2,D_2)$.
\end{enumerate}
\end{lem}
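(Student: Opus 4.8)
The plan is to reduce all three equivalences to the behavior of the single function $\Sfunc_{\star-E}$ along the ambient t-segment $\tconv(D_1,D_2)$, parametrized isometrically. By Lemma~\ref{lem:tpath}~(2) the map $\hat P_{D_2-D_1}$ identifies $[0,L]$ (with $L=\rho(D_1,D_2)$) isometrically with $\imag(P_{D_2-D_1})=\tconv(D_1,D_2)$; write $D(s)=\hat P_{D_2-D_1}(s)$, so $D(0)=D_1$, $D(L)=D_2$, and set $h(s)=\Sfunc_{\star-E}(D(s))=\Sfunc(D(s)-E)$. Writing $D'_1=D(a)$ and $D'_2=D(b)$, the fact that $D'_1,D'_2\in\tconv(D_1,D_2)$ gives $a,b\in[0,L]$, and the hypothesis $D'_1\in\tconv(D_1,D'_2)=D([0,b])$ forces $a\le b$, so the parameter order is $0\le a\le b\le L$. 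By Lemma~\ref{lem:tpath}~(1) the relevant sub-segments are then $\tconv(D'_1,D'_2)=D([a,b])$, $\tconv(D_1,D'_1)=D([0,a])$, and $\tconv(D'_2,D_2)=D([b,L])$. Since reduced divisors minimize $\Sfunc_{\star-E}$, the point $D_0$ is the minimizer of $h$ on $[0,L]$ and $D'_0$ is the minimizer of $h$ on $[a,b]$.

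Next I would show that $h$ is \emph{unimodal}: writing $D_0=D(s_0)$, I claim $h$ is strictly decreasing on $[0,s_0]$ and strictly increasing on $[s_0,L]$. This is where the reduced-divisor machinery enters. Applying Corollary~\ref{cor:reduced}~(2) to $\tconv(D_1,D_2)$, whose $E$-reduced divisor is $D_0$, with the choice $D=D_2$, the function $\Sfunc(P_{D_2-D_0}(t)-E)$ is strictly increasing; since $\tconv(D_0,D_2)=D([s_0,L])$ by Lemma~\ref{lem:tpath}~(1), this says precisely that $h$ is strictly increasing on $[s_0,L]$. Taking instead $D=D_1$ shows $\Sfunc(P_{D_1-D_0}(t)-E)$ is strictly increasing, i.e.\ $h$ increases as $s$ moves from $s_0$ down toward $0$, so $h$ is strictly decreasing on $[0,s_0]$.

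Finally I would read off the minimizer of $h|_{[a,b]}$ from unimodality and match it to the three claimed equivalences. If $s_0\in[a,b]$ the minimizer is $s_0$, giving $D'_0=D_0$; if $s_0\le a$ then $h$ is strictly increasing on $[a,b]$, so the minimizer is $a$, giving $D'_0=D'_1$; and if $s_0\ge b$ then $h$ is strictly decreasing on $[a,b]$, so the minimizer is $b$, giving $D'_0=D'_2$. Translating the position of $s_0$ through the sub-segment identifications of the first paragraph yields $D'_0=D_0\iff s_0\in[a,b]\iff D_0\in\tconv(D'_1,D'_2)$ (part (1), which also follows directly from Lemma~\ref{lem:red_on_subconv}), $D'_0=D'_1\iff s_0\le a\iff D_0\in\tconv(D_1,D'_1)$ (part (2)), and $D'_0=D'_2\iff s_0\ge b\iff D_0\in\tconv(D'_2,D_2)$ (part (3)). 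The hard part will be the unimodality step: I must argue carefully that the t-paths from $D_0$ to $D_1$ and to $D_2$ are exactly the two sub-segments $D([0,s_0])$ and $D([s_0,L])$, so that Corollary~\ref{cor:reduced}~(2) translates into genuine monotonicity of $h$ in the parameter $s$, and I must treat the boundary overlaps $s_0=a$ and $s_0=b$ (and the degenerate case $D'_1=D'_2$) consistently so that each of the three equivalences holds exactly as stated.
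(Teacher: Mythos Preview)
Your approach is correct and essentially the same as the paper's: the paper's one-line proof simply invokes Corollary~\ref{cor:reduced} to assert that $\Sfunc(P_{D_1-D_0}(t)-E)$ and $\Sfunc(P_{D_2-D_0}(t)-E)$ are both strictly increasing, which is exactly your unimodality step. Your write-up just spells out the parametrization and case analysis that the paper leaves implicit.
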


\begin{proof}
 This is an immediate consequence of the fact that the functions $\Sfunc(P_{D_1-D_0}(t)-E)$ and $\Sfunc(P_{D_2-D_0}(t)-E)$ are both strictly increasing (Corollary~\ref{cor:reduced}).
\end{proof}

\begin{lem} \label{lem:RD_2tsegs}
For $D_1,D_2,D_3\in\RDivPlusD(\Gamma)$, we have
\begin{enumerate}
\item $D_2$ is the $D_1$-reduced divisor in $\tconv(D_2,D_3)$ if and only if $$\Sfunc(D_3-D_1)=\Sfunc(D_3-D_2)+\Sfunc(D_2-D_1).$$
\item $D_2$ is simultaneously the $D_1$-reduced divisor in $\tconv(D_2,D_3)$ and the $D_3$-reduced divisor in $\tconv(D_1,D_2)$ if and only if
    $$\rho(D_1,D_3)=\rho(D_1,D_2)+\rho(D_2,D_3).$$
\end{enumerate}
\end{lem}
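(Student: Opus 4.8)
The plan is to derive both parts directly from the reduced-divisor criterions in Corollary~\ref{cor:reduced} together with the additivity characterizations in Lemma~\ref{lem:Sfunc}, so that essentially no new computation is needed beyond translating between the $\Sfunc$- and $\Gmin$-formulations.

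For part (1), I would apply Corollary~\ref{cor:reduced} with $T=\tconv(D_2,D_3)$, $E=D_1$, and candidate reduced divisor $D_0=D_2$. Criterion (6) states that $D_2$ is the $D_1$-reduced divisor in $\tconv(D_2,D_3)$ precisely when $\Sfunc(D-D_1)=\Sfunc(D-D_2)+\Sfunc(D_2-D_1)$ holds for every $D\in\tconv(D_2,D_3)$. The forward implication is then immediate: specializing this identity to $D=D_3$ yields the desired equation $\Sfunc(D_3-D_1)=\Sfunc(D_3-D_2)+\Sfunc(D_2-D_1)$. For the converse I would instead verify criterion (4), namely $\Gmin(f_{D-D_2})\cap\Gmin(f_{D_2-D_1})\neq\emptyset$ for all $D\in\tconv(D_2,D_3)$. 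By Lemma~\ref{lem:Sfunc}~(1) the hypothesized $\Sfunc$-identity is equivalent to $\Gmin(f_{D_3-D_2})\cap\Gmin(f_{D_2-D_1})\neq\emptyset$. The key point is that this single intersection condition already forces criterion (4) along the entire segment: writing $D=P_{D_3-D_2}(t)$, the case $t=0$ gives $D=D_2$, where $\Gmin(f_{D_2-D_2})=\Gamma$ makes the condition trivial, while for $t\in(0,1]$ Remark~\ref{rmk:Gminmax}~(1) gives $\Gmin(f_{D-D_2})=\Gmin(f_{D_3-D_2})$, so criterion (4) reduces exactly to the hypothesis.

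For part (2) I would apply part (1) twice. The first requirement, that $D_2$ be the $D_1$-reduced divisor in $\tconv(D_2,D_3)$, is equivalent to $\Sfunc(D_3-D_1)=\Sfunc(D_3-D_2)+\Sfunc(D_2-D_1)$. Relabeling the triple so that $D_3$ plays the role of the reference divisor and $D_1$ the far endpoint, part (1) shows that the second requirement, that $D_2$ be the $D_3$-reduced divisor in $\tconv(D_1,D_2)$, is equivalent to $\Sfunc(D_1-D_3)=\Sfunc(D_1-D_2)+\Sfunc(D_2-D_3)$. These two $\Sfunc$-identities are precisely the two equations appearing in Lemma~\ref{lem:Sfunc}~(2), whose conjunction is equivalent to the distance additivity $\rho(D_1,D_3)=\rho(D_1,D_2)+\rho(D_2,D_3)$, and the conclusion follows.

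The main obstacle is the "if" direction of part (1): one must see that the hypothesis, which a priori constrains only the single endpoint $D_3$, in fact controls every divisor on the t-segment. This is exactly where Remark~\ref{rmk:Gminmax}~(1) does the work, since it shows that $\Gmin(f_{D-D_2})$ is constant (equal to $\Gmin(f_{D_3-D_2})$) as $D$ ranges over the open segment, collapsing the whole family of conditions in criterion (4) to the single hypothesized intersection. Everything else is a direct translation between the $\Sfunc$- and $\Gmin$-formulations already packaged in Corollary~\ref{cor:reduced} and Lemma~\ref{lem:Sfunc}.
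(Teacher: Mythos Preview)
Your proposal is correct and follows essentially the same route as the paper's proof. The paper's argument for part (1) is the terse ``follows easily from Lemma~\ref{lem:Sfunc}~(1), Proposition~\ref{prop:working} and Corollary~\ref{cor:reduced}'', and your write-up simply unpacks this: where the paper cites Proposition~\ref{prop:working} for the key step that the $\Gmin$-intersection condition at the endpoint $D_3$ propagates along the whole segment, you invoke the underlying fact directly via Remark~\ref{rmk:Gminmax}~(1) (which is exactly what the proof of Proposition~\ref{prop:working} uses). Part (2) is handled identically in both, by applying part (1) twice and then Lemma~\ref{lem:Sfunc}~(2).
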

\begin{proof}
(1) follows easily from Lemma~\ref{lem:Sfunc}~(1), Proposition~\ref{prop:working} and the criterions for reduced divisors (Corollary~\ref{cor:reduced}).

Recall that by Lemma~\ref{lem:Sfunc}~(2), we have
$\rho(D_1,D_3)=\rho(D_1,D_2)+\rho(D_2,D_3)$ if and only if \\
      $$\Sfunc(D_3-D_1)=\Sfunc(D_3-D_2)+\Sfunc(D_2-D_1)$$ and
      $$\Sfunc(D_1-D_3)=\Sfunc(D_1-D_2)+\Sfunc(D_2-D_3).$$
      Then (2) follows from (1).
\end{proof}

\begin{rmk}
By Lemma~\ref{lem:RD_2tsegs}, the for the sufficient and necessary conditions for equality in Lemma~\ref{lem:dist_decrease} can be equivalently stated as $E_2$ is the $E_1$-reduced divisor in $\tconv(E_2,T_{E_2})$ and $E_1$ is the $E_2$-reduced divisor in $\tconv(E_1,T_{E_1})$.
\end{rmk}

\subsection{Tropical triangles}
Roughly, we may call the tropical convex hull generated by three divisors in $\RDivPlusD(\Gamma)$ a tropical triangle. We will show that tropical triangles are made of tropical segments.
\begin{prop} \label{prop:TExtension}
Let $D_0,D_1,D_2\in\RDivPlusD(\Gamma)$ (see Figure~\ref{Fig:TExtension}), $D_3\in\tconv(D_0,D_1)$ and $D_4\in\tconv(D_0,D_2)$. Then we have we have the following properties.
\begin{enumerate}
\item For every $D_5\in\tconv(D_3,D_4)$, there exists $D'_5\in\tconv(D_1,D_2)$ such that $D_5\in\tconv(D_0,D'_5)$. In particular, we can let $D'_5$ be the $D_5$-reduced divisor in $\tconv(D_1,D_2)$.
\item Conversely, for every $D'_5\in\tconv(D_1,D_2)$, there exists $D_5\in\tconv(D_3,D_4)$ such that $D_5\in\tconv(D_0,D'_5)$. (In other words, $\tconv(D_3,D_4)\bigcap\tconv(D_0,D'_5)\neq\emptyset$.) More precisely, assuming $D'_3$ is the $D_3$-reduced divisor in $\tconv(D_1,D_2)$ and $D'_4$ is the $D_4$-reduced divisor in $\tconv(D_1,D_2)$, we have
    \begin{itemize}
        \item if $D'_5\in\tconv(D'_3,D'_4)$, then $D_5$ can be chosen such that $D'_5$ be the $D_5$-reduced divisor in $\tconv(D_1,D_2)$;
        \item if $D'_5\in\tconv(D_1,D'_3)$, then $D_5$ can be chosen to be $D_3$; and
        \item if $D'_5\in\tconv(D_2,D'_4)$, then $D_5$ can be chosen to be $D_4$.
    \end{itemize}
\end{enumerate}
\end{prop}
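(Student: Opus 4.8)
The plan is to reduce everything to the one-dimensional analysis of reduced divisors on a t-segment that has already been set up in Proposition~\ref{prop:working} and Corollary~\ref{cor:reduced}, together with the ``fiber'' structure of reduced divisors given by Lemma~\ref{lem:red_fiber} and the sub-segment comparison in Lemma~\ref{lem:RD_sub_tsegment}. The geometric picture is that $\tconv(D_1,D_2)$ is a ``screen'' onto which we project rays emanating from $D_0$: for a divisor $D_5$ lying in the triangle, the claim ``$D_5\in\tconv(D_0,D_5')$ for some $D_5'\in\tconv(D_1,D_2)$'' says precisely that the ray from $D_0$ through $D_5$ hits the opposite side, and the natural candidate for $D_5'$ is the $D_5$-reduced divisor of $\tconv(D_1,D_2)$. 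So the whole proof is organized around showing that the reduced-divisor map $D\mapsto (\tconv(D_1,D_2))_D$ realizes this projection.

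For part (1), I would take $D_5\in\tconv(D_3,D_4)$ and set $D_5'$ to be the $D_5$-reduced divisor in $T:=\tconv(D_1,D_2)$. The goal is to show $D_5\in\tconv(D_0,D_5')$, which by Lemma~\ref{lem:clutching}~(3) is equivalent to the covering condition $\Gmin(f_{D_0-D_5})\cup\Gmin(f_{D_5'-D_5})=\Gamma$. The key input is the reduced-divisor criterion Corollary~\ref{cor:reduced}~(4)--(5) applied to $T$ with base point $E=D_5$, which gives $\Gmin(f_{D_i-D_5})\cap\Gmin(f_{D_5'-D_5})\neq\emptyset$ and more usefully the identity form. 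To get the $D_0$ direction into the picture I would use that $D_3\in\tconv(D_0,D_1)$ and $D_4\in\tconv(D_0,D_2)$, hence $D_5\in\tconv(D_3,D_4)$ sits ``between'' $D_0$ and the side $\tconv(D_1,D_2)$; concretely I expect to combine the covering relations coming from $D_3\in\tconv(D_0,D_1)$ and $D_4\in\tconv(D_0,D_2)$ (each an application of Lemma~\ref{lem:clutching}) with the reducedness of $D_5'$ to verify that $\Gmin(f_{D_0-D_5})$ and $\Gmin(f_{D_5'-D_5})$ together exhaust $\Gamma$. This is the step I expect to be the main obstacle: assembling the various $\Gmin$-covering identities so that the $D_0$ side and the reduced-projection side interlock correctly, rather than merely intersect.

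For part (2), I would argue in the reverse direction by a case analysis driven by the positions $D_3'$ and $D_4'$, the respective reduced divisors of $D_3$ and $D_4$ in $T$; note $D_3',D_4'\in T=\tconv(D_1,D_2)$ partition it into the three sub-segments $\tconv(D_1,D_3')$, $\tconv(D_3',D_4')$, $\tconv(D_2,D_4')$ (up to orientation). Given $D_5'\in T$ I locate it in one of these three pieces. In the two ``boundary'' cases $D_5'\in\tconv(D_1,D_3')$ or $D_5'\in\tconv(D_2,D_4')$, I would show the ray from $D_0$ through $D_3$ (resp.\ $D_4$) already hits the correct part of the side, so that $D_5=D_3$ (resp.\ $D_5=D_4$) works; here Lemma~\ref{lem:RD_sub_tsegment} identifies exactly when the reduced divisor of a sub-segment coincides with an endpoint, which is what pins down which boundary case we are in. In the ``interior'' case $D_5'\in\tconv(D_3',D_4')$, I would invoke a continuity/intermediate-value argument: as $D_5$ traverses $\tconv(D_3,D_4)$, its reduced projection onto $T$ moves continuously (reduced divisors depend continuously on the base point, via Lemma~\ref{lem:dist_decrease}) and sweeps from $D_3'$ to $D_4'$, so by connectedness it attains the prescribed $D_5'$; for that particular $D_5$, part (1) guarantees $D_5\in\tconv(D_0,D_5')$ with $D_5'$ its reduced projection, closing the case. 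The delicate point in part (2) is the interior case, where I must ensure the sweep of reduced projections is genuinely continuous and surjective onto $\tconv(D_3',D_4')$; I would justify surjectivity from connectedness of $\tconv(D_3,D_4)$ together with the endpoint behavior established in the two boundary cases.
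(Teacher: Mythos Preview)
Your plan is correct and matches the paper's proof. The one place you are uncertain---the ``interlocking'' in Part~(1)---is resolved by Lemma~\ref{lem:RD_on_tsegment}, which is the precise tool you are missing: applied to $\tconv(D_1,D_2)$ with $E=D_5$ it gives $\Gmin(f_{D_5'-D_5})=\Gmin(f_{D_1-D_5})\cup\Gmin(f_{D_2-D_5})$, and applied to $\tconv(D_0,D_1)$ and $\tconv(D_0,D_2)$ (still with $E=D_5$) it gives the containments $\Gmin(f_{D_3-D_5})\subseteq\Gmin(f_{D_0-D_5})\cup\Gmin(f_{D_1-D_5})$ and $\Gmin(f_{D_4-D_5})\subseteq\Gmin(f_{D_0-D_5})\cup\Gmin(f_{D_2-D_5})$. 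Together with $\Gmin(f_{D_3-D_5})\cup\Gmin(f_{D_4-D_5})=\Gamma$ from Lemma~\ref{lem:clutching}, everything lives at the common base point $D_5$ and the covering $\Gmin(f_{D_0-D_5})\cup\Gmin(f_{D_5'-D_5})=\Gamma$ falls out. Your suggestion to invoke Lemma~\ref{lem:clutching} directly for $D_3\in\tconv(D_0,D_1)$ would instead produce a covering relation at base point $D_3$, which does not combine with the others; Lemma~\ref{lem:RD_on_tsegment} is exactly what transports such relations to an arbitrary external reference point. Part~(2) is handled in the paper just as you describe: continuity of the reduced-divisor map (packaged as Lemma~\ref{lem:RDM_continuous}) for the interior case, and for the boundary case $D_5'\in\tconv(D_1,D_3')$ a direct $\Gmin$ computation from Proposition~\ref{prop:working} and Corollary~\ref{cor:reduced} showing $\Gmin(f_{D_1-D_3})\subseteq\Gmin(f_{D_5'-D_3})$, rather than going through Lemma~\ref{lem:RD_sub_tsegment}.
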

\begin{figure}[tbp]
\centering
\includegraphics[width=.4\textwidth]{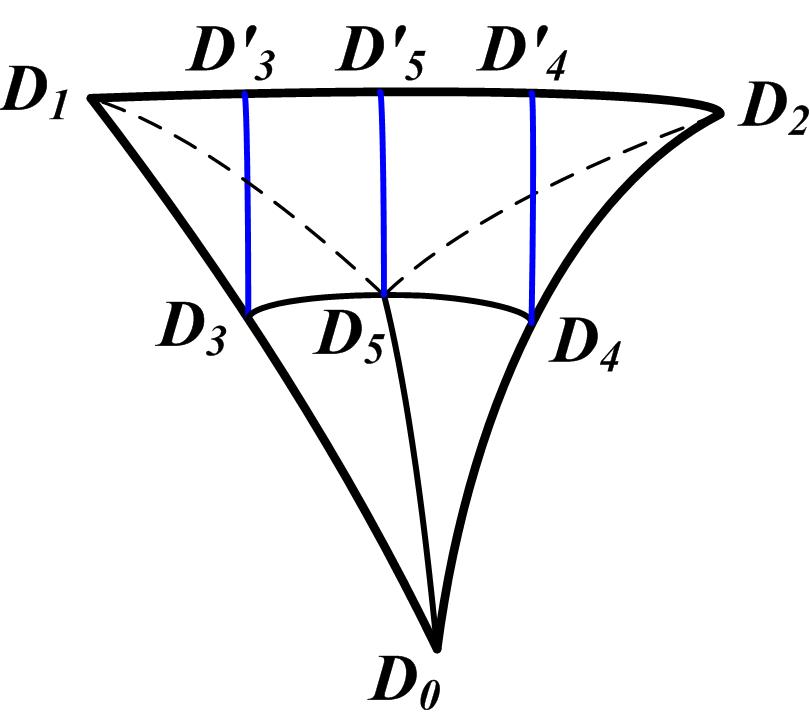}
\caption{}\label{Fig:TExtension}
\end{figure}
\begin{proof}
For (1), we suppose $D'_5$ is the $D_5$-reduced divisor in $\tconv(D_1,D_2)$, and claim that $D_5\in\tconv(D_0,D'_5)$. By Lemma~\ref{lem:RD_on_tsegment}, we have $$\Gmin(f_{D'_5-D_5})=\Gmin(f_{D_1-D_5})\bigcup\Gmin(f_{D_2-D_5}).$$

Applying Lemma~\ref{lem:RD_on_tsegment} again, we have
$$\Gmin(f_{D_3-D_5})\subseteq\Gmin(f_{D_0-D_5})\bigcup\Gmin(f_{D_1-D_5})$$
and $$\Gmin(f_{D_4-D_5})\subseteq\Gmin(f_{D_0-D_5})\bigcup\Gmin(f_{D_2-D_5}).$$
Note that $\Gmin(f_{D_3-D_5})\bigcup\Gmin(f_{D_4-D_5})=\Gamma$ by Lemma~\ref{lem:clutching}. Therefore,
\begin{align*}
&\Gmin(f_{D_0-D_5})\bigcup\Gmin(f_{D'_5-D_5}) \\
&=\Gmin(f_{D_0-D_5})\bigcup(\Gmin(f_{D_1-D_5})\bigcup\Gmin(f_{D_2-D_5}))\\
&=(\Gmin(f_{D_0-D_5})\bigcup\Gmin(f_{D_1-D_5}))\bigcup(\Gmin(f_{D_0-D_5})\bigcup\Gmin(f_{D_2-D_5}))\\
&\supseteq\Gmin(f_{D_3-D_5})\bigcup\Gmin(f_{D_4-D_5})=\Gamma,
\end{align*}
which means $D_5\in\tconv(D_0,D'_5)$ by Lemma~\ref{lem:clutching}.

For (2), we need to use a fact in Section~\ref{S:Canproj} that reduced-divisor maps (Definition~\ref{def:RDM}) are continuous (Lemma~\ref{lem:RDM_continuous}). Then it follows that if $D'_5\in\tconv(D'_3,D'_4)$, then there exists $D_5\in\tconv(D_3,D_4)$ such that $D'_5$ be the $D_5$-reduced divisor in $\tconv(D_1,D_2)$. By (1), this also means that $D_5\in\tconv(D_0,D'_5)$ as expected.

If $D'_5\in\tconv(D_1,D'_3)$, then by Proposition~\ref{prop:working} and Corollary~\ref{cor:reduced},
$$\Gmin(f_{D'_5-D_3})=\Gmin(f_{D'_5-D'_3})\bigcap\Gmin(f_{D'_3-D_3})$$ and
$$\Gmin(f_{D_1-D_3})=\Gmin(f_{D_1-D'_3})\bigcap\Gmin(f_{D'_3-D_3}),$$ which imply $\Gmin(f_{D_1-D_3})\subseteq\Gmin(f_{D'_5-D_3})$. (Actually, if in addition $D'_5\neq D'_3$, then \\ $\Gmin(f_{D_1-D_3})=\Gmin(f_{D'_5-D_3})$.) By Lemma~\ref{lem:clutching}, since $D_3\in\tconv(D_0,D_1)$ which implies
$$\Gmin(f_{D_0-D_3})\bigcup\Gmin(f_{D_1-D_3})=\Gamma,$$ we have
$$\Gmin(f_{D_0-D_3})\bigcup\Gmin(f_{D'_5-D_3})=\Gamma$$ which implies $D_3\in\tconv(D_0,D'_5)$.

If $D'_5\in\tconv(D_2,D'_4)$, a similar argument can show that $D_4\in\tconv(D_0,D'_5)$.
\end{proof}

\begin{rmk}
In our proof of Proposition~\ref{prop:TExtension}~(2), in the case that $D'_5\in\tconv(D'_3,D'_4)$ and $D'_5$ is the $D_5$-reduced divisor in $\tconv(D_1,D_2)$, we do not need an additional assumption that $D'_3,D'_5,D'_4$ lie in $\tconv(D_1,D_2)$ in the same order as $D_3,D_5,D_4$ lie in $\tconv(D_3,D_4)$ as illustrated in Figure~\ref{Fig:TExtension}. But this is actually true, i.e., we must have $D'_3\in\tconv(D_1,D'_4)$ (or equivalently $D'_4\in\tconv(D_2,D'_3)$) and $D'_5\in\tconv(D'_3,D'_4)$. Here is why. First we show that $D'_3\in\tconv(D_1,D'_4)$. If $D'_3\notin\tconv(D_1,D'_4)$, then $D_3\neq D_4$. Referring to our proof of Proposition~\ref{prop:TExtension}~(2), we see that $D_3,D_4\in\tconv(D_0,D'_3)$ and $D_3,D_4\in\tconv(D_0,D'_4)$.
Let us draw contradictions from all possible cases. Recall that by Lemma~\ref{lem:red_fiber}, given a compact tropical convex set $T$, a divisor $E$ of the same degree and $T_E$ the corresponding $E$-reduced divisor in $T$, all the divisors on $\tconv(E,T_E)$ share the same reduced divisor in $T$.
\begin{itemize}
\item $D_4\in\tconv(D_3,D'_3)$: It implies $D'_4=D'_3$, a contradiction.
\item $D_3\in\tconv(D_4,D'_4)$: It implies $D'_3=D'_4$, a contradiction.
\item $D_4\in\tconv(D_0,D_3)$: It goes back to the case $D_3\in\tconv(D_4,D'_4)$. (To see this, you may want to use Lemma~\ref{lem:clutching} and refer to our proof of Proposition~\ref{prop:TExtension}~(2).)
\item $D_3\in\tconv(D_0,D_4)$: It goes back to the case $D_4\in\tconv(D_3,D'_3)$.
\end{itemize}
Thus we get $D'_3\in\tconv(D_1,D'_4)$ as claimed. Now suppose there exists $D_5\in\tconv(D_3,D_4)$ such that $D'_5\notin\tconv(D'_3,D'_4)$. Actually we may suppose $D'_5\in\tconv(D_1,D'_3)\setminus\{D'_3\}$ and $D'_3\in\tconv(D'_4,D'_5)$. Then by the continuity of reduced-divisor maps, there must exist $D_6\in\tconv(D_4,D_5)$ such that $D'_3$ is also the $D_6$-reduced divisor in $\tconv(D_1,D_2)$. Then following from Proposition~\ref{prop:TExtension}~(1), both $D_3$ and $D_6$ lie in $\tconv(D_0,D'_3)$. Since $D_5\in\tconv(D_3,D_6)$, we get $D'_5=D'_3$ no matter $D_6\in\tconv(D_3,D'_3)$ or $D_3\in\tconv(D_6,D'_3)$ by Lemma~\ref{lem:red_fiber}, which is a contradiction.
\end{rmk}

\begin{rmk}
There are several aspects of Proposition~\ref{prop:TExtension}. First, as in (1), if we choose arbitrarily a divisor (e.g. $D_3$) in $\tconv(D_0,D_1)$, a divisor (e.g. $D_4$) in $\tconv(D_0,D_2)$, and then arbitrarily a divisor (e.g. $D_5$) in $\tconv(D_3,D_4)$, we may add a t-segment $\tconv(D_5,D'_5)$ with $D'_5\in\tconv(D_1,D_2)$ to the t-segment $\tconv(D_0,D_5)$ while the result of such an extension is exactly $\tconv(D_0,D'_5)$. With one step further, we can derive Corollary~\ref{cor:tri_tconvex}, which is a special case of Theorem~\ref{thm:construction}. Second, the $D_5$-reduced divisor in $\tconv(D_1,D_2)$ (as we've done throughout the proof) is a desired choice for $D'_5$. On the other hand, in some cases, we can choose $D'_5$ which is not necessarily $D_5$-reduced. Third, as in (2), it says that $\tconv(D_3,D_4)$ and $\tconv(D_0,D'_5)$ must intersect. But the intersection might not be just a single point.
Example~\ref{ex:TExtension} gives a concrete demonstration of these phenomena.
\end{rmk}

\begin{cor} \label{cor:tri_tconvex}
For $D_0,D_1,D_2\in\RDivPlusD(\Gamma)$, choose arbitrarily $D'_1$ in $\tconv(D_0,D_1)$ and $D'_2$ in $\tconv(D_0,D_2)$. Then we have
$$\tconv(D'_1,D'_2)\subseteq\bigcup_{D\in \tconv(D_1,D_2)}\tconv(D_0,D)$$ and
$$\tconv(D_0,D_1,D_2)=\bigcup_{D\in \tconv(D_1,D_2)}\tconv(D_0,D).$$
\end{cor}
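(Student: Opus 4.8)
The plan is to derive both assertions from Proposition~\ref{prop:TExtension}(1) alone; part (2) of that proposition (which relies on continuity of reduced-divisor maps from a later section) is not needed here. For the inclusion statement I would simply specialize Proposition~\ref{prop:TExtension} by taking $D_3=D'_1$ and $D_4=D'_2$. Since by hypothesis $D'_1\in\tconv(D_0,D_1)$ and $D'_2\in\tconv(D_0,D_2)$, the hypotheses of Proposition~\ref{prop:TExtension} are satisfied, and part (1) asserts that for every $D_5\in\tconv(D'_1,D'_2)$ there exists $D'_5\in\tconv(D_1,D_2)$ with $D_5\in\tconv(D_0,D'_5)$. This is precisely the claim $\tconv(D'_1,D'_2)\subseteq\bigcup_{D\in\tconv(D_1,D_2)}\tconv(D_0,D)$.

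For the equality, write $U:=\bigcup_{D\in\tconv(D_1,D_2)}\tconv(D_0,D)$. The inclusion $U\subseteq\tconv(D_0,D_1,D_2)$ is immediate from tropical convexity: because $D_1,D_2\in\tconv(D_0,D_1,D_2)$ we get $\tconv(D_1,D_2)\subseteq\tconv(D_0,D_1,D_2)$, and then for each $D\in\tconv(D_1,D_2)$ the endpoints $D_0$ and $D$ both lie in $\tconv(D_0,D_1,D_2)$, so $\tconv(D_0,D)\subseteq\tconv(D_0,D_1,D_2)$ as well.

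For the reverse inclusion I would show that $U$ is itself a tropically convex set containing the generators $\{D_0,D_1,D_2\}$; minimality of the t-convex hull then forces $\tconv(D_0,D_1,D_2)\subseteq U$. Membership of the generators is clear: taking $D=D_1$ and the endpoint $D_0$ gives $D_0,D_1\in\tconv(D_0,D_1)\subseteq U$, and $D=D_2$ gives $D_2\in U$. The crux is the tropical convexity of $U$. Given $A,B\in U$, pick $D_a,D_b\in\tconv(D_1,D_2)$ with $A\in\tconv(D_0,D_a)$ and $B\in\tconv(D_0,D_b)$. By Lemma~\ref{lem:tpath}(1) the sub-t-segment satisfies $\tconv(D_a,D_b)\subseteq\tconv(D_1,D_2)$, so I may apply the already-established inclusion statement to the triangle $D_0,D_a,D_b$, with $A,B$ in the roles of $D'_1,D'_2$. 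This yields $\tconv(A,B)\subseteq\bigcup_{D\in\tconv(D_a,D_b)}\tconv(D_0,D)\subseteq U$, which proves $U$ is tropically convex and completes the argument.

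The only delicate point—and the main obstacle—is the tropical convexity of $U$, and the plan above reduces it cleanly to the first half of the corollary (hence to Proposition~\ref{prop:TExtension}(1)) together with the nesting of t-segments in Lemma~\ref{lem:tpath}(1). I would also keep careful track of the convention, noted after Lemma~\ref{lem:tpath}, that every two-point hull $\tconv(D_i,D_j)$ is to be read as the t-segment $\imag(P_{D_j-D_i})$, so that all the set operations above take place among genuine t-segments.
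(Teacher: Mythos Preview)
Your proposal is correct and follows essentially the same approach as the paper: derive the first inclusion from Proposition~\ref{prop:TExtension}(1), then use that inclusion to verify that $U=\bigcup_{D\in\tconv(D_1,D_2)}\tconv(D_0,D)$ is tropically convex and contains the three generators, whence the equality follows by minimality of the hull. The paper's proof is terser---it simply asserts that $U$ is ``tropically convex by definition'' after the first inclusion---whereas you make this step explicit by passing to the sub-triangle $D_0,D_a,D_b$ via Lemma~\ref{lem:tpath}(1) and reapplying the first part; this is precisely the detail the paper's phrase elides.
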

\begin{proof}
By Proposition~\ref{prop:TExtension}, we see immediately
$$\tconv(D'_1,D'_2)\subseteq\bigcup_{D\in \tconv(D_1,D_2)}\tconv(D_0,D).$$
Then $\bigcup_{D\in \tconv(D_1,D_2)}\tconv(D_0,D)$ is tropically convex by definition, and must be the minimal to contain $D_0$, $D_1$ and $D_2$.
Thus $$\tconv(D_0,D_1,D_2)=\bigcup_{D\in \tconv(D_1,D_2)}\tconv(D_0,D).$$
\end{proof}

%

\subsection{Useful length inequalities}
\begin{prop} \label{prop:dist_ineq}
For $D^0_1,D^0_2,D_1,D_2\in\RDivPlusD(\Gamma)$, let $E_1\in\tconv(D^0_1,D_1)$ and $E_2\in\tconv(D^0_2,D_2)$. Let $D'_1$ be the $E_2$-reduced divisor in $\tconv(D_0,D_1)$ and $D'_2$ the $E_1$-reduced divisor in $\tconv(D_0,D_2)$. If $D'_1\in\tconv(D^0_1,E_1)$ and $D'_2\in\tconv(D^0_2,E_2)$, then $\rho(E_1,E_2)\leqslant\rho(D''_1,D''_2)$ for all $D''_1\in\tconv(E_1,D_1)$ and $D''_2\in\tconv(E_2,D_2)$.
\end{prop}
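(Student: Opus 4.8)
The statement asserts a distance inequality $\rho(E_1,E_2)\leqslant\rho(D''_1,D''_2)$ for all $D''_1\in\tconv(E_1,D_1)$ and $D''_2\in\tconv(E_2,D_2)$, under the hypotheses that $D'_1$ (the $E_2$-reduced divisor in $\tconv(D_0,D_1)$) lies in $\tconv(D^0_1,E_1)$ and symmetrically $D'_2\in\tconv(D^0_2,E_2)$. The natural engine here is Lemma~\ref{lem:dist_decrease}, which says that reduced-divisor maps are distance-nonincreasing, combined with the characterization of equality via the $\Sfunc$-additivity identities. My plan is to interpret the hypotheses as saying that $E_1$ is itself a reduced divisor of some segment with respect to $E_2$ (and vice versa), so that moving from $E_1,E_2$ outward toward $D''_1,D''_2$ can only increase distance.

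**Key steps.**
First I would examine what $D'_1\in\tconv(D^0_1,E_1)$ buys us. Since $D'_1$ is the $E_2$-reduced divisor in $\tconv(D_0,D_1)$ and it lies in the subsegment $\tconv(D^0_1,E_1)\subseteq\tconv(D_0,D_1)$ with $E_1\in\tconv(D^0_1,D_1)$, Lemma~\ref{lem:red_on_subconv} implies that $D'_1$ is also the $E_2$-reduced divisor in $\tconv(D^0_1,E_1)$. More usefully, because $E_1$ lies on the far side of $D'_1$ from $D^0_1$ along the segment, I would invoke Lemma~\ref{lem:RD_sub_tsegment}(3) to conclude that the $E_2$-reduced divisor in $\tconv(E_1,D_1)$ is exactly $E_1$. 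By Lemma~\ref{lem:RD_2tsegs}(1), this is equivalent to the $\Sfunc$-additivity identity
\begin{equation*}
\Sfunc(D''_1-E_2)=\Sfunc(D''_1-E_1)+\Sfunc(E_1-E_2)
\end{equation*}
for every $D''_1\in\tconv(E_1,D_1)$. Symmetrically, the hypothesis $D'_2\in\tconv(D^0_2,E_2)$ yields that $E_2$ is the $E_1$-reduced divisor in $\tconv(E_2,D_2)$, giving $\Sfunc(D''_2-E_1)=\Sfunc(D''_2-E_2)+\Sfunc(E_2-E_1)$ for every $D''_2\in\tconv(E_2,D_2)$.

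**Assembling the inequality.**
With these two identities in hand, I would fix arbitrary $D''_1\in\tconv(E_1,D_1)$ and $D''_2\in\tconv(E_2,D_2)$ and mimic the computation in the proof of Lemma~\ref{lem:dist_decrease}, but run in reverse. The idea is that $E_1,E_2$ play the roles of reduced divisors of $D''_1,D''_2$ respectively (with respect to the opposite base divisors $E_2,E_1$), so the chain of $\Sfunc$-triangle inequalities that there produced $\rho(T_{E_1},T_{E_2})\leqslant\rho(E_1,E_2)$ now produces $\rho(E_1,E_2)\leqslant\rho(D''_1,D''_2)$. Concretely, using $\Sfunc(D''_2-D''_1)+\Sfunc(D''_1-D''_2)=\rho(D''_1,D''_2)l_{\mathrm{tot}}$ together with the two additivity identities and the general $\Sfunc$-triangle inequality (Lemma~\ref{lem:Sfunc}(1)) applied to $\Sfunc(E_1-E_2)\leqslant\Sfunc(E_1-D''_2)+\Sfunc(D''_2-E_2)$ and its mirror, I expect the cross terms to telescope and leave $\rho(E_1,E_2)l_{\mathrm{tot}}\leqslant\rho(D''_1,D''_2)l_{\mathrm{tot}}$.

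**Main obstacle.**
The delicate step is the second one: correctly translating the segment-membership hypotheses into the statement that $E_1$ is the $E_2$-reduced divisor in $\tconv(E_1,D_1)$. This requires being careful about the direction of the segments—that $E_1$ sits between $D'_1$ and $D_1$, so that $E_1$ is the endpoint nearest $D'_1$ of the subsegment $\tconv(E_1,D_1)$, which is exactly the configuration Lemma~\ref{lem:RD_sub_tsegment}(3) handles. I would verify this ordering explicitly from $E_1\in\tconv(D^0_1,D_1)$ and $D'_1\in\tconv(D^0_1,E_1)$ before applying the lemma. Once the reduced-divisor reinterpretation is secured, the final algebraic assembly is routine bookkeeping with the $\Sfunc$-identities, parallel to the displayed computation already carried out in Lemma~\ref{lem:dist_decrease}.
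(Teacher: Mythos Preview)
Your proposal is correct and follows essentially the same approach as the paper: both derive the two $\Sfunc$-additivity identities $\Sfunc(D''_1-E_2)=\Sfunc(D''_1-E_1)+\Sfunc(E_1-E_2)$ and $\Sfunc(D''_2-E_1)=\Sfunc(D''_2-E_2)+\Sfunc(E_2-E_1)$, then feed them into the identity $\rho\cdot l_{\mathrm{tot}}=\Sfunc(\cdot)+\Sfunc(\cdot)$ together with the $\Sfunc$-triangle inequality to finish. The only minor difference is how the identities are obtained: the paper keeps $D'_1$ as the intermediary and shows it is $E_2$-reduced in $\tconv(D^0_1,D''_1)$ (via Lemma~\ref{lem:RD_sub_tsegment}), then chains Corollary~\ref{cor:reduced}(6) through $D'_1$ and $E_1$; you instead observe directly (via the same lemma, part (2) or (3) depending on orientation) that $E_1$ is the $E_2$-reduced divisor in $\tconv(E_1,D_1)$ and read off the identity from Corollary~\ref{cor:reduced}(6) in one step, which is slightly cleaner.
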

\begin{figure}[tbp]
\centering
\includegraphics[width=.4\textwidth]{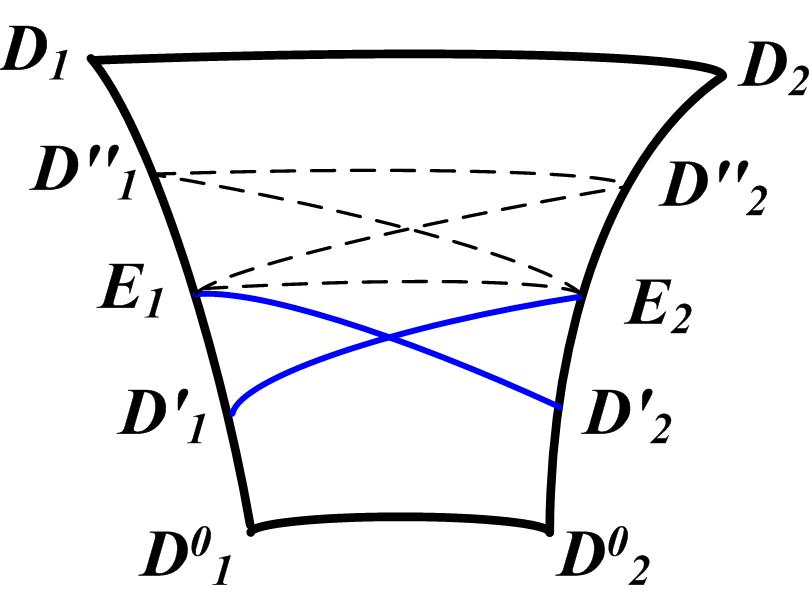}
\caption{}\label{Fig:dist_ineq}
\end{figure}
\begin{proof}
Let $l_\mathrm{tot}$ be the total length of $\Gamma$.
Under the assumptions and applying Lemma~\ref{lem:RD_sub_tsegment}, $D'_1$ must also be the $E_2$-reduced divisor in both $\tconv(D^0_1,E_1)$ and $\tconv(D^0_1,D''_1)$,  and $D'_2$ must also $E_1$-reduced divisor in both $\tconv(D^0_2,E_2)$ $\tconv(D^0_2,D''_2)$. Therefore, applying Corollary~\ref{cor:reduced}, we get the following equalities.
\begin{align*}
&\Sfunc(D''_1-E_2) \\
&=\Sfunc(D'_1-E_2)+\Sfunc(D''_1-D'_1) \\
&=\Sfunc(D'_1-E_2)+\Sfunc(D''_1-E_1)+\Sfunc(E_1-D'_1) \\
&=\Sfunc(D''_1-E_1)+\Sfunc(E_1-E_2),
\end{align*}
and analogously $$\Sfunc(D''_2-E_1) =\Sfunc(D''_2-E_2)+\Sfunc(E_2-E_1).$$
Therefore,
\begin{align*}
\rho(E_1,E_2)l_\mathrm{tot}&=\Sfunc(E_1-E_2)+\Sfunc(E_2-E_1) \\
&=(\Sfunc(D''_1-E_2)-\Sfunc(D''_1-E_1))+(\Sfunc(D''_2-E_1)-\Sfunc(D''_2-E_2)) \\
&=(\Sfunc(D''_1-E_2)-\Sfunc(D''_2-E_2))+(\Sfunc(D''_2-E_1)-\Sfunc(D''_1-E_1)) \\
&\leqslant \Sfunc(D''_1-D''_2)+\Sfunc(D''_2-D''_1) = \rho(D''_1,D''_2)l_\mathrm{tot}.
\end{align*}
The last inequality follows from the triangle inequality for $\Sfunc$-functions(Lemma~\ref{lem:Sfunc}).
\end{proof}

The following corollaries of Proposition~\ref{prop:dist_ineq} are two special cases convenient for applications.
\begin{cor} \label{cor:red_dist_ineq}
Let $D^0_1,D^0_2,D_1,D_2,E_1,E_2$ be under the same hypotheses as in Proposition~\ref{prop:dist_ineq}. If $E_1$ is the $E_2$-reduced divisor in $\tconv(D^0_1,D_1)$, then $$\rho(E_1,E_2)\leqslant\max(\rho(D_1,D_2),\rho(D^0_1,D^0_2)).$$ In particular, if in addition $D^0_1=D^0_2$, then $\rho(E_1,E_2)\leqslant\rho(D''_1,D''_2)$ for all $D''_1\in\tconv(E_1,D_1)$ and $D''_2\in\tconv(E_2,D_2)$.
\end{cor}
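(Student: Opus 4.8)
The plan is to exploit the fact that the added hypothesis pins down one of the two reduced divisors appearing in Proposition~\ref{prop:dist_ineq}. By definition $D'_1$ is the $E_2$-reduced divisor in $\tconv(D^0_1,D_1)$, and the extra assumption says $E_1$ is exactly this reduced divisor; by the uniqueness in Theorem~\ref{thm:main} I immediately get $D'_1=E_1$. Consequently the first hypothesis $D'_1\in\tconv(D^0_1,E_1)$ of Proposition~\ref{prop:dist_ineq} holds automatically, since $E_1$ is an endpoint of that segment. The only remaining freedom is then the position of $D'_2$ inside $\tconv(D^0_2,D_2)$, and I would organize the proof of the maximum bound around which half of this segment contains $D'_2$.

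For the bound $\rho(E_1,E_2)\leqslant\max(\rho(D_1,D_2),\rho(D^0_1,D^0_2))$ I would split into two cases. If $D'_2\in\tconv(D^0_2,E_2)$, then both hypotheses of Proposition~\ref{prop:dist_ineq} are met, and evaluating its conclusion at $D''_1=D_1$, $D''_2=D_2$ gives $\rho(E_1,E_2)\leqslant\rho(D_1,D_2)$. If instead $D'_2\in\tconv(E_2,D_2)$, I would apply Proposition~\ref{prop:dist_ineq} to the \emph{reversed} configuration obtained by interchanging $D^0_i$ with $D_i$ for each $i$. The two segments are unchanged as sets, so the reduced divisors are unchanged as well: the $E_2$-reduced divisor of the first segment is still $D'_1=E_1$, and the $E_1$-reduced divisor of the second is still $D'_2$. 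The reversed hypotheses now read $E_1\in\tconv(D_1,E_1)$ (again an endpoint) and $D'_2\in\tconv(D_2,E_2)$ (the current case), so the reversed conclusion, evaluated at the endpoints $D^0_1,D^0_2$, yields $\rho(E_1,E_2)\leqslant\rho(D^0_1,D^0_2)$. Since $D'_2$ necessarily lies in one of the two halves, combining the cases produces the claimed maximum.

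For the \emph{in particular} clause, suppose $D^0_1=D^0_2=:D^0$ and fix $D''_1\in\tconv(E_1,D_1)$ and $D''_2\in\tconv(E_2,D_2)$. The idea is to bootstrap the maximum bound already proved, applied to the shrunken configuration in which $D_1,D_2$ are replaced by $D''_1,D''_2$. The hypotheses transfer: from $D''_i\in\tconv(E_i,D_i)$ together with $E_i\in\tconv(D^0,D_i)$ one reads off $E_i\in\tconv(D^0,D''_i)$; and since $E_1$ is the $E_2$-reduced divisor in $\tconv(D^0,D_1)$ and lies in the subsegment $\tconv(D^0,D''_1)$, Lemma~\ref{lem:red_on_subconv} guarantees that $E_1$ is still the $E_2$-reduced divisor of this subsegment. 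The maximum bound then gives $\rho(E_1,E_2)\leqslant\max(\rho(D''_1,D''_2),\rho(D^0,D^0))=\rho(D''_1,D''_2)$, which is precisely the pointwise inequality asserted.

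I expect the main difficulty to be setting up the orientation-reversal in the second case correctly: one must be careful that reversing both segments leaves the reduced divisors $D'_1,D'_2$ genuinely unchanged—they depend only on the underlying compact t-convex sets, not on any chosen orientation—and that the identity $D'_1=E_1$ survives as the endpoint condition, for it is this reversal that replaces $\rho(D_1,D_2)$ by $\rho(D^0_1,D^0_2)$ and thereby accounts for the second term in the maximum. The final bootstrap is then essentially routine, resting only on Lemma~\ref{lem:red_on_subconv} to preserve the reducedness hypothesis under passage to a subsegment.
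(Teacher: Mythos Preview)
Your argument for the maximum bound is exactly the paper's: identify $D'_1=E_1$, then split on which half of $\tconv(D^0_2,D_2)$ contains $D'_2$, applying Proposition~\ref{prop:dist_ineq} either directly or with the roles of $D^0_i$ and $D_i$ reversed.

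For the ``in particular'' clause your route is genuinely different. The paper does not bootstrap from the maximum bound; instead it shows directly that the second hypothesis of Proposition~\ref{prop:dist_ineq} holds in full when $D^0_1=D^0_2=D_0$. Concretely, applying Lemma~\ref{lem:dist_decrease} twice (once on each segment, comparing with the $D_0$-reduced divisor $D_0$) gives $\rho(D_0,D'_2)\leqslant\rho(D_0,E_1)\leqslant\rho(D_0,E_2)$, forcing $D'_2\in\tconv(D_0,E_2)$; Proposition~\ref{prop:dist_ineq} then yields the inequality for \emph{all} $D''_1,D''_2$ in one stroke. Your approach---shrink to $\tconv(D^0,D''_i)$, invoke Lemma~\ref{lem:red_on_subconv} to preserve reducedness of $E_1$, and then read off $\rho(E_1,E_2)\leqslant\max(\rho(D''_1,D''_2),0)$ from the first part---is also correct and has the virtue of being self-contained, needing only Lemma~\ref{lem:red_on_subconv} rather than the distance-contraction Lemma~\ref{lem:dist_decrease}. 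The paper's approach, on the other hand, establishes the stronger fact that the full hypothesis of Proposition~\ref{prop:dist_ineq} is satisfied (not just the endpoint instances), which can be useful if one wants to cite that proposition again later.
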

\begin{proof}
Let $D'_1$ be the $E_2$-reduced divisor in $\tconv(D^0_1,D_1)$ and $D'_2$ the $E_1$-reduced divisor in $\tconv(D^0_2,D_2)$. Then $D'_1$ is exactly $E_1$ which means $D'_1\in\tconv(D_0,E_1)$ automatically. Thus by Proposition~\ref{prop:dist_ineq}, if $D'_2\in\tconv(D^0_2,E_2)$, then $\rho(E_1,E_2)\leqslant\rho(D_1,D_2)$, and if
$D'_2\in\tconv(D_2,E_2)$, then $\rho(E_1,E_2)\leqslant\rho(D^0_1,D^0_2)$. In both cases, $\rho(E_1,E_2)\leqslant\max(\rho(D_1,D_2),\rho(D^0_1,D^0_2))$.

Recall that by Lemma\ref{lem:dist_decrease}, the distance between reduced divisors is at most the distance between the original divisors. Thus if in addition $D^0_1=D^0_2=D_0$, then $\rho(D_0,D'_2)\leqslant\rho(D_0,E_1)\leqslant\rho(D_0,E_2)$, which implies $D'_2\in\tconv(D_0,E_2)$. It follows from Proposition~\ref{prop:dist_ineq} that $\rho(E_1,E_2)\leqslant\rho(D''_1,D''_2)$.
\end{proof}

\begin{cor} \label{cor:ball_dist_ineq}
Let $D^0_1,D^0_2,D_1,D_2,E_1,E_2$ be under the same hypotheses as in Proposition~\ref{prop:dist_ineq} and suppose $D^0_1=D^0_2=D_0$. If $\rho(D_0,E_1)=\rho(D_0,E_2)$, then $\rho(E_1,E_2)\leqslant\rho(D''_1,D''_2)$ for all $D''_1\in\tconv(E_1,D_1)$ and $D''_2\in\tconv(E_2,D_2)$.
\end{cor}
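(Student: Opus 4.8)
The plan is to deduce the statement directly from Proposition~\ref{prop:dist_ineq} by checking that its two standing hypotheses, namely $D'_1\in\tconv(D_0,E_1)$ and $D'_2\in\tconv(D_0,E_2)$, are automatically satisfied once we impose $D^0_1=D^0_2=D_0$ together with $\rho(D_0,E_1)=\rho(D_0,E_2)$. Here, as in Proposition~\ref{prop:dist_ineq}, $D'_1$ denotes the $E_2$-reduced divisor in $\tconv(D_0,D_1)$ and $D'_2$ the $E_1$-reduced divisor in $\tconv(D_0,D_2)$. Thus the entire content of the corollary reduces to the verification of these two containments.

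To obtain the first containment, I would apply Lemma~\ref{lem:dist_decrease} to the compact tropical convex set $T=\tconv(D_0,D_1)$, using the two reference divisors $D_0$ and $E_2$. Since $D_0\in T$, Lemma~\ref{lem:identity} gives $T_{D_0}=D_0$, while by definition $T_{E_2}=D'_1$. Hence Lemma~\ref{lem:dist_decrease} yields $\rho(D_0,D'_1)=\rho(T_{D_0},T_{E_2})\leqslant\rho(D_0,E_2)$. By the symmetric argument on $T'=\tconv(D_0,D_2)$ with reference divisors $D_0$ and $E_1$, I get $\rho(D_0,D'_2)\leqslant\rho(D_0,E_1)$.

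Next I would feed in the equal-radius hypothesis $\rho(D_0,E_1)=\rho(D_0,E_2)$, which converts the two bounds into $\rho(D_0,D'_1)\leqslant\rho(D_0,E_1)$ and $\rho(D_0,D'_2)\leqslant\rho(D_0,E_2)$. The final step is to upgrade these distance comparisons to the desired segment containments. Since $D'_1$ and $E_1$ both lie on the t-segment $\tconv(D_0,D_1)$, and since by Lemma~\ref{lem:tpath}~(2) this segment is parametrized isometrically (hence monotonically in distance) starting from $D_0$, the inequality $\rho(D_0,D'_1)\leqslant\rho(D_0,E_1)$ forces $D'_1\in\tconv(D_0,E_1)$; likewise $D'_2\in\tconv(D_0,E_2)$. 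With both hypotheses of Proposition~\ref{prop:dist_ineq} in hand, the conclusion $\rho(E_1,E_2)\leqslant\rho(D''_1,D''_2)$ for all $D''_1\in\tconv(E_1,D_1)$ and $D''_2\in\tconv(E_2,D_2)$ follows immediately.

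The argument is short, and the only point requiring care is the last one: translating a comparison of distances-to-$D_0$ into an actual containment of divisors in a t-segment. This is legitimate precisely because a t-segment is an isometric image of an interval (Lemma~\ref{lem:tpath}~(2)), so the distance from the fixed endpoint $D_0$ is a coordinate along the segment and hence order-determining; I would make sure to invoke this explicitly rather than treat it as self-evident. Everything else is a direct application of the monotonicity of the reduced-divisor map (Lemma~\ref{lem:dist_decrease}) and of Proposition~\ref{prop:dist_ineq} itself.
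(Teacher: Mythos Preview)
Your proposal is correct and follows essentially the same route as the paper's proof: apply Lemma~\ref{lem:dist_decrease} to obtain $\rho(D_0,D'_1)\leqslant\rho(D_0,E_2)$ and $\rho(D_0,D'_2)\leqslant\rho(D_0,E_1)$, use the equal-radius hypothesis to swap the right-hand sides, deduce the segment containments, and then invoke Proposition~\ref{prop:dist_ineq}. Your write-up is in fact more careful than the paper's in justifying why $\rho(D_0,D'_1)\leqslant\rho(D_0,E_1)$ forces $D'_1\in\tconv(D_0,E_1)$ via the isometric parametrization of Lemma~\ref{lem:tpath}(2), and in making explicit the use of Lemma~\ref{lem:identity} to identify $T_{D_0}=D_0$.
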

\begin{proof}
By Lemma\ref{lem:dist_decrease} and get $\rho(D_0,D'_1)\leqslant\rho(D_0,E_2)$ and $\rho(D_0,D'_2)\leqslant\rho(D_0,E_1)$. Since $\rho(D_0,E_1)=\rho(D_0,E_2)$, we get $\rho(D_0,D'_1)\leqslant\rho(D_0,E_1)$ and $\rho(D_0,D'_2)\leqslant\rho(D_0,E_2)$. Thus we have $D'_1\in\tconv(D_0,E_1)$ and $D'_2\in\tconv(D_0,E_2)$, and it follows from Proposition~\ref{prop:dist_ineq} that $\rho(E_1,E_2)\leqslant\rho(D''_1,D''_2)$.
\end{proof}

\section{A revisit of the general properties of tropical convex sets}  \label{S:RD_tconv}
\subsection{Proofs of Theorem~\ref{thm:contractible} and Theorem~\ref{thm:construction}}

\begin{proof} [\textbf{Proof of Theorem~\ref{thm:contractible}}]
Let $T\subseteq\RDivPlusD(\Gamma)$ be tropically convex.
To show $T$ is contractible, it suffices to find a continuous function $h:[0,1]\times T \rightarrow T$ such that for some $D_0\in T$ and all $D\in T$, $h(0,D) = D$ and $h(1,D) = D_0$. Indeed, we can define the contraction map $h$ as follows. Choose $D_0$ arbitrarily from $T$ and let $\kappa=\sup_{D'\in T}\rho(D_0,D')$. For any $D\in T$, we let
$h(t,D)=D$ if $t\in[0,1-\frac{\rho(D_0,D)}{\kappa})$, and $h(t,D)=P_{D_0-D}(\frac{\kappa}{\rho(D_0,D)}(t-1)+1)$ if $t\in[1-\frac{\rho(D_0,D)}{\kappa},1]$. More explicitly, the contraction happens in the following way: for any $t\in[0,1]$, if $\rho(D_0,D)<\kappa (1-t)$, then $h(t,D)=D$, and otherwise, $h(t,D)$ lies on the t-segment $\tconv(D_0,D)$ with distance $\kappa (1-t)$ to $D_0$. Then it is clear that $h(0,D) = D$ and $h(1,D) = D_0$. Therefore the only remaining fact to verify is the continuity of $h$. In other words, we need to show that $h(t_n,D_n)\rightarrow h(t,D)$ whenever $t_n\rightarrow t$ and $D_n\rightarrow D$ (we let $n>0$ for $D_n$ to avoid confusion with $D_0$).  Note that $$\rho(h(t_n,D_n),h(t,D))\leqslant\rho(h(t_n,D_n),h(t,D_n))+\rho(h(t,D_n),h(t,D)),$$
and $$\rho(h(t_n,D_n),h(t,D_n))\leqslant\rho(D_0,D_n)|t_n-t|\leqslant\kappa|t_n-t|.$$
Therefore, to show the continuity of $h$, it suffices to show $\rho(h(t,D_n),h(t,D))\leqslant\rho(D_n,D)$.

Case (1): $\rho(D_0,D_n)<\kappa (1-t)$ and $\rho(D_0,D)<\kappa (1-t)$. In this case, $h(t,D_n)=D_n$ and $h(t,D)=D$.

Case (2): $\rho(D_0,D_n)<\kappa (1-t)$ and $\rho(D_0,D)\geqslant\kappa (1-t)$. In this case, $h(t,D_n)=D_n$ and $h(t,D)\in\tconv(D_0,D)$ with distance $\kappa (1-t)$ to $D_0$. Let $D'\in\tconv(D_0,D)$ be the $D_n$-reduced divisor in $\tconv(D_0,D)$. Then by Lemma~\ref{lem:dist_decrease}, $$\rho(D_0,D')\leqslant\rho(D_0,D_n)<\kappa (1-t)=\rho(D_0,h(t,D)).$$ This means $D'\in\tconv(D_0,h(t,D))$, and by Proposition~\ref{prop:dist_ineq}, $$\rho(h(t,D_n),h(t,D))=\rho(D_n,h(t,D))\leqslant\rho(D_n,D).$$

Case (3): $\rho(D_0,D_n)\geqslant\kappa (1-t)$ and $\rho(D_0,D)<\kappa (1-t)$. In this case, $h(t,D)=D$ and $h(t,D_n)\in\tconv(D_0,D_n)$ with distance $\kappa (1-t)$ to $D_0$. Let $D'_n\in\tconv(D_0,D_n)$ be the $D$-reduced divisor in $\tconv(D_0,D_n)$. Using an analogous argument as in case (2), we see that $\rho(h(t,D_n),h(t,D))\leqslant\rho(D_n,D)$.

Case (4): $\rho(D_0,D_n)\geqslant\kappa (1-t)$ and $\rho(D_0,D)\geqslant\kappa (1-t)$. In this case, $h(t,D_n)\in\tconv(D_0,D_n)$ and $h(t,D)\in\tconv(D_0,D)$, both with distance $\kappa (1-t)$ to $D_0$. Therefore, by Corollary~\ref{cor:ball_dist_ineq}, we have $\rho(h(t,D_n),h(t,D))\leqslant\rho(D_n,D)$.
\end{proof}

\begin{rmk}
The contraction map $h$ constructed in the above proof deforms the whole $T$ to a point $D_0\in T$. In particular, one can notice that at each $t\in[0,1]$, the set $h(t,T)$ is actually the sublevel set $L_{\leqslant r}^{T}(\rho_{D_0})$ of the distance function $\rho_{D_0}$ to $D_0$ where $r=\kappa (1-t)$. Therefore, $h(t,T)$ is tropically convex by Proposition~\ref{prop:levelset_pt}~(3).
\end{rmk}

\begin{proof} [\textbf{Proof of Theorem~\ref{thm:construction}}]
Denote $\bigcup_{D\in T,D'\in T'}\tconv(D,D')$ by $\tilde{T}$. Then clearly $\tilde{T}\subseteq\tconv(T,D)$. We claim that $\tilde{T}$ is tropically convex, which will imply $\tilde{T}=\tconv(T,D)$.

Choose arbitrarily $E_1$ and $E_2$ from $\tilde{T}$. Then there exist $D_1,D_2\in T$ and $D'_1,D'_2\in T'$ such that $E_1\in\tconv(D_1,D'_1)$ and $E_2\in\tconv(D_2,D'_2)$. Since $T$ and $T'$ are tropically convex, we have $\tconv(D_1,D_2)\subseteq T$ and $\tconv(D'_1,D'_2)\subseteq T'$.
For every $E\in\tconv(E_1,E_2)$, let $D\in\tconv(D_1,D_2)$ be the $E$-reduced divisor in $\tconv(D_1,D_2)$ and $D'\in\tconv(D'_1,D'_2)$ be the $E$-reduced divisor in $\tconv(D'_1,D'_2)$. To show $\tilde{T}$ is tropically convex, it suffices to show that $E\in\tconv(D,D')$.

By Lemma~\ref{lem:RD_on_tsegment}, we have
$$\Gmin(f_{D-E})=\Gmin(f_{D_1-E})\bigcup\Gmin(f_{D_2-E}),$$
$$\Gmin(f_{D'-E})=\Gmin(f_{D'_1-E})\bigcup\Gmin(f_{D'_2-E}),$$
$$\Gmin(f_{E_1-E})\subseteq\Gmin(f_{D_1-E})\bigcup\Gmin(f_{D'_1-E}),$$
and $$\Gmin(f_{E_2-E})\subseteq\Gmin(f_{D_2-E})\bigcup\Gmin(f_{D'_2-E}).$$
Note that since $E\in\tconv(E_1,E_2)$, we have $\Gmin(f_{E_1-E})\bigcup\Gmin(f_{E_2-E})=\Gamma$ by Lemma~\ref{lem:clutching}. Therefore,
\begin{align*}
&\Gmin(f_{D-E})\bigcup\Gmin(f_{D'-E}) \\
&=(\Gmin(f_{D_1-E})\bigcup\Gmin(f_{D_2-E}))\bigcup(\Gmin(f_{D'_1-E})\bigcup\Gmin(f_{D'_2-E}))\\
&=(\Gmin(f_{D_1-E})\bigcup\Gmin(f_{D'_1-E}))\bigcup(\Gmin(f_{D_2-E})\bigcup\Gmin(f_{D'_2-E}))\\
&\supseteq\Gmin(f_{E_1-E})\bigcup\Gmin(f_{E_2-E})=\Gamma,
\end{align*}
which means $E\in\tconv(D,D')$ by Lemma~\ref{lem:clutching}.

Recall that a metric space is compact if and only if it is complete and totally bounded. Now let us show that if in addition $T$ and $T'$ are complete and totally bounded, then $\tilde{T}$ is also complete and totally bounded.

First, we show that $\tilde{T}$ is complete. Let $E_1,E_2,\ldots$ be a Cauchy sequence in $\tilde{T}$, i.e., $\rho(E_m,E_n)\rightarrow 0$ as $m,n\rightarrow\infty$. We claim that there exists $E_0\in\tilde{T}$ such that $\rho(E_n,E_0)\rightarrow 0$ as $n\rightarrow\infty$, which implies the completeness of $\tilde{T}$. Since $T$ is compact, there exist a unique $E_i$-reduced divisor $D_i$ in $T$ and a unique $E_i$-reduced divisor $D'_i$ in $T'$. Then $D_1,D_2,\ldots$ is a Cauchy sequence in $T$ and $D'_1,D'_2,\ldots$ is a Cauchy sequence in $T'$, since $\rho(D_m,D_n)\leqslant\rho(E_m,E_n)$ and $\rho(D'_m,D'_n)\leqslant\rho(E_m,E_n)$ by Lemma\ref{lem:dist_decrease}. Let $D_0\in T$ be the limit of $D_1,D_2,\ldots$ and $D'_0\in T$ be the limit of $D'_1,D'_2,\ldots$. Consider the t-segments $\tconv(D_0,D'_0)$. Then we get another Cauchy sequence $F_1,F_2,\ldots$ in $\tconv(D_0,D'_0)$, where $F_i$ be the $E_i$-reduced divisor in $\tconv(D_0,D'_0)$. If $E_0\in\tconv(D,D_0)$ is the limit of
$F_1,F_2,\ldots$, then we have $$\rho(E_n,E_0)\leqslant\rho(E_n,F_n)+\rho(F_n,E_0)\leqslant\max(\rho(D_n,D_0),\rho(D'_n,D'_0)+\rho(F_n,E_0),$$
where the second inequality follows from Corollary \ref{cor:red_dist_ineq}. Thus $\rho(E_n,E_0)\rightarrow 0$ as $n\rightarrow\infty$ as claimed.

Second, we show that $\tilde{T}$ is totally bounded, i.e., for every real $\epsilon>0$, there exists a finite cover of $\tilde{T}$ by open balls of radius $\epsilon$. We start with a finite cover of $T$ by open balls $B^T(D_i,\epsilon/2)\subseteq T$ of radius $\epsilon/2$ with centers $D_i\in T$ for $i=1,\ldots,n$, and a finite cover of $T'$ by open balls $B^{T'}(D'_j,\epsilon/2)\subseteq T'$ of radius $\epsilon/2$ with centers $D'_j\in T'$ for $j=1,\ldots,m$.
Then for each $\tconv(D_i,D'_j)$, we have a finite cover by open balls $B^{(i,j)}(D^{(i,j)}_{k^{(i,j)}},\epsilon/2)\subseteq\tconv(D_i,D'_j)$ of radius $\epsilon/2$ with the centers $D^{(i,j)}_{k^{(i,j)}}\in \tconv(D_i,D'_j)$ for $k^{(i,j)}=1,\ldots,m^{(i,j)}$. We claim that there is a finite cover of $\tilde{T}$ by open balls $B^{\tilde{T}}(D^{(i,j)}_{k^{(i,j)}},\epsilon)\subseteq\tilde{T}$ of radius $\epsilon$ with the centers $D^{(i,j)}_{k^{(i,j)}}\in\tilde{T}$ for $i=1,\ldots,n$, $j=1,\ldots,m$ and $k^{(i,j)}=1,\ldots,m^{(i,j)}$.
For any $E\in\tilde{T}$, there exist $D\in T$ and $D'\in T'$ such that $E\in\tconv(D,D')$. Suppose $D\in B^T(D_i,\epsilon/2)$ for some $i$ and $D'\in B^{T'}(D'_j,\epsilon/2)$ for some $j$. Furthermore, let $F$ be the $E$-reduced divisor in $\tconv(D_i,D'_j)$ and suppose $F\in B^{(i,j)}(D^{(i,j)}_{k^{(i,j)}},\epsilon/2)$ for some $D^{(i,j)}_{k^{(i,j)}}$. We have
$$\rho(E,D^{(i,j)}_{k^{(i,j)}})\leqslant\rho(E,F)+\rho(F,D^{(i,j)}_{k^{(i,j)}})
\leqslant\max(\rho(D,D_i),\rho(D',D'_j))+\rho(D'',D^{(i,j)}_{k^{(i,j)}})<\epsilon/2+\epsilon/2=\epsilon,$$
where the second inequality follows from Corollary \ref{cor:red_dist_ineq}. Thus $E$ lies in $B^{\tilde{T}}(D^{(i,j)}_{k^{(i,j)}},\epsilon)$, which means $\tilde{T}$ is covered by this finite collection of open balls as claimed.
\end{proof}

\subsection{Finitely generated tropical convex hulls}

Recall that Lemma~\ref{lem:clutching} provides a criterion for judging whether a divisor $D$ lies in a tropical segment $\tconv(D_1,D_2)$, and Lemma~\ref{lem:RD_on_tsegment} extends the criterion. The following theorem generalizes these results to all finitely generated tropical convex hulls, which are compact according to Corollary~\ref{cor:finite_gen}.

\begin{thm} \label{thm:finite_criterion}
Let $T\subseteq\RDivPlusD$ be a tropical convex hull finitely generated by $D_1,\ldots,D_n$. Then for any $E\in\RDivPlusD$, we have $E\in T$ if and only if $\bigcup_{i=1}^n\Gmin(f_{D_i-E})=\Gamma$. Furthermore, if $D_0$ is the $E$-reduced divisor in $T$ and $D$ is an arbitrary divisor in $T$, then
$$\Gmin(f_{D-E})\subseteq\Gmin(f_{D_0-E})=\bigcup_{i=1}^n\Gmin(f_{D_i-E}).$$
\end{thm}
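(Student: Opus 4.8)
The plan is to prove the stronger ``furthermore'' statement first — namely that $\Gmin(f_{D_0-E})=\bigcup_{i=1}^n\Gmin(f_{D_i-E})$ together with $\Gmin(f_{D-E})\subseteq\Gmin(f_{D_0-E})$ for every $D\in T$ and \emph{every} $E\in\RDivPlusD$ (not merely those $E$ lying in $T$) — and then deduce the membership criterion as a quick corollary. I would argue by induction on the number $n$ of generators. The base case $n=1$ is trivial, since then $T=\{D_1\}$ and $D_0=D_1$; the general inductive step will already recover the $n=2$ case, which is precisely Lemma~\ref{lem:RD_on_tsegment}.

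For the inductive step I set $T'=\tconv(D_1,\ldots,D_{n-1})$, which is finitely generated hence compact by Corollary~\ref{cor:finite_gen}, and I apply Theorem~\ref{thm:construction} to the t-convex sets $T'$ and $\{D_n\}$ to obtain the decomposition $T=\bigcup_{D'\in T'}\tconv(D',D_n)$. Writing $D_0'$ for the $E$-reduced divisor in $T'$, I establish the \emph{upper inclusion} by chasing an arbitrary $D\in T$: it lies on some t-segment $\tconv(D',D_n)$ with $D'\in T'$, so Lemma~\ref{lem:RD_on_tsegment} gives $\Gmin(f_{D-E})\subseteq\Gmin(f_{D'-E})\cup\Gmin(f_{D_n-E})$, while the induction hypothesis applied to $T'$ bounds $\Gmin(f_{D'-E})\subseteq\Gmin(f_{D_0'-E})=\bigcup_{i=1}^{n-1}\Gmin(f_{D_i-E})$. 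Combining these yields $\Gmin(f_{D-E})\subseteq\bigcup_{i=1}^n\Gmin(f_{D_i-E})$, and in particular this holds for $D=D_0$.

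For the \emph{reverse inclusion} at $D_0$ I invoke criterion~(5) of Corollary~\ref{cor:reduced}: since $D_0$ is the $E$-reduced divisor in $T$ and each generator $D_i\in T$, we have $\Gmin(f_{D_i-E})=\Gmin(f_{D_i-D_0})\cap\Gmin(f_{D_0-E})\subseteq\Gmin(f_{D_0-E})$, whence $\bigcup_{i=1}^n\Gmin(f_{D_i-E})\subseteq\Gmin(f_{D_0-E})$. Together with the upper inclusion this proves the claimed equality and the whole ``furthermore'' statement. The membership criterion is then immediate: $\Gmin(f_{D_0-E})=\Gamma$ holds if and only if $f_{D_0-E}$ is constant, i.e.\ $\rho(D_0,E)=0$, i.e.\ $D_0=E$. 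Thus if $\bigcup_{i=1}^n\Gmin(f_{D_i-E})=\Gamma$ then $D_0=E\in T$; conversely, if $E\in T$ then $D_0=E$ by Lemma~\ref{lem:identity}, forcing $\bigcup_{i=1}^n\Gmin(f_{D_i-E})=\Gmin(f_{D_0-E})=\Gamma$.

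The main point is organizational rather than computational: one should prove the $\Gmin$-description of the reduced divisor for all $E$ by induction, after which the equivalence $E\in T\Leftrightarrow\bigcup_{i=1}^n\Gmin(f_{D_i-E})=\Gamma$ falls out through Lemma~\ref{lem:identity}. The only step demanding care is the decomposition, where I must check that Theorem~\ref{thm:construction} applies with the second set taken to be the singleton $\{D_n\}$, so that every divisor of $T$ genuinely sits on a t-segment joining a point of $T'$ to $D_n$; once this is secured, both inclusions are routine consequences of Lemma~\ref{lem:RD_on_tsegment} and Corollary~\ref{cor:reduced}.
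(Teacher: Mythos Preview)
Your proof is correct and follows essentially the same inductive route as the paper: induction on the number of generators, the decomposition of $T$ via Theorem~\ref{thm:construction}, the segment estimate from Lemma~\ref{lem:RD_on_tsegment}, and criterion~(5) of Corollary~\ref{cor:reduced}. The only organizational difference is that the paper applies the induction hypothesis to $T'$ with reference divisor $D_0$ (obtaining $\bigcup_i \Gmin(f_{D_i-D_0})=\Gamma$ first) and then intersects with $\Gmin(f_{D_0-E})$ to pass to $E$, whereas you apply the induction hypothesis directly with $E$ and argue the two inclusions separately; your version is arguably a bit more transparent, but the content is the same.
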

\begin{proof}
We prove by induction on the number of generators. Suppose the statements are true for all tropical convex hulls generated by $n$ divisors. Now consider a tropical convex hull $T$ generated by $n+1$ divisors $D_1,\ldots,D_{n+1}$. Let $T'=\tconv(D_1,\ldots,D_n)$ be a t-convex subset of $T$. For $E\in\RDivPlusD$, let $D_0$ be the $E$-reduced divisor in $T$. By Theorem~\ref{thm:construction}, there exists $D'_0\in T'$ such that $D_0\in\tconv(D'_0,D_n)$, which implies $\Gmin(f_{D'_0-D_0})\bigcup\Gmin(f_{D_n-D_0})=\Gamma$ by Lemma~\ref{lem:clutching}. By assumption, we have
$$\Gmin(f_{D'_0-D_0})\subseteq\bigcup_{i=1}^n\Gmin(f_{D_i-D_0}).$$ Thus, $\bigcup_{i=1}^{n+1}\Gmin(f_{D_i-D_0})=\Gamma$.

In addition, $\Gmin(f_{D_i-D_0})=\Gmin(f_{D_0-E})\bigcap\Gmin(f_{D_i-D_0})$. Therefore,
\begin{align*}
\Gmin(f_{D_0-E})&=\Gmin(f_{D_0-E})\bigcap(\bigcup_{i=1}^{n+1}\Gmin(f_{D_i-D_0})) \\
&= \bigcup_{i=1}^{n+1}(\Gmin(f_{D_0-E})\bigcap\Gmin(f_{D_i-D_0})) \\
&= \bigcup_{i=1}^{n+1}\Gmin(f_{D_i-E}).
\end{align*}
And this also implies $E\in T$ if and only if $\bigcup_{i=1}^{n+1}\Gmin(f_{D_i-E})=\Gamma$.
\end{proof}


Let $T$ be a tropical convex set. For $D\in T$, if $D\notin \tconv(T\setminus\{D\})$, (note that equivalently this means $T\setminus\{D\}$ is also tropically convex) then we say $D$ is an \emph{extremal} of $T$. It is clear from definition that any generating set of $T$ must contain all the extremals of $T$.

\begin{thm} \label{thm:extremal}
Every finitely generated tropical convex hull $T$ contains finitely many extremals. The set $S$ of all extremals of $T$ generates $T$ and is minimal among all generating sets of $T$.
\end{thm}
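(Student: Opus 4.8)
The plan is to derive all three assertions from a single structural fact: for the given generating set $\{D_1,\dots,D_n\}$, a generator $D_j$ is an extremal of $T$ if and only if $D_j\notin\tconv(\{D_i:i\neq j\})$. Before proving this I would record the elementary observation that \emph{every} generating set of $T$ contains \emph{all} extremals: if $D$ is an extremal and $G$ is a generating set with $D\notin G$, then $G\subseteq T\setminus\{D\}$ forces $D\in T=\tconv(G)\subseteq\tconv(T\setminus\{D\})$, contradicting extremality. Applied to $\{D_1,\dots,D_n\}$ this already shows the extremal set $S$ satisfies $S\subseteq\{D_1,\dots,D_n\}$, so $T$ has at most $n$ extremals (finiteness), and it reduces minimality to showing that $S$ itself generates $T$.

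For the characterization, the implication $D_j\in\tconv(\{D_i:i\neq j\})\Rightarrow D_j$ non-extremal is immediate since $\{D_i:i\neq j\}\subseteq T\setminus\{D_j\}$; the content is the converse, which I would prove by contraposition. Assume $D_j\notin\tconv(\{D_i:i\neq j\})$ and suppose, for contradiction, that $D_j\in\tconv(T\setminus\{D_j\})$. I first invoke the general fact that $\tconv(A)=\bigcup\{\tconv(F):F\subseteq A\text{ finite}\}$ (the right-hand side is t-convex, since $\tconv(D',D'')\subseteq\tconv(F_1\cup F_2)$ whenever $D'\in\tconv(F_1)$ and $D''\in\tconv(F_2)$). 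Hence there is a finite $F=\{G_1,\dots,G_m\}\subseteq T\setminus\{D_j\}$ with $D_j\in\tconv(F)$, and by the finite criterion (Theorem~\ref{thm:finite_criterion}) this reads $\bigcup_{k=1}^m\Gmin(f_{G_k-D_j})=\Gamma$.

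The crux is to push each $\Gmin(f_{G_k-D_j})$ back onto the generators other than $D_j$. Setting $T''=\tconv(\{D_i:i\neq j\})$, which is compact by Corollary~\ref{cor:finite_gen}, Theorem~\ref{thm:construction} gives $T=\tconv(T'',D_j)=\bigcup_{D'\in T''}\tconv(D',D_j)$, so each $G_k$ lies on a t-segment $\tconv(D'_k,D_j)$ with $D'_k\in T''$; as $D_j\notin T''$ we have $D'_k\neq D_j$, and since $G_k\neq D_j$ the point $G_k$ is at parameter $t_k\in[0,1)$ on the t-path from $D'_k$ to $D_j$. Remark~\ref{rmk:Gminmax}~(4), applied with $D_1=D'_k$ and $D_2=D_j$, then yields $\Gmax(f_{D_j-G_k})=\Gmax(f_{D_j-D'_k})$, i.e. $\Gmin(f_{G_k-D_j})=\Gmin(f_{D'_k-D_j})$. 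Because $D'_k\in T''$, the ``furthermore'' part of Theorem~\ref{thm:finite_criterion} applied to $T''$ with respect to $E=D_j$ gives $\Gmin(f_{D'_k-D_j})\subseteq\bigcup_{i\neq j}\Gmin(f_{D_i-D_j})$. Combining, $\Gamma=\bigcup_k\Gmin(f_{G_k-D_j})\subseteq\bigcup_{i\neq j}\Gmin(f_{D_i-D_j})$, and a final appeal to Theorem~\ref{thm:finite_criterion} shows $D_j\in T''$, the desired contradiction. I expect this reduction to be the main obstacle: the invariance $\Gmin(f_{G_k-D_j})=\Gmin(f_{D'_k-D_j})$ is what lets one replace an arbitrary point of $T$ by a point of the \emph{finitely generated} $T''$, taming the infinite set $T\setminus\{D_j\}$, and one must carefully keep the trivial term $\Gmin(f_{D_j-D_j})=\Gamma$ out of the unions — which is precisely what phrasing the criterion through $\{D_i:i\neq j\}$ achieves.

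With the characterization in hand I would finish by pruning: repeatedly discard any generator lying in the tropical convex hull of the remaining ones; each removal leaves the hull unchanged, and the process terminates, ending at a finite generating set $G^{\ast}$ no element of which is redundant. Since Theorem~\ref{thm:finite_criterion} applies to any finite generating set, the characterization is valid for $G^{\ast}$, so every element of $G^{\ast}$ is an extremal and $G^{\ast}\subseteq S$; as $G^{\ast}$ generates $T$, so does $S\supseteq G^{\ast}$. Conversely $S$ is contained in every generating set by the opening observation, whence $G^{\ast}=S$ and $S$ is at once a generating set and a subset of every generating set — the unique minimal one. Combined with the finiteness already established, this proves the theorem.
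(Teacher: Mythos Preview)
Your proof is correct and follows essentially the same route as the paper's: both hinge on the invariance $\Gmin(f_{G-D_j})=\Gmin(f_{D'-D_j})$ along a t-segment $\tconv(D_j,D')$ (Remark~\ref{rmk:Gminmax}), combined with the inclusion $\Gmin(f_{D'-D_j})\subseteq\bigcup_{i\neq j}\Gmin(f_{D_i-D_j})$ from Theorem~\ref{thm:finite_criterion} and the decomposition $T=\bigcup_{D'\in T''}\tconv(D_j,D')$ from Theorem~\ref{thm:construction}. The only structural difference is packaging: the paper picks a t-convex independent subset up front and shows directly that for any $E_1,E_2\in T\setminus\{D_0\}$ one has $D_0\notin\tconv(E_1,E_2)$, whereas you argue by contradiction and pass through a finite witness $F\subseteq T\setminus\{D_j\}$ via $\tconv(A)=\bigcup_{F\text{ finite}}\tconv(F)$; this extra step is unnecessary (the paper's two-point argument already suffices) but does no harm.
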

\begin{proof}
Let $S'$ be a finite generating set of $T$, i.e., $\tconv(S')=T$. We may choose a subset $S$ of $S'$ such that $\tconv(S)=T$ and $S$ is t-convex independent. (The uniqueness of the choice of $S$, which follows from the assertion in the theorem, is not required now.) We claim $S$ is the set of all extremals of $T$, which also implies the minimality of $S$.

Let $S=\{D_0,D_1,D_2,\ldots,D_n\}$ and $T=\tconv(D_1,\ldots,D_n)$. Since $S$ is t-convex independent, we must have $D_0\notin T'$, which implies $\bigcup_{i=1}^n\Gmin(f_{D_i-D_0})\neq\Gamma$ by Theorem~\ref{thm:finite_criterion}. It suffices to show that $D_0$ is an extremal of $T$, i.e., $T\setminus\{D_0\}$ is tropically convex. Choose arbitrarily $E_1$ and $E_2$ in $T\setminus\{D_0\}$. According to Theorem~\ref{thm:construction}, there exist $F_1$ and $F_2$ in $T'$ such that $E_1\in\tconv(D_0,F_1)$ and $E_2\in\tconv(D_0,F_2)$. Note that it follows $\Gmin(f_{E_1-D_0})=\Gmin(f_{F_1-D_0})$ and $\Gmin(f_{E_2-D_0})=\Gmin(f_{F_2-D_0})$. By Theorem~\ref{thm:finite_criterion}, we have
$$\Gmin(f_{F_1-D_0})\subseteq\bigcup_{i=1}^n\Gmin(f_{D_i-D_0})$$ and $$\Gmin(f_{F_2-D_0})\subseteq\bigcup_{i=1}^n\Gmin(f_{D_i-D_0}).$$
Then,
\begin{align*}
\Gmin(f_{E_1-D_0})\bigcup\Gmin(f_{E_2-D_0}) = \Gmin(f_{F_1-D_0})\bigcup\Gmin(f_{F_2-D_0})\subseteq\bigcup_{i=1}^n\Gmin(f_{D_i-D_0})\neq\Gamma,
\end{align*}
which implies $D_0\notin\tconv(E_1,E_2)$. Therefore, $T\setminus\{D_0\}$ is tropically convex as claimed.
\end{proof}

\section{Canonical projections} \label{S:Canproj}
The existence and uniqueness of a reduced divisor in a compact tropical convex set $T$ with respect to an effective $\mR$-divisor of the same degree enable us to define a projection map to $T$.
\begin{dfn} \label{def:RDM}
For a compact tropical convex set $T$ of degree $d$, the \emph{canonical projection} to $T$, $\gamma^T:\RDivPlus\rightarrow T$, is given by sending $E$ to the $E'$-reduced divisor $T_{E'}$ in $T$ where $E'=\frac{d}{\mathrm{deg}E}E$.
\end{dfn}

\begin{lem} \label{lem:RDM_continuous}
Restricted to degree $d$, a reduced-divisor map $\gamma^T|_{\RDivPlusD}$ is continuous.
\end{lem}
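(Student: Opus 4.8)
The plan is to reduce the statement directly to Lemma~\ref{lem:dist_decrease}. First I would observe that on $\RDivPlusD(\Gamma)$ the scaling factor $\frac{d}{\deg E}$ equals $1$, so that $E'=E$ and hence $\gamma^T|_{\RDivPlusD}(E)=T_E$ is simply the $E$-reduced divisor in $T$. Thus the continuity of $\gamma^T|_{\RDivPlusD}$ is nothing more than the assertion that the map $E\mapsto T_E$ from $\RDivPlusD(\Gamma)$ to $T$ is continuous, where both spaces carry the metric $\rho$.

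The main tool is Lemma~\ref{lem:dist_decrease}, which asserts $\rho(T_{E_1},T_{E_2})\leqslant\rho(E_1,E_2)$ for all $E_1,E_2\in\RDivPlusD(\Gamma)$. This says precisely that $E\mapsto T_E$ is $1$-Lipschitz (non-expansive) with respect to $\rho$. Since a non-expansive self-map between metric spaces is automatically uniformly continuous, the result follows: given $\varepsilon>0$, the choice $\delta=\varepsilon$ guarantees that $\rho(E_1,E_2)<\delta$ implies $\rho(T_{E_1},T_{E_2})<\varepsilon$. Hence $\gamma^T|_{\RDivPlusD}$ is continuous (indeed uniformly so, and in fact $1$-Lipschitz).

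There is essentially no obstacle at this stage, because the genuine content — the contraction estimate for the reduced-divisor map — has already been extracted in Lemma~\ref{lem:dist_decrease}, which itself rests on the triangle inequality for $\Sfunc$-functions (Lemma~\ref{lem:Sfunc}) together with the criterions for general reduced divisors (Corollary~\ref{cor:reduced}). The only point requiring care is the bookkeeping that identifies $\gamma^T|_{\RDivPlusD}$ with the map $E\mapsto T_E$, and this is immediate from Definition~\ref{def:RDM} once one notes that the rescaling is trivial in the degree-$d$ stratum.
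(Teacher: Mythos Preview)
Your proposal is correct and matches the paper's own argument: the paper's proof is the single sentence ``This is an immediate corollary of Lemma~\ref{lem:dist_decrease} and Lemma~\ref{lem:rescale},'' and your write-up simply unpacks this, correctly observing that on $\RDivPlusD(\Gamma)$ the rescaling in Definition~\ref{def:RDM} is trivial so that only the $1$-Lipschitz estimate of Lemma~\ref{lem:dist_decrease} is needed.
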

\begin{proof}
This is an immediate corollary of Lemma~\ref{lem:dist_decrease} and Lemma~\ref{lem:rescale}.
\end{proof}

\begin{rmk}
For a complete linear system $|D|$, Omini \cite{A12} defined the reduced-divisor map: $\mathrm{Red}:\Gamma\rightarrow|D|$ by sending a point $q\in\Gamma$ to the (conventional) reduced divisor $D_q\in|D|$. In our setting, the map $\mathrm{Red}$ is precisely $\gamma^{|D|}|_{\Div^1_+}$.
\end{rmk}

Let us recall some basic topological notions of retractions and retracts. If $Y$ is a subspace of a topological space $X$, then a \emph{retraction} of $X$ onto $Y$ is a continuous surjection $r:X \twoheadrightarrow Y$ such that $r|_Y=id_Y$. A \emph{deformation retraction} of $X$ onto $Y$ is a homotopy between the identity map of $X$ and a retraction of $X$ onto $Y$, or more explicitly, a continuous map $h:[0,1]\times X \rightarrow X$ such that for all $x\in X$ and $y\in Y$, $h(0,x)=x$, $h(1,x)\in Y$, and $h(1,y)=y$. If in addition $h(t,y)=y$ for all $t\in[0,1]$ and $y\in Y$, then $h$ is called a \emph{strong deformation retraction}. With respect to the existence of a retraction, a deformation retraction or a strong deformation retraction of $X$ onto $Y$, we say $Y$ is a \emph{retract}, a \emph{deformation retract} or a \emph{strong deformation retract} of $X$.

Now let $T\subseteq\RDivPlusD$ be a compact tropical convex set. We know that the canonical projection $\gamma^T|_{\RDivPlusD}$ is continuous (Lemma~\ref{lem:RDM_continuous}) and $\gamma^T|_T=id_T$ (Lemma~\ref{lem:identity}). Therefore, $T$ is a retract of $\RDivPlusD$ with $\gamma^T|_{\RDivPlusD}$ the retraction. In addition, we can use the reduced-divisor map to construct a strong deformation retraction on $T$.

\begin{dfn}
Let $X\subseteq\RDivPlusD$ be tropically convex. Let $T\subseteq X$ be a compact tropical convex subset of $X$. Then we say a strong deformation retraction $h:[0,1]\times X \rightarrow X$ of $X$ onto $T$ is a \emph{tropical retraction} if at each $t\in[0,1]$, the set $h(t,X)$ is tropically convex. In this sense, we say $T$ is a \emph{tropical retract} of $X$.
\end{dfn}

\begin{thm}
For each compact tropical convex subset $T$ of a tropical convex set $X\subseteq\RDivPlusD$, there exists a tropical retraction of $X$ onto $T$.
\end{thm}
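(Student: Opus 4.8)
The plan is to mimic the contraction map built in the proof of Theorem~\ref{thm:contractible}, but to replace the single target point $D_0$ by the canonical projection $\gamma^{T}$ (Definition~\ref{def:RDM}). Write $F(D):=\gamma^{T}(D)\in T$, the $D$-reduced divisor of $T$; by Lemma~\ref{lem:RDM_continuous} the map $F$ is continuous, by Lemma~\ref{lem:identity} it fixes $T$, and by Lemma~\ref{lem:red_fiber} every divisor on $\tconv(D,F(D))$ has the same projection $F(D)$. Set $g(D):=\rho(D,F(D))=\dist(D,T)$; this is continuous, and the previous sentence shows that along each segment $\tconv(D,F(D))$ the value of $g$ grows isometrically from $0$ at $F(D)$ to $g(D)$ at $D$. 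Since $\RDivPlusD(\Gamma)$ has finite diameter (at most $d\cdot\max_{p,q\in\Gamma}\dist(p,q)$, by the degree-splitting inequality for $\rho$ together with $r(p,q)\le\dist(p,q)$), the quantity $\kappa:=\sup_{D\in X}g(D)$ is finite. I then define $h:[0,1]\times X\to X$ by letting $h(t,D)$ be the divisor on $\tconv(D,F(D))$ at distance $\kappa(1-t)$ from $F(D)$ when $g(D)>\kappa(1-t)$, and $h(t,D)=D$ otherwise. As $F(D)\in T\subseteq X$ and $X$ is tropically convex, $\tconv(D,F(D))\subseteq X$, so $h$ maps into $X$; moreover $h(0,\cdot)=\mathrm{id}$, $h(1,D)=F(D)\in T$, and $h(t,D)=D$ for all $t$ whenever $D\in T$ (then $g(D)=0$ and $F(D)=D$). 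Thus $h$ is a strong deformation retraction of $X$ onto $T$ as soon as it is continuous.

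Next I would check that every slice $h(t,X)$ is tropically convex. By construction the points with $g\le\kappa(1-t)$ are fixed and the rest are pushed to $T$-distance exactly $\kappa(1-t)$, so $h(t,X)=\{D\in X:\,g(D)\le\kappa(1-t)\}$ is a sublevel set of the distance-to-$T$ function $g$. It therefore suffices to prove that $g$ is tropically quasi-convex, i.e.\ $g(E)\le\max(g(D_1),g(D_2))$ for every $E\in\tconv(D_1,D_2)$. Writing $F_i=F(D_i)$ and letting $F^{\ast}$ be the $E$-reduced divisor in $\tconv(F_1,F_2)\subseteq T$, we have $g(E)\le\rho(E,F^{\ast})$ because $F(E)$ realizes the minimal distance from $E$ to $T$ (Proposition~\ref{prop:levelset_pt}(1)). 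Applying Corollary~\ref{cor:red_dist_ineq} with its two t-segments chosen to be $\tconv(F_1,F_2)$ and $\tconv(D_1,D_2)$ and its distinguished reduced divisor taken to be $F^{\ast}$ --- exactly as Corollary~\ref{cor:red_dist_ineq} is used in the compactness half of Theorem~\ref{thm:construction} --- gives $\rho(E,F^{\ast})\le\max(\rho(F_1,D_1),\rho(F_2,D_2))=\max(g(D_1),g(D_2))$. Hence the sublevel sets of $g$, and in particular all slices $h(t,X)$, are tropically convex.

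The remaining and genuinely hardest step is the continuity of $h$. As in the four-case argument of Theorem~\ref{thm:contractible} I would separate the time variable through $\rho(h(t_n,D),h(t,D))\le\kappa|t_n-t|$ and, with $t$ fixed, estimate $\rho(h(t,D_1),h(t,D_2))$ for nearby $D_1,D_2$. Conceptually $h(t,\cdot)$ is the radial retraction of $X$ onto the tropically convex sublevel set $L:=h(t,X)$, which one expects to be distance-decreasing just as canonical projections are (Lemma~\ref{lem:dist_decrease}); the point is to realize this with the available tools. I would do so by applying Proposition~\ref{prop:dist_ineq} to the two t-segments $\tconv(F_1,D_1)$ and $\tconv(F_2,D_2)$ carrying $E_1:=h(t,D_1)$ and $E_2:=h(t,D_2)$. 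Here the crucial observation is that, since every point of $\tconv(F_i,D_i)$ projects to $F_i$ and has $T$-distance equal to its distance from $F_i$, the part of the segment beyond $E_i$ sits at $T$-distance $>\kappa(1-t)$, whereas the other divisor $E_{3-i}$ sits at $T$-distance $\le\kappa(1-t)$; the intended key lemma is that this pins the $E_{3-i}$-reduced divisor of $\tconv(F_i,D_i)$ into $\tconv(F_i,E_i)$, which is precisely the sub-segment hypothesis of Proposition~\ref{prop:dist_ineq}. That proposition then yields $\rho(E_1,E_2)\le\rho(D_1,D_2)$, and joint continuity of $h$ follows. Continuity of $\gamma^{T}$ (Lemma~\ref{lem:RDM_continuous}) and the fact that $\gamma^{T}$ is $1$-Lipschitz (Lemma~\ref{lem:dist_decrease}) keep the auxiliary projections under control as $D_1,D_2$ merge.

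I expect essentially all the difficulty to lie in the previous paragraph: verifying the sub-segment containment hypotheses of Proposition~\ref{prop:dist_ineq} in full generality, so that the clean bound $\rho(E_1,E_2)\le\rho(D_1,D_2)$ is actually licensed, and carefully treating the degenerate configurations --- frozen points where $E_i=D_i$, points of $T$ where the segment $\tconv(D,F(D))$ collapses, and coincidences $F_1=F_2$ or $D_1=D_2$. Once continuity is established, $h$ is a strong deformation retraction of $X$ onto $T$ whose every slice $h(t,X)$ is tropically convex, i.e.\ a tropical retraction, which completes the proof.
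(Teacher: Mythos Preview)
Your overall strategy coincides with the paper's: the same map $h$ built from the canonical projection $\gamma^{T}$, the same identification of $h(t,X)$ as the sublevel set $\{D\in X:\rho_T(D)\le\kappa(1-t)\}$, and the same use of Corollary~\ref{cor:red_dist_ineq} to show those sublevel sets are tropically convex. That part is correct.

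The gap is in the continuity step. Your ``intended key lemma'' asserts that the $E_{3-i}$-reduced divisor of $\tconv(F_i,D_i)$ must fall into $\tconv(F_i,E_i)$, on the grounds that points beyond $E_i$ have $T$-distance $>\kappa(1-t)$ while $E_{3-i}$ has $T$-distance $\le\kappa(1-t)$. But the $E_{3-i}$-reduced divisor minimizes $\Sfunc_{\star-E_{3-i}}$, not $\rho_T$; there is no principle forcing its $T$-distance to be bounded by $\rho_T(E_{3-i})$. In fact this containment can fail: translated to the paper's notation (with $D'' $ the $D'_n$-reduced divisor in $\tconv(C,D)$ and $D''_n$ the $D'$-reduced divisor in $\tconv(C_n,D_n)$), the configurations $D''\in\tconv(D',D)$ or $D''_n\in\tconv(D'_n,D_n)$ do occur, so Proposition~\ref{prop:dist_ineq} is not directly applicable and the clean bound $\rho(E_1,E_2)\le\rho(D_1,D_2)$ is not obtained.

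The paper's remedy is exactly the case analysis you were hoping to avoid. When both reduced divisors sit on the $F$-side (Case~4a) or both on the $D$-side (Case~4b), Proposition~\ref{prop:dist_ineq} applies and gives the $1$-Lipschitz bound. In the mixed cases (4c, 4d), and in the analogous subcase~(2b) where one point is frozen, one instead splits $\rho(E_1,E_2)$ through the stray reduced divisor and combines Corollary~\ref{cor:red_dist_ineq} with a direct length estimate along the segment (using Lemma~\ref{lem:dist_decrease} to compare $\rho(D'',C'')$ and $\rho(C'',C)$ to known quantities). This yields only $\rho(h(t,D_n),h(t,D))\le 2\,\rho(D_n,D)$, which is still enough for continuity. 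So your outline is right up to the point where you invoke the key lemma; from there you need the paper's four-case (plus subcase) analysis and must accept the factor $2$.
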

\begin{proof}
Our proof will be very similar to the proof of Theorem~\ref{thm:contractible}. We will explicitly construct such a tropical retraction $h:[0,1]\times X \rightarrow X$. In particular, for each $D\in W$, we want $h(0,D)=D$ and $h(1,D)=\gamma^T(D)$.

Let $\rho_T(D):=\min_{D'\in T}\rho(D,D')$. Note that $\rho(D,\gamma^T(D)) = \rho_T(D)$ (Proposition~\ref{prop:levelset_pt}~(1)). Let $\kappa=\sup_{D\in X}\rho_T(D)$. We define $h$ in the following way. For any $D\in W$, we let $h(t,D)=D$ if $t\in[0,1-\frac{\rho_T(D)}{\kappa})$, and $h(t,D)=P_{\gamma^T(D)-D}(\frac{\kappa}{\rho_T(D)}(t-1)+1)$ if $t\in[1-\frac{\rho_T(D)}{\kappa},1]$. In other words, if $\rho_T(D)<\kappa (1-t)$, then $h(t,D)=D$, and otherwise, $h(t,D)$ lies on the t-segment $\tconv(D,\gamma^T(D))$ with distance $\kappa (1-t)$ to $\gamma^T(D)$. It can be easily verified that $h(0,D) = D$ and $h(1,D) = \gamma^T(D)$. In addition, if $D\in T$, then $h(t,D)=D=\gamma^T(D)$ for all $t\in[0,1]$. Now, to show $h$ is actually a tropical retraction of $X$ onto $T$, it remains to show that $h$ is continuous, and $h(t,X)$ is tropically convex for all $t\in[0,1]$.

To say $h$ is continuous is equivalent to say $h(t_n,D_n)\rightarrow h(t,D)$ whenever $t_n\rightarrow t$ and $D_n\rightarrow D$.  We have $$\rho(h(t_n,D_n),h(t,D))\leqslant\rho(h(t_n,D_n),h(t,D_n))+\rho(h(t,D_n),h(t,D)),$$
and $$\rho(h(t_n,D_n),h(t,D_n))\leqslant\rho(D_0,D_n)|t_n-t|\leqslant\kappa|t_n-t|.$$
In stead of proving $\rho(h(t,D_n),h(t,D))\leqslant\rho(D_n,D)$ as in the proof of Theorem~\ref{thm:contractible}, here we claim that $\rho(h(t,D_n),h(t,D))$ is bounded by $2\cdot\rho(D_n,D)$, which is still sufficient to guarantee the continuity of $h$.

Let $h(t,D_n)=D'_n$ and $h(t,D)=D'$. Note that $\gamma^T(D_n)=\gamma^T(D'_n)$ and $\gamma^T(D)=\gamma^T(D')$ (Lemma~\ref{lem:red_fiber}). Denote these reduced divisors by $C_n$ and $C$ respectively. Also, we note that $\rho(D_n,D)\geqslant\rho(C_n,C)$ (Lemma~\ref{lem:dist_decrease}).

Case (1): $\rho_T(D_n)<\kappa (1-t)$ and $\rho_T(D)<\kappa (1-t)$. Then $D'_n=D_n$ and $D'=D$. We automatically have $\rho(D'_n,D')=\rho(D_n,D)$.

Case (2): $\rho_T(D_n)<\kappa (1-t)$ and $\rho_T(D)\geqslant \kappa (1-t)$.  Then $D'_n=D_n$ and $$\rho(D_n,C_n)=\rho_T(D_n)<\rho(D',C)=\rho_T(D')=\kappa (1-t).$$
Let $D''\in\tconv(C,D)$ be the $D_n$-reduced divisor in $\tconv(C,D)$. Depending on the relative positions of $D'$ and $D''$ in $\tconv(C,D)$, there are two subcases.

Subcase (2a): $D''\in\tconv(C,D')$. By Proposition~\ref{prop:dist_ineq}, we have $$\rho(D'_n,D')=\rho(D_n,D')\leqslant\rho(D_n,D).$$

Subcase (2b): $D'\in\tconv(C,D'')$. Now let $C''$ be the $C_n$-reduced divisor in $\tconv(C,D)$. Then we have
$\rho(C'',C)\leqslant\rho(C_n,C)$ and $\rho(D'',C'')\leqslant\rho(D_n,C_n)$ (Lemma~\ref{lem:dist_decrease}). Also, we note that $\rho(D_n,C_n)<\rho(D',C)=\kappa (1-t)$.
Therefore,
\begin{align*}
\rho(D'',D') &=\rho(D'',C)-\rho(D',C)\leqslant(\rho(D'',C'')+\rho(C'',C))-\rho(D',C) \\
&\leqslant(\rho(D_n,C_n)+\rho(C_n,C))-\rho(D',C)<\rho(C_n,C)\leqslant\rho(D_n,D).
\end{align*}
Moreover, by Corollary~\ref{cor:red_dist_ineq}, we have $$\rho(D_n,D'')\leqslant\max(\rho(C_n,C),\rho(D_n,D))=\rho(D_n,D).$$
It follows $$\rho(D'_n,D')=\rho(D_n,D')\leqslant\rho(D_n,D'')+\rho(D'',D')<2\cdot\rho(D_n,D).$$

Case (3): $\rho_T(D_n)\geqslant\kappa (1-t)$ and $\rho_T(D)<\kappa (1-t)$. Then $D'=D$ and $\rho_T(D)<\rho_T(D'_n)=\kappa (1-t)$. Exchanging the roles of $D_n$ and $D$, we may analyze this case in the same way as in Case~(2), and conclude that $\rho(D'_n,D')<2\cdot\rho(D_n,D)$ in general.

Case (4): $\rho_T(D_n)\geqslant\kappa (1-t)$ and $\rho_T(D)\geqslant\kappa (1-t)$. In this case,
$$\rho(D'_n,C_n)=\rho_T(D_n)=\rho(D',C)=\rho_T(D')=\kappa (1-t).$$
Let $D''\in\tconv(C,D)$ be the $D'_n$-reduced divisor in $\tconv(C,D)$ and $D''_n\in\tconv(C_n,D_n)$ be the $D'$-reduced divisor in $\tconv(C_n,D_n)$. We need to consider the relative the positions of $D'$ and $D''$ in $\tconv(C,D)$ and the relative positions of $D'_n$ and $D''_n$ in $\tconv(C_n,D_n)$.

Case (4a): $D''\in\tconv(C,D')$ and $D''_n\in\tconv(C_n,D'_n)$. Then we can apply Proposition~\ref{prop:dist_ineq} and see that $\rho(D'_n,D')\leqslant\rho(D_n,D)$.

Case (4b): $D''\in\tconv(D',D)$ and $D''_n\in\tconv(D'_n,D_n)$. Again we can apply Proposition~\ref{prop:dist_ineq} and get $\rho(D'_n,D')\leqslant\rho(C_n,C)\leqslant\rho(D_n,D)$.

Case (4c): $D''\in\tconv(D',D)$ and $D''_n\in\tconv(C_n,D'_n)$. We can use an analogous analysis as in Case~(2b) and get $$\rho(D'_n,D')\leqslant\rho(D'_n,D'')+\rho(D'',D')\leqslant2\cdot\rho(D_n,D).$$ Note that we get $``\leqslant''$ instead of $``<''$ as in Case~(2b) because we now have $\rho(D'_n,C_n)=\rho(D',C)=\kappa (1-t)$.

Case (4d): $D''\in\tconv(C,D')$ and $D''_n\in\tconv(D'_n,D_n)$. Base on a similar analysis as in Case~(4c), we get
$$\rho(D'_n,D')\leqslant\rho(D'_n,D''_n)+\rho(D''_n,D')\leqslant2\cdot\rho(D_n,D).$$

So far we've finished the proof of the continuity of $h$. To show $h(t,X)$ is tropically convex, we note that $h(t,T)$ is the sublevel set $L_{\leqslant r}^{T}(\rho_T)$ of the distance function $\rho_T$ where $r=\kappa (1-t)$. Hence we only need to show that choosing arbitrarily $D_1$ and $D_2$ from $X$ such that $\rho_T(D_1)\leqslant r$ and $\rho_T(D_2)\leqslant r$, we must have $\rho_T(D)\leqslant r$ for every $D\in\tconv(D_1,D_2)$. Let $C_1$ and $C_2$ the reduced divisors in $T$ with respect to $D_1$, $D_2$ respectively. Let $C$ be the $D$-reduced divisor in $\tconv(C_1,C_2)$. Then by Corollary~\ref{cor:red_dist_ineq}, we must have
$$\rho_T(D)\leqslant\rho(D,C)\leqslant\max(\rho(D_1,C_1),\rho(D_2,C_2))=\max(\rho_T(D_1),\rho_T(D_2))\leqslant r.$$
\end{proof}



\begin{thebibliography}{99}
\bibitem{A12}
Omid~Amini.
\newblock Reduced divisors and embeddings of tropical curves.
\newblock To appear in \emph{Trans. Amer. Math. Soc.},
\newblock \url{http://arxiv.org/abs/1007.5364}, 2012

\bibitem{AC13}
Omid~Amini and Lucia Caporaso.
\newblock Riemann¨CRoch theory for weighted graphs and tropical curves.
\newblock \emph{Advances in Mathematics} {\bf 240,} 1--23 (2013).

\bibitem{AM10}
Omid Amini and Madhusudan Manjunath.
\newblock Riemann-roch for sub-lattice of the root lattice $A_n$.
\newblock \emph{An. Electr. J. Comb.} {\bf 17,} (2010).


\bibitem{BF12}
Matthew~Baker and Farbod Shokrieh,
\newblock Chip-firing games, potential theory on graphs, and spanning trees. 
\newblock \emph{Journal of Combinatorial Theory, Series A} {\bf 120,} 164--182 (2013).

\bibitem{BN07}
Matthew~Baker and Serguei~Norine.
\newblock Riemann-Roch and Abel-Jacobi theory on a finite graph.
\newblock \emph{Advances in Mathematics}, {\bf 215,} 766--788 (2007).

\bibitem{BF06}
Matthew~Baker and Xander~Faber.
\newblock Metrized graphs, Laplacian operators, and electrical networks.
\newblock In \emph{Quantum Graphs and their Applications}, \emph{Contemporary Mathematics}, {\bf 415,}  15--33. Amer. Math. Soc., Providence, RI, 2006.

\bibitem{BR07}
Matthew~Baker and Robert~Rumely.
\newblock Harmonic analysis on metrized graphs.
\newblock \emph{Cand. J. Math.}, {\bf 59,} 225--275, 2007.

\bibitem{CDPR10}
Filip~Cools, Jan~Draisma, Sam~Payne, and Elina~Robeva.
\newblock A tropical proof of the Brill-Noether Theorem.
\newblock \emph{Advances in Mathematics}, {\bf 230,} 759--776 (2012).


\bibitem{D90}
Deepak~Dhar.
\newblock Self-organised critical state of the sandpile automaton models.
\newblock \emph{Physical Review Lettters} {\bf 64,} 1613--1616 (1990).

\bibitem{DS04}
Mike~Develin and Bernd~Sturmfels.
\newblock Tropical convexity.
\newblock \emph{Documenta Math.}, {\bf 9,} 1--27 (2004).

\bibitem{GK07}
Andreas~Gathmann and Michael~Kerber.
\newblock A Riemann-Roch theorem in tropical geometry.
\newblock \emph{Mathematische Zeitschrift}, {\bf 259,} 217--230, (2007).

\bibitem{HKN07}
Jan~Hladk\'{y}, Daniel~Kr\'{a}l' and Serguei~Norine.
\newblock Rank of divisors on tropical curves.
\newblock \url{http://arxiv.org/abs/0709.4485v2}, (2008).

\bibitem{HMY12}
Christian~Haase, Gregg~Musiker and Josephine~Yu.
\newblock Linear systems on tropical curves.
\newblock \emph{Mathematische Zeitschrift}, {\bf 270,} 1111--1140 (2012).

\bibitem{JM12}
Rodney~James and Rick~Miranda.
\newblock Riemann-Roch theory on finite sets.
\newblock \url{http://arxiv.org/abs/1202.0247}, 2012.

\bibitem{L11}
Ye~Luo.
\newblock Rank-determining sets of metric graphs.
\newblock \emph{Journal of Combinatorial Theory, Series A}, {\bf 118,} 1775--1793 (2011).

\bibitem{LP10}
Lionel~Levine and James~Propp.
\newblock What is a sandpile?
\newblock \emph{Notices Amer. Math. Soc.}, {\bf 57,} 976--979, (2010).


\bibitem{MZ07}
Grigory~Mikhalkin and Ilia~Zharkov.
\newblock Tropical curves, their Jacobians and theta functions.
\newblock  Curves and abelian varieties,  203--230,
\emph{Contemp. Math.}, {\bf 465,} Amer. Math. Soc., Providence, RI, 2008.

\bibitem{PS04}
Alexander~Postnikov and Boris~Shapiro.
\newblock Trees, parking functions, syzygies, and deformations of monomial ideals.
\newblock \emph{Trans. Amer. Math. Soc.} {\bf 356,} 3109--3142 (2004).

\end{thebibliography}
\end{document}